\documentclass[10pt]{amsart}

%ams standard packages
\usepackage{amsmath,amssymb,amsthm,bm}
\usepackage{enumitem}
\usepackage{verbatim}
\usepackage[utf8]{inputenc}
\usepackage{tikz}
\usetikzlibrary{matrix,arrows}

\usepackage{mathrsfs}

\allowdisplaybreaks

%set margins
%\usepackage[top=3cm,bottom=3cm,left=3cm,right=3cm]{geometry}
%\parindent=0pt

%set linespread
\usepackage{setspace}
\linespread{1}

%theorem styles
\theoremstyle{plain}
\newtheorem{theorem}{Theorem}
\newtheorem*{theoremnoname}{Theorem}
\newtheorem{proposition}[theorem]{Proposition}
\newtheorem{lemma}[theorem]{Lemma}
\newtheorem{corollary}[theorem]{Corollary}

\theoremstyle{definition}
\newtheorem{definition}[theorem]{Definition}
\newtheorem{example}[theorem]{Example}
\newtheorem{remark}[theorem]{Remark}
\newtheorem*{notation}{Notation}

\renewcommand{\a}{\mathbf{a}}
\newcommand{\A}{\mathcal{A}}
\renewcommand{\b}{\mathbf{b}}

\renewcommand{\c}{\mathbf{c}}
\newcommand{\bH}{{\mathbf{H}}}
\newcommand{\bHbar}{{\overline{\bH}}}

\newcommand{\eps}{\boldsymbol\varepsilon}
\newcommand{\f}{\mathbf{f}}
\newcommand{\fbar}{\bar{f}}
\newcommand{\g}{\mathbf{g}}
\newcommand{\gbar}{\bar{g}}

\newcommand{\J}{\mathrel{\mathscr{J}}}

\newcommand{\leqH}{\leq_{\sH}}
\newcommand{\leqL}{\leq_{\sL}}
\newcommand{\leqR}{\leq_{\sR}}

\renewcommand{\L}{\mathrel{\mathscr{L}}}
\newcommand{\onto}{\twoheadrightarrow}

\renewcommand{\phi}{\varphi}
\newcommand{\PL}{\mathrm{PL}}
\newcommand{\q}{\vec{q}}
\newcommand{\relto}{\mathrel{\ooalign{\hfil$\mapstochar\mkern5mu$\hfil\cr$\to$\cr}}}
\newcommand{\R}{\mathrel{\mathscr{R}}}
\newcommand{\sF}{\mathcal{F}}
\newcommand{\sFbar}{\overline{\sF}}
\newcommand{\sH}{\mathrel{\mathscr{H}}}
\newcommand{\sL}{\L}
\newcommand\sR{\R}
\newcommand{\Sat}{\mathrm{Sat}}
\newcommand{\St}{\mathrm{St}}
\newcommand{\SH}{S^\bH}
\newcommand{\SHbar}{\overline{S^\bH}}
\newcommand{\taubar}{\overline{\tau}}
\renewcommand{\th}{\mathrm{th}}
\renewcommand{\theta}{\vartheta}
\newcommand{\T}{\mathcal{T}}

\let\accentu\u     %to accommodate for ĭ in bibliography!
\renewcommand{\u}{\mathbf{u}}
\newcommand{\ubar}{\bar{u}}
\renewcommand{\vec}{\mathbf}
\newcommand{\V}{\mathbf{V}}

\newcommand{\X}{\mathscr{X}}

\numberwithin{equation}{section}
\numberwithin{theorem}{section}
%\usepackage{showlabels}

%document information
\title{Pointlike sets for varieties determined by groups}
\author{S.~J.~v. Gool and B.~Steinberg}
\address{Department of Mathematics\\    City College of New York\\Convent Avenue at 138th Street\\New York, New York 10031\\USA}
\email{samvangool@me.com}
\email{bsteinberg@ccny.cuny.edu}
\thanks{The first-named author was supported by the European Union's Horizon 2020 research and innovation programme under the Marie Sklodowska-Curie grant \#655941; the second-named author was supported by NSA MSP \#H98230-16-1-0047.}

\date{January 14, 2018}
\keywords{pointlikes, separation problem, varieties, semigroups}
\subjclass[2010]{20M07,20M35}

\begin{document}

\begin{abstract}
For a variety of finite groups $\bH$, let $\bHbar$ denote the variety of finite semigroups all of whose subgroups lie in $\bH$. We give a characterization of the subsets of a finite semigroup that are pointlike with respect to $\bHbar$. Our characterization is effective whenever $\bH$ has a decidable membership problem. In particular, the separation problem for $\bHbar$-languages is decidable for any decidable variety of finite  groups $\bH$.  This generalizes Henckell's theorem on decidability of aperiodic pointlikes.
\end{abstract}

\maketitle
\section{Introduction}
Motivated by Krohn-Rhodes decomposition theory~\cite{KR65,KR68} and formal language theory, Eilenberg~\cite{Eilenberg76} suggested the notion of a variety of finite semigroups (a class of finite semigroups closed under formation of finite direct products, subsemigroups and homomorphic images) as the fundamental organizing principle in finite semigroup theory and the algebraic theory of automata.  Moreover, he and Tilson emphasized the importance of relational morphisms (generalizing divisions in the sense of Krohn and Rhodes) as the appropriate arrows to consider between finite semigroups.  A relational morphism is essentially a multi-valued semigroup homomorphism: a relation $\varphi\colon S\relto T$ is a relational morphism if $s\varphi\neq \emptyset$  and $s\varphi s'\varphi\subseteq (ss')\varphi$ for all $s,s'\in S$.   A modern treatise on finite semigroup theory from the varietal viewpoint is the book of Rhodes and the second author~\cite{RS2009}.

A subset $X$ of a finite semigroup $S$ is $\mathbf V$-pointlike (with respect to a variety $\V$) if it relates to a point under all relational morphisms from $S$ into $\mathbf V$, that is, for all relational morphisms $\varphi\colon S\relto T$ with $T\in \mathbf V$, there exists $t\in T$ with $X\subseteq t\varphi^{-1}$. There is also a nice profinite reformulation of the notion (cf.~\cite{RS2009}). If $S$ is a finite semigroup generated by a finite set $A$, $\gamma\colon \widehat{A^+}\to S$ is the canonical projection from the free profinite semigroup on $A$ to $S$ and $\pi_{\mathbf V}\colon \widehat{A^+}\to \widehat{F}_{\mathbf V}(A)$ is the canonical projection to the free pro-$\mathbf V$ semigroup, then $X\subseteq S$ is $\mathbf V$-pointlike if and only if $X\subseteq t\pi_{\mathbf V}^{-1} \gamma$ for some $t\in \widehat{F}_{\mathbf V}(A)$.

A variety $\mathbf V$ is said to have decidable pointlikes if there is an algorithm which takes as input a finite semigroup and one of its subsets and outputs whether that subset is $\mathbf V$-pointlike.  It is easy to see that $S\in \mathbf V$ if and only if the $\mathbf V$-pointlike subsets of $S$ are the singletons.  Thus the $\mathbf V$-pointlikes problem is at least as hard as the membership problem for $\mathbf V$.  Rhodes and the second author~\cite{RS99} produced the first example of a variety with decidable membership but undecidable pointlikes.  Later, Auinger and the second author~\cite{AS03} gave an example of a variety of finite metabelian groups with decidable membership problem but undecidable pointlikes.  In both constructions, it is undecidable if a pair of elements is pointlikes.  It is obvious that pointlike sets are decidable for any locally finite variety with computable free objects.

Pointlike sets were considered by Rhodes and his students in Berkeley since the 1970s.  The first major result in the subject was Henckell's Theorem~\cite{Hen1988} that the variety of aperiodic semigroups has decidable pointlikes.  The initial motivation for this problem was in an attempt to prove the decidability of Krohn-Rhodes complexity~\cite{KR68}.  However, it is just a first step in the problem.

The next major result was part of Ash's solution to the Rhodes Type II conjecture~\cite{Ash91}.  Ash showed that pointlikes were decidable for the variety $\mathbf G$ of all finite groups.  Henckell and Rhodes had conjectured~\cite{HR91} a description of the $\mathbf G$-pointlikes that was subsequently proved by Ash~\cite{Ash91}.  Henckell and Rhodes had shown that their conjecture implied the famous equality $\mathbf{PG}=\mathbf{BG}$ of the variety $\mathbf{PG}$ generated by power sets of finite groups and the variety of block groups $\mathbf{BG}$ (finite semigroups in which regular elements have a unique inverse);  see~\cite{HMPR} for details.

As the previous example shows, sometimes decidability of $\mathbf V$-pointlikes allows one to decide membership in related varieties.  For example, the second author observed in 1996 that if $\mathbf V$ has decidable pointlikes and $\mathbf W$ is locally finite with computable free objects (e.g., if $\mathbf W$ is generated by a finite semigroup), then the membership problem for the join $\mathbf V\vee \mathbf W$ is decidable~\cite{Slice}; see also~\cite{Almeida99}.  He also used Ash's result on the decidability of $\mathbf G$-pointlikes to show that $\mathbf J\vee \mathbf G $ is decidable where $\mathbf J$ is the variety of finite $\J$-trivial semigroups~\cite{Slice,Slice2}; see also~\cite{Jvg}. Henckell showed~\cite{Henckell04,HRS2010} that if $\mathbf W$ has decidable pointlikes and the Mal'cev product $\mathbf A\mbox{\textcircled{\footnotesize{$m$}}} \mathbf W$ coincides with $\mathbf W$, then the Mal'cev product $\mathbf V\mbox{\textcircled{\footnotesize{$m$}}} \mathbf W$ has decidable membership for any decidable variety $\mathbf V$.

There has been a great deal of work on pointlike sets and there are many decidability results. A by no means exhaustive list of examples include: $\mathbf J$~\cite{Slice,Jhyp}, $\mathbf R$ (the variety of $\mathscr R$-trivial semigroups)~\cite{ACZ08}, any decidable variety of finite abelian groups~\cite{DMS07}, the variety of finite $p$-groups for a prime $p$~\cite{St01} and the variety of finite nilpotent groups~\cite{K17}.  Also if $\mathbf V$ is a variety of finite monoids with decidable pointlikes, then the second author showed that $\mathbf V\ast \mathbf D$ has decidable pointlikes, where $\ast$ is the semidirect product of varieties and $\mathbf D$ is the variety of finite semigroups whose idempotents are right zeroes.  Varieties of the form $\mathbf V\ast \mathbf D$ form a large class of important varieties from the point-of-view of language theory due to the work of Straubing~\cite{Straubingdelay} and Tilson~\cite{Tilson}. From this result it follows that the varieties $\mathbf{LG}$ of local groups and $\mathbf{LSl}$ of local semilattices have decidable pointlikes.

If $\mathbf H$ is a variety of finite groups, then $\bHbar$ is the variety of finite semigroups whose subgroups belong to $\bH$.  For example, if $\mathbf 1$ denotes the trivial variety of groups, then $\overline{\mathbf 1}$ is the variety of aperiodic semigroups. At the other extreme, $\overline{\mathbf G}$ is the variety of all finite semigroups.  The variety $\overline{\mathbf G_{\mathrm{sol}}}$ (where $\mathbf G_{\mathrm{sol}}$ is the variety of finite solvable groups) plays an important role in circuit complexity (thanks to Barrington's theorem, cf.~\cite{Str1994}).  The  $\overline{\mathbf G_{\mathrm{sol}}}$-recognizable languages are exactly the languages definable in first order logic with modular quantifiers~\cite{Str1994}.

Henckell's paper~\cite{Hen1988} on the decidability of aperiodic pointlikes is not easy.  In~\cite{HRS2010AP}, Henckell, Rhodes and the second author gave a simplified approach to the most difficult step of Henckell's theorem (proving that a certain finite semigroup is aperiodic).  In the process they extended the result to show that if $\pi$ is a recursive set of prime and $\mathbf G_{\pi}$ is the variety of groups whose order is divisible only by primes in $\pi$, then $\overline{\mathbf G_{\pi}}$ has decidable pointlikes (the special case $\pi=\emptyset$ recovers Henckell's result).

Recently, there has been a revived interest in pointlikes sets because of a language theoretic interpretation, due to Almeida~\cite{Almeida99}.  The case of particular interest is that of pointlike pairs.  Let $L_1,L_2\subseteq A^+$ be disjoint regular languages.  Then $L\subseteq A^+$ is called a separator for these languages if $L_1\subseteq L$ and $L_2\subseteq A^+\setminus L$.  The separation problem for $\mathbf V$ asks for an algorithm to decide whether two disjoint regular languages admit a separator that is $\mathbf V$-recognizable.  Almeida's results~\cite{Almeida99} imply that the $\mathbf V$-separation problem is equivalent to the decidability of $\mathbf V$-pointlike pairs.  Since the aperiodic recognizable languages are just the first order definable languages by the Sch\"utzenberger-McNaughton-Pappert Theorem~\cite{Str1994}, Henckell's theorem provides the decidability of the separation problem for first order logic.  Place and Zeitoun~\cite{PlaZei2016FO} recently gave a new proof of Henckell's theorem using the language theoretic reformulation and that the aperiodic recognizable languages are the first order definable languages.  Key to their proof was a very ingenious induction scheme.  The authors~\cite{GooSte2017merge} have recently given a much shorter proof of the decidability of aperiodic pointlikes  via an algebraic approach along the lines of the inductive scheme of Place and Zeitoun.

Our main theorem here is the following sweeping generalization of Henckell's theorem and the results of~\cite{HRS2010AP}.

\begin{theoremnoname}
Let $\mathbf H$ be a variety of finite groups.  Then the following are equivalent.
\begin{enumerate}
  \item $\mathbf H$ has decidable membership.
  \item $\bHbar$ has decidable membership.
  \item The separation problem for $\bHbar$ is decidable.
  \item Pointlikes are decidable for $\bHbar$.
\end{enumerate}
\end{theoremnoname}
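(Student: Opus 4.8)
The plan is to prove the cycle of implications $(1)\To(4)\To(3)\To(2)\To(1)$. Three of these arrows are soft, using only general facts about pointlike sets together with the reformulations recalled in the introduction, while the entire weight of the theorem rests on $(1)\To(4)$, which is the effective characterization of $\bHbar$-pointlikes announced in the abstract.

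We dispatch the soft implications first. For $(2)\To(1)$: a finite group $G$ belongs to $\bHbar$ if and only if every subgroup of $G$ belongs to $\mathbf H$, and, since $\mathbf H$ is closed under subgroups, this holds if and only if $G\in\mathbf H$; as a finite group is in particular a finite semigroup, a membership algorithm for $\bHbar$ restricts to one for $\mathbf H$. For $(4)\To(3)$: if all $\bHbar$-pointlike subsets of a finite semigroup are computable, then in particular $\bHbar$-pointlike pairs are decidable, and by Almeida's equivalence (recalled above) between the $\bHbar$-separation problem and the decidability of $\bHbar$-pointlike pairs this yields $(3)$. For $(3)\To(2)$: given a finite semigroup $S$, fix a finite generating set $A$ with canonical projection $\gamma\colon A^+\onto S$; since a subset of a pointlike set is pointlike and singletons are always pointlike, $S\in\bHbar$ if and only if no pair $\{s,t\}$ with $s\neq t$ is $\bHbar$-pointlike, and for $s\neq t$ the pair $\{s,t\}$ is $\bHbar$-pointlike exactly when the disjoint regular languages $s\gamma^{-1}$ and $t\gamma^{-1}$ are not $\bHbar$-separable; so applying the separation algorithm to the finitely many such pairs decides whether $S\in\bHbar$.

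For $(1)\To(4)$ the plan is to describe the $\bHbar$-pointlike subsets of a finite semigroup $S$ as the value $\Sat(S)$ of an explicit closure operator: the least family of subsets of $S$ that contains the singletons and is closed under passing to subsets, under products $X,Y\mapsto XY$, and under one further ``$\mathbf H$-parametrised'' stabilizer operation which refines Henckell's idempotent-pumping by also pumping, inside the subsemigroup $\gen{X}$ generated by a pointlike $X$, along the $\mathbf H$-residual (the least normal subgroup with quotient in $\mathbf H$) of each maximal subgroup of $\gen{X}$, so that $\mathbf H=\mathbf 1$ recovers the aperiodic pointlike algorithm. \emph{Soundness}, that every member of $\Sat(S)$ is $\bHbar$-pointlike, is the routine direction: each generating operation is realized by a composition of relational morphisms of $S$ into members of $\bHbar$, using the profinite description of pointlikes from the introduction and the fact that a subgroup of a semigroup in $\bHbar$ lies in $\mathbf H$. \emph{Completeness}, that every $\bHbar$-pointlike subset of $S$ lies in $\Sat(S)$, is proved by assembling from $\Sat(S)$ a finite semigroup $T$ and a relational morphism $S\relto T$ under which the only subsets of $S$ relating to a point are the members of $\Sat(S)$, and then verifying that $T\in\bHbar$. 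Finally, \emph{effectiveness} --- and hence $(1)\To(4)$ --- follows because the only non-constructive-looking ingredient of $\Sat$ is the stabilizer operation, which requires no more than computing the $\mathbf H$-residuals of the finitely many groups occurring as maximal subgroups of subsemigroups of $S$, each such computation amounting to finitely many membership queries for $\mathbf H$.

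The main obstacle is the last verification, that the semigroup $T$ built from $\Sat(S)$ has all of its subgroups in $\mathbf H$. For $\mathbf H=\mathbf 1$ this is precisely the assertion that a certain power-semigroup-like construction is aperiodic, already the technically hardest step of Henckell's theorem; in general we must identify the subgroups of $T$ and show that each embeds into a group of $\mathbf H$, forcing us to combine the streamlined aperiodicity argument of \cite{GooSte2017merge,HRS2010AP} with careful bookkeeping of the group parts through their $\mathbf H$-residuals, using throughout that $\mathbf H$, being a variety, is closed under the subgroups, quotients and subdirect products that the construction produces. Designing the stabilizer operation so that this verification goes through while keeping $\Sat(S)$ small enough for soundness is the crux, and I expect it to be handled --- as in the aperiodic case --- by an inductive scheme in the spirit of Place and Zeitoun rather than by a direct fixed-point argument.
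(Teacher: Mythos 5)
Your three soft implications are fine: $(2)\To(1)$, $(4)\To(3)$ and $(3)\To(2)$ are all correct, the last by a slightly different but valid route from the paper's remark (which instead notes that a regular language is $\bHbar$-recognizable iff it is $\bHbar$-separable from its complement). The weight of the theorem is in $(1)\To(4)$, and there your proposal has two problems: the closure operator you propose is provably the wrong one for general $\bH$, and the decisive verification is not carried out but only conjectured to work.

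The operation you describe is the one-variable operation of~\cite{HRS2010AP}: for a single member $X$ of the family, pump along the $\bH$-kernel of the (unique, cyclic) maximal subgroup of $\gen{X}$. Cyclic groups are abelian, so for $\bH=\mathbf{Ab}$ (or nilpotent, solvable, \dots) that kernel is always trivial and the operation degenerates. Concretely, take $T=S_3$. If your operation is only the kernel-pumping, the closure of the singletons consists of singletons alone, yet $A_3=K_{\mathbf{Ab}}(S_3)$ is $\overline{\mathbf{Ab}}$-pointlike (Proposition~\ref{prop:H-kernel-pointlike}), so completeness fails; if instead Henckell's full pumping $X\mapsto X^\omega\bigcup_{n\geq 1}X^n$ is retained, the closure contains $\{1,(12)\}$, which the homomorphism $S_3\onto S_3/A_3$ separates, so soundness fails. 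The essential new idea of the paper is that the saturation must be closed under $\bigcup K_\bH(G)$ for \emph{arbitrary} subgroups $G$ of the subsemigroup of $2^T$ --- equivalently under the multi-variable pseudoidentity operations of Theorem~\ref{t:Hpointlikes} --- and in the example the relevant subgroup is the copy of $S_3$ formed by the singletons, generated by two elements, which no cyclic $\gen{X}$ detects. Correspondingly, the hard step, that the cover built from the saturation has transition semigroup in $\bHbar$, cannot be run by counting prime cycles as in the $\mathbf{G}_\pi$ case: the paper needs the stronger closure already to define its blowup operators (from $\bH$-kernels of lifts of Schützenberger groups) and proves membership in $\bHbar$ via the $\L$-chain/Zeiger machinery and the homomorphism of Proposition~\ref{prop:hom-to-Gamma} from subgroups of the transducer semigroup to Schützenberger groups of the saturation --- not by a Place--Zeitoun style induction, to which you defer. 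As it stands, $(1)\To(4)$ is therefore not established: the proposed $\Sat(S)$ does not equal $\PL_{\bHbar}(S)$ in general, and the verification that would reveal this is exactly the part left as an expectation.
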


Note that $\mathbf A\mbox{\textcircled{\footnotesize{$m$}}} \bHbar=\bHbar$ and so it follows that the Mal'cev product $\mathbf V\mbox{\textcircled{\footnotesize{$m$}}} \bHbar$ is decidable for any decidable variety of finite semigroups $\mathbf V$ and of finite groups $\bH$.

The basic idea of the proof of the theorem is to modify the version of Henckell's construction used in~\cite{HRS2010AP}.  Here we have to work much more seriously with Sch\"utzenberger groups than in~\cite{HRS2010AP} where the varieties of groups in question are defined by pseudoidentities in a single variable.  In particular, $\mathbf H$-kernels of Sch\"utz\-en\-ber\-ger groups play a fundamental role in this paper.  The rough intuition in~\cite{HRS2010AP} is that to show that a transformation semigroup is aperiodic, one has to show that no element has an orbit which is a cycle of prime length.  In this paper, we have to take into account orbits of $\mathbf H$-kernels of subgroups.  We found a crucial simplification of a key argument of~\cite{HRS2010AP} that allows us to do this.

\subsection*{Organization of the paper}
After a section of preliminaries, we state our main result and its corollaries.  In particular, we provide a constructible collection of $\bHbar$-pointlike sets that we aim to prove are precisely the $\bHbar$-pointlikes.  The following section constructs a relational morphism from our initial finite semigroup to another finite semigroup such that the preimage of each point under the relation belongs to the constructible collection of $\bHbar$-pointlikes.  The most technical part of the paper is the proof that the relational morphism has codomain in $\bHbar$, which is proved in the following section.  After a brief section completing the proof of the main theorem, there is a final section discussing alternative characterizations of the $\bHbar$-pointlikes that may be more efficient from the point of view of complexity and which can be used to establish reducibility results in the sense of~\cite{AS2000}.  %This last section requires familiarity with profinite methods.

\section{Preliminaries}\label{sec:prelim}
\subsection*{Basics}     A \emph{variety of finite (semi)groups} is a class of finite (semi)groups closed under finite direct products, sub(semi)groups and homomorphic images. Throughout the paper, we fix a variety of finite groups $\bH$, and we denote by $\bHbar$ the variety of finite semigroups $S$ such that all subgroups of $S$ belong to $\bH$.

The free semigroup (monoid) on a set $A$ is denoted by $A^+$ ($A^*$).

\begin{notation}
Following \cite{RS2009,HRS2010AP}, we use the notational convention that functions are applied on the right, and hence compositions of functions are read from left to right.
\end{notation}

Let $S$ be a semigroup. We denote by $S^I$ the monoid obtained by adjoining a new identity element $I$ to $S$.
If $s$ is an element of $S^I$, then we denote by $r_s \colon S^I \to S^I$ the function defined by $xr_s := xs$ for $x$ in $S$. The assignment $s \mapsto r_s$ is a semigroup embedding of $S$ into the full transformation semigroup on $S^I$, called the \emph{right regular representation} of $S$.  Note that $S$ is invariant under $r_s$ for $s\in S^I$.

% The \emph{left regular representation}, $s \mapsto \ell_s$, is defined analogously.

The multiplication of $S$ extends to subsets of $S$, by defining, for any subsets $X$, $Y$ of $S$, $XY := \{xy \mid x \in X, y \in Y\}$. When, e.g., $Y = \{y\}$, we write $Xy$ for $X\{y\}$. The set of subsets of $S$ forms a semigroup $2^S$, which is called the \emph{power semigroup} of $S$, and is partially ordered by inclusion.

\emph{Green's preorders} on $S$ are defined by writing, for $x, y \in S$, $x \leqL y$ when $x \in S^Iy$; $x \leqR y$ when $x \in yS^I$; $x \leq_{\J} y$ when $x \in S^IyS^I$; and $x \leqH y$ when both $x \leqL y$ and $x \leqR y$.
The  \emph{Green equivalence relations} are defined by $x \mathrel{\X} y$ when $x \leq_{\X} y$ and $y \leq_{\X} x$, for any $\X \in \{{\L},{\R},{\J},{\sH}\}$. For any $x \in S$, $L_x$, $R_x$, $J_x$ and $H_x$ denote the ${\L}$-, ${\R}$-, ${\J}$-, and ${\sH}$-class of $x$, respectively.
The \emph{strict} partial orders are defined by $x <_{\X} y$ when $x \leq_{\X} y$, but it is not the case that $x \mathrel{\X} y$, where $\X \in \{{\L},{\R},{\J},{\sH}\}$. %
Finite semigroups are \emph{stable}, which means that, whenever $x \mathrel{\J} y$, we have $x \geq_{\R} y$ implies $x \R y$ and $x \geq_{\L} y$ implies $x \L y$, cf. e.g., \cite[Thm.~A.2.4]{RS2009}.

A \emph{relational morphism} $\phi \colon S \relto T$ is a function $\phi \colon S \to 2^T$ such that (i) $s\phi \neq \emptyset$ for every $s$ in $S$, (ii) $s_1\phi s_2\phi \subseteq (s_1s_2)\phi$ for every $s_1, s_2$ in $S$. In other words, the \emph{graph} of $\phi$, $\#\phi := \{(s,t) \mid s \in S, t \in s\phi\}$, is a subsemigroup of $S \times T$ such that the restriction of the projection onto the first coordinate is surjective. We denote by $p_S \colon \#\phi \to S$ and $p_T \colon \#\phi \to T$ the restrictions of the projection maps.

\subsection*{Pointlike sets} Let $\V$ be a variety of finite semigroups. A subset $X$ of a finite semigroup $S$ is called \emph{$\V$-pointlike}  if, for any relational morphism $\phi \colon S \relto T$ with $T \in \V$, there exists $t \in T$ such that $X \subseteq t\phi^{-1}$. Any singleton set is $\V$-pointlike, and the collection of $\V$-pointlike subsets of a semigroup $S$ forms a downward closed subsemigroup $\PL_{\V}(S)$ of the power semigroup $2^S$. %, partially ordered by inclusion, and with multiplication of subsets of $S$.
Moreover, the assignment $S \mapsto \PL_{\V}(S)$ has the following `monadic' property first observed by Henckell and Rhodes.
\begin{proposition}\label{prop:pointlike-monad}
Let $S$ be a finite semigroup. For any $\V$-pointlike subset $\mathcal{X}$ of the semigroup $\PL_{\V}(S)$, the union $\bigcup \mathcal{X}$ is a $\V$-pointlike subset of $S$.
\end{proposition}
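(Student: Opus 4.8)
The plan is to reduce directly to the definition of pointlikeness: from an arbitrary test relational morphism on $S$ I will manufacture a companion relational morphism on $\PL_{\V}(S)$, and then invoke the hypothesis on $\mathcal{X}$.

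Concretely, fix a relational morphism $\phi \colon S \relto T$ with $T \in \V$; I must exhibit $t \in T$ with $\bigcup \mathcal{X} \subseteq t\phi^{-1}$. The auxiliary morphism to consider is $\Psi \colon \PL_{\V}(S) \to 2^T$ given by
\[
X\Psi := \{ t \in T \mid X \subseteq t\phi^{-1}\}.
\]
First I would check that $\Psi$ is a relational morphism $\PL_{\V}(S) \relto T$. Each value $X\Psi$ is nonempty precisely because $X$, being an element of $\PL_{\V}(S)$, is $\V$-pointlike and $T \in \V$. For multiplicativity, suppose $t_1 \in X_1\Psi$ and $t_2 \in X_2\Psi$; unravelling, $t_i \in x_i\phi$ for every $x_i \in X_i$, so the relational morphism inclusion $x_1\phi\, x_2\phi \subseteq (x_1x_2)\phi$ gives $t_1t_2 \in (x_1x_2)\phi$ for all $x_1 \in X_1$, $x_2 \in X_2$, that is, $X_1X_2 \subseteq (t_1t_2)\phi^{-1}$, i.e.\ $t_1t_2 \in (X_1X_2)\Psi$. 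Hence $X_1\Psi\, X_2\Psi \subseteq (X_1X_2)\Psi$, using that $\PL_{\V}(S)$ is closed under the product of $2^S$.

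Now apply the assumption that $\mathcal{X}$ is a $\V$-pointlike subset of $\PL_{\V}(S)$ to the relational morphism $\Psi$, whose codomain $T$ lies in $\V$: there is $t \in T$ with $\mathcal{X} \subseteq t\Psi^{-1}$, which says exactly that $t \in X\Psi$, equivalently $X \subseteq t\phi^{-1}$, for every $X \in \mathcal{X}$. Taking the union over $X \in \mathcal{X}$ yields $\bigcup \mathcal{X} \subseteq t\phi^{-1}$, as required. Since $\phi$ was an arbitrary relational morphism from $S$ into $\V$, this shows $\bigcup \mathcal{X}$ is $\V$-pointlike.

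I do not anticipate any real obstacle here: the entire content of the argument is the definition of $\Psi$, and the single computation — multiplicativity of $\Psi$ — is the routine power-operation verification, using only that $\phi$ is a relational morphism and that every element of $\PL_{\V}(S)$ is $\V$-pointlike.
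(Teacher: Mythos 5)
Your proof is correct and is essentially identical to the paper's: the paper defines the same auxiliary relational morphism $\Phi \colon \PL_{\V}(S) \relto T$, $X\Phi = \{t \mid X \subseteq t\phi^{-1}\}$, and concludes exactly as you do. Your explicit verification that $\Psi$ is a relational morphism (nonemptiness via pointlikeness of each $X$, multiplicativity via $x_1\phi\,x_2\phi \subseteq (x_1x_2)\phi$) fills in a step the paper leaves to the reader.
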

\begin{proof}
Suppose that $\mathcal{X}$ is $\V$-pointlike in $\PL_{\V}(S)$, and let $\phi \colon S \relto V$ be a relational morphism with $V \in \V$. We show that there exists $v \in V$ such that $\bigcup \mathcal{X} \subseteq v\phi^{-1}$. Define the relation $\Phi \colon \PL_{\V}(S) \relto V$ by setting $X\Phi := \{v \in V \mid X \subseteq v\phi^{-1}\}$ for every $X \in \PL_\V(S)$. Then $\Phi$ is a relational morphism. Since $\mathcal{X}$ is $\V$-pointlike, pick $v \in V$ such that $\mathcal{X} \subseteq v\Phi^{-1}$. By definition of $\Phi$, this means that $X \subseteq v\phi^{-1}$ for every $X \in \mathcal{X}$. Hence, $\bigcup \mathcal{X} \subseteq v\phi^{-1}$.
\end{proof}
%Moreover, for any $\mathcal{X} \in \PL_{\V}(\PL_{\V}(S))$, the union $\bigcup \mathcal{X}$ lies in $\PL_{\V}(S)$; we refer to this fact as the \emph{monad property} of $\PL_{\V}$. \cite{?}

\subsection*{$\bH$-kernels} Recall that, for any finite group $G$, the \emph{$\bH$-kernel} $K_{\bH}(G)$ of $G$ is defined to be the smallest normal subgroup $N$ of $G$ such that $G/N$ belongs to $\bH$.  Observe that a group $G$ belongs to $\bH$ if and only if $K_{\bH}(G)$ is trivial. Therefore, membership in $\bH$ is decidable if and only if the $\bH$-kernel of any finite group is computable.

We will need the following two lemmas about groups and their $\bH$-kernels.
\begin{lemma}\label{lem:lift-group}
Let $S$ be a finite semigroup and $\phi \colon S \onto G$ a surjective semigroup homomorphism with $G$ a group. There exists a subgroup $H \leq S$ such that $H\phi = G$.
\end{lemma}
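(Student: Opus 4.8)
The plan is to find a subgroup $H\le S$ mapping onto $G$ by passing to a minimal such subsemigroup and then showing minimality forces it to be a group. First I would note that the set of subsemigroups $T\le S$ with $T\phi = G$ is nonempty (take $T=S$) and finite, so by a cardinality argument we may choose $T$ minimal with respect to this property. I claim $T$ is itself a group. The key step is to produce an idempotent that acts as an identity: pick any $e$ in the minimal ideal (kernel) of $T$, which is contained in some $\mathscr{H}$-class; more directly, since $T$ is finite, $eT$ and $Te$ for a suitable idempotent $e$ of $T$ give subsemigroups, and one wants to argue that $eTe$ (or $Te$, or $eT$) still surjects onto $G$, contradicting minimality unless $T=eTe$, at which point $T$ is a monoid with identity $e$.

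The crucial observation making this work is that $G$, being a group, has no proper subsemigroup that surjects onto it — indeed any subsemigroup of a finite group is a subgroup, hence any proper subsemigroup is a proper subgroup and cannot be all of $G$. So for any idempotent $e\in T$, the subsemigroup $(eTe)\phi$ is a subsemigroup of $G$ containing the idempotent $e\phi$; but the only idempotent of $G$ is $1$, so $e\phi = 1$ and $(eTe)\phi$ is a subsemigroup of $G$ containing $1$, hence a subgroup of $G$. I then need $(eTe)\phi = G$: given $g\in G$, choose $t\in T$ with $t\phi = g$; then $ete \in eTe$ and $(ete)\phi = (e\phi)g(e\phi) = g$. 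Therefore $(eTe)\phi = G$, and minimality of $T$ forces $T = eTe$. Now $T$ is a finite monoid with identity $e$; by finiteness, for each $t\in T$ some power $t^n$ is idempotent, but the only idempotent is $e$ (any idempotent $f$ satisfies $f=efe\in eTe$ and $f\phi$ is an idempotent of $G$, so $f\phi=1$, and... ) — here I need to be a little careful, so instead I would iterate: replace $T$ by $eTe$ and repeat, or directly observe that a finite monoid in which $e$ is the unique idempotent is a group, since for $t\in T$ with $t^n=e$ we get $t^{n-1}$ is a two-sided inverse of $t$. To see $e$ is the unique idempotent of $eTe$: if $f=f^2\in eTe$ then $ef=fe=f$, and by the same minimality argument applied to the subsemigroup generated by... actually the cleanest route is that $\langle f\rangle$ has image a subgroup of $G$ containing $f\phi=1$, and if $f\neq e$ then $fTf$ is a proper subsemigroup — let me instead simply set $H := eTe$ from the start and run the minimal-subsemigroup argument inside the class of subsemigroups of the form (subsemigroup surjecting onto $G$), concluding a minimal one is a group as above.

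The main obstacle I anticipate is the bookkeeping to guarantee that the unique idempotent in the minimal $T$ really is a two-sided identity and that no proper idempotent survives; the conceptual content is entirely captured by the two facts "every subsemigroup of a finite group is a subgroup" and "conjugating a preimage of $g$ by an idempotent in the kernel still maps to $g$", but assembling these into a clean minimality argument requires choosing the right family of subsemigroups to minimize over. An alternative, possibly slicker, approach avoiding minimality: let $e$ be any idempotent in the kernel (minimal ideal) of $S$ lying in a group $\mathscr{H}$-class, restrict $\phi$ to... — but in general $\phi$ need not be injective on any $\mathscr{H}$-class, so this does not immediately work, and I would fall back on the minimality argument, which is robust. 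So the proof structure is: (1) the family of subsemigroups of $S$ surjecting onto $G$ is nonempty and finite; (2) pick a minimal one, call it $H$; (3) for any idempotent $e\in H$, show $eHe$ surjects onto $G$, so $H=eHe$ by minimality; (4) conclude $H$ is a finite monoid with identity, and by a second application of minimality (or the unique-idempotent observation) that $H$ is a group; (5) this $H$ works.
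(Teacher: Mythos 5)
Your argument is correct in substance; note that the paper itself does not prove this lemma but simply cites \cite[Prop.~4.1.44]{RS2009}, so a self-contained proof like yours is the genuinely new content here. The one point where your write-up wavers, the uniqueness of the idempotent in the minimal subsemigroup $T$, is repaired exactly as you suggest: if $f=f^2\in T$, then $f\phi$ is an idempotent of the group $G$, so $f\phi=1$ and $(fTf)\phi=G$; minimality then forces $T=fTf$, which makes $f$ a two-sided identity of $T$, hence $f=e$. A finite semigroup with a unique idempotent which is a two-sided identity is a group (some power $t^n$ of each $t$ is idempotent, hence equals $e$, and $t^{n-1}$, or $e$ itself when $n=1$, inverts $t$), so the minimal $T$ is the desired subgroup $H$. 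For comparison, the standard textbook proof (essentially the one behind the cited result in \cite{RS2009}) runs the same minimality argument but avoids idempotent bookkeeping altogether: for any $t\in T$, the sets $tT$ and $Tt$ are subsemigroups of $T$ with $(tT)\phi=(t\phi)G=G=(Tt)\phi$, so minimality gives $tT=T=Tt$ for all $t\in T$; thus $T$ is both left and right simple, and a left and right simple finite semigroup is a group. Your route via $eTe$ buys a very concrete identity element early on, at the cost of the extra step showing no other idempotent survives; the $tT$, $Tt$ route reaches the group structure in one stroke.
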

\begin{proof}
See, e.g., \cite[Prop.~4.1.44]{RS2009}.
\end{proof}

\begin{lemma}\label{lem:preserve-H-kernels}
For any onto group homomorphism $\phi \colon G \onto H$, $K_\bH(G)\phi = K_\bH(H)$.
\end{lemma}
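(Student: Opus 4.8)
Write $N := K_\bH(G)$ and $M := K_\bH(H)$; the plan is to prove the two inclusions $M \subseteq N\phi$ and $N\phi \subseteq M$ separately, using in each direction the defining minimality property of the $\bH$-kernel together with the fact that $\bH$ is closed under homomorphic images.

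For the inclusion $M \subseteq N\phi$, I would first observe that $N\phi$ is a normal subgroup of $H$, since the image of a normal subgroup under the surjection $\phi$ is normal. Then I would identify the quotient: because $N \leq N\phi^{-1}\phi^{\,} $ — more precisely, since $\phi$ is onto and $N \leq \ker\phi \cdot N$, the group $H/N\phi$ is a homomorphic image of $G/N$ (concretely, $H/N\phi \cong G/(\ker\phi\cdot N)$ by the isomorphism theorems). As $G/N \in \bH$ and $\bH$ is closed under homomorphic images, $H/N\phi \in \bH$. By minimality of $M = K_\bH(H)$ among normal subgroups with $\bH$-quotient, $M \subseteq N\phi$.

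For the reverse inclusion, I would pass to the preimage $M\phi^{-1}$, which is a normal subgroup of $G$ (preimage of a normal subgroup). Since $\phi$ is onto, $G/M\phi^{-1} \cong H/M \in \bH$, so minimality of $N = K_\bH(G)$ gives $N \subseteq M\phi^{-1}$. Applying $\phi$ and using surjectivity (so that $M\phi^{-1}\phi = M$) yields $N\phi \subseteq M$. Combining the two inclusions gives $K_\bH(G)\phi = K_\bH(H)$.

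I do not anticipate a genuine obstacle here: the argument is a routine application of the isomorphism theorems and the closure of $\bH$ under quotients, and the only points requiring a moment's care are that images and preimages of normal subgroups behave well under a surjection and that $M\phi^{-1}\phi = M$ because $\phi$ is onto.
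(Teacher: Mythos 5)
Your proof is correct and follows essentially the same route as the paper: one inclusion via the normal preimage $K_\bH(H)\phi^{-1}$ (whose quotient is $H/K_\bH(H)\in\bH$) and minimality of $K_\bH(G)$, the other via the surjection of $G/K_\bH(G)$ onto $H/K_\bH(G)\phi$ and minimality of $K_\bH(H)$. No gaps.
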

\begin{proof}
The composite map $G \onto H \onto H/{K_\bH(H)}$ has kernel $K_\bH(H)\phi^{-1}$, which thus contains $K_\bH(G)$ since $H/{K_\bH(H)}$ is in $\bH$. Hence, $K_\bH(G)\phi \subseteq K_\bH(H)$. For the other inclusion, note that $G/{K_\bH(G)}$, which is in $\bH$, maps onto $H/{K_\bH(G)\phi}$, so that $H/{K_\bH(G)\phi}$ is in $\bH$. Therefore, $K_\bH(H) \subseteq K_\bH(G)\phi$, as required.
\end{proof}

The following proposition establishes a first connection between $\bH$-kernels and $\bHbar$-pointlike sets.
\begin{proposition}\label{prop:H-kernel-pointlike}
Let $S$ be a finite semigroup and let $G \leq S$ be a subgroup. Then $K_{\bH}(G)$ is an $\bHbar$-pointlike subset of $S$.
\end{proposition}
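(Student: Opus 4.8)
The plan is to unwind the definition of $\bHbar$-pointlike directly, using the two group-theoretic lemmas just established. So fix a relational morphism $\phi \colon S \relto T$ with $T \in \bHbar$; we must produce a \emph{single} element $t \in T$ with $K_{\bH}(G) \subseteq t\phi^{-1}$. The right object to work with is the graph $\#\phi \leq S \times T$ together with its projections $p_S \colon \#\phi \to S$ (which is onto) and $p_T \colon \#\phi \to T$.

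First I would lift $G$ into the graph. Since $p_S$ is onto, $p_S^{-1}(G)$ is a subsemigroup of $\#\phi$ on which $p_S$ restricts to a surjection onto the group $G$; by Lemma~\ref{lem:lift-group} there is a subgroup $H \leq p_S^{-1}(G) \leq S \times T$ with $Hp_S = G$. As $H$ is a finite group, its image $Hp_T$ is a subgroup of $T$, and since $T \in \bHbar$ we get $Hp_T \in \bH$. Write $e$ for the identity element of the group $Hp_T$ (an idempotent of $T$). The surjection $p_T|_H \colon H \onto Hp_T$ has quotient in $\bH$, so its kernel contains $K_{\bH}(H)$ by minimality of the $\bH$-kernel; that is, $K_{\bH}(H)\, p_T = \{e\}$.

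Next I would transport $\bH$-kernels along the other projection. The map $p_S|_H \colon H \onto G$ is an onto homomorphism of finite groups, so Lemma~\ref{lem:preserve-H-kernels} gives $K_{\bH}(H)\, p_S = K_{\bH}(G)$. Combining this with the previous display: for every $k \in K_{\bH}(G)$ there is $h \in K_{\bH}(H)$ with $hp_S = k$ and $hp_T = e$, i.e.\ $h = (k,e)$. Since $h \in H \subseteq \#\phi$, this says exactly that $e \in k\phi$, i.e.\ $k \in e\phi^{-1}$. Hence $K_{\bH}(G) \subseteq e\phi^{-1}$, and as $\phi$ was arbitrary, $K_{\bH}(G)$ is $\bHbar$-pointlike.

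I do not expect a genuine obstacle here: the argument is a routine splicing of Lemmas~\ref{lem:lift-group} and~\ref{lem:preserve-H-kernels}. The only point needing care is the bookkeeping that the witnessing element $t = e$ is genuinely independent of the chosen $k \in K_{\bH}(G)$, and this is precisely what the identity $K_{\bH}(H)\, p_T = \{e\}$ guarantees.
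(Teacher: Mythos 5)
Your proof is correct and is essentially the paper's own argument: lift $G$ to a subgroup $H$ of the graph $\#\phi\cap(G\times T)$ via Lemma~\ref{lem:lift-group}, note $Hp_T\in\bH$ so that $K_\bH(H)$ maps to the identity of $Hp_T$, and push forward with Lemma~\ref{lem:preserve-H-kernels} to get $K_\bH(G)=K_\bH(H)p_S\subseteq e\phi^{-1}$. The only cosmetic difference is that you justify $K_\bH(H)p_T=\{e\}$ directly by minimality of the $\bH$-kernel, whereas the paper phrases the same step as $K_\bH(H)\subseteq ep_T^{-1}$; the content is identical.
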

\begin{proof}

Let $\phi \colon S \relto T$ be a relational morphism with $T \in \bHbar$. Applying Lemma~\ref{lem:lift-group}  to the restriction of the first projection, $p_S \colon \#\phi \cap (G \times T) \onto G$, pick a subgroup $H \leq \#\phi$ such that $Hp_S = G$. Since $Hp_T$ is a subgroup of $T$ and $T \in \bHbar$, we have $Hp_T \in \bH$. Therefore, $K_{\bH}(H) \subseteq up_T^{-1}$, where $u$ is the unit of the group $Hp_T$. By Lemma~\ref{lem:preserve-H-kernels}, $K_{\bH}(G) = K_{\bH}(H)p_S \subseteq up_T^{-1}p_S = u\phi^{-1}$.
\end{proof}

\subsection*{Automata and flows}
In this paper, by an \emph{automaton}\footnote{A more descriptive term would be `complete finite deterministic automaton (without a specified set of final states)', but there is no danger of confusion, since this is the only kind of automaton we consider in this paper.} we mean a quadruple $\A = (Q,A,\tau,i)$, where $Q$ and $A$ are finite sets, $i \in Q$, and, for each $a \in A$, $\tau_a$ is a function from $Q$ to $Q$. Thus, $\A$ defines a right action of $A^+$ on $Q$, and we usually omit explicit reference to $\tau$: if $w = a_1\dots a_n \in A^+$ then $qw := q\tau_{a_1}\cdots\tau_{a_n}$.  The mapping $q\mapsto qw$ is denoted $\tau_w$.   %The function $a \mapsto \tau_a$ uniquely extends to a homomorphism from $A^+$ to the which assigns a function $\tau_w$ to any word $w \in A^+$. %The unique extension of $\tau$ to a homomorphism $A^+ \to \End(Q)$ is also denoted by $\tau$.
%We write $qw$ for the image of $q$ under $\tau_w$, for any $q \in Q$ and $w \in A^+$.
The \emph{transition semigroup} $\T_\A$ of $\A$ is the image of the homomorphism $w \mapsto \tau_w$, or equivalently the semigroup of transformations on $Q$ generated by $\{\tau_a \mid a \in A\}$.
If $\V$ is a variety of finite semigroups, then by a \emph{$\V$-automaton} we mean an automaton $\A$ such that $\T_\A \in \V$.

Let $T$ be a semigroup generated by a subset $A \subseteq T$. For a word $w \in A^+$, we write $w_T$ for the element of $T$ represented by $w$. Let $\A = (Q,A,\tau,i)$ be an automaton. A \emph{$T$-flow} on $\A$ is a function $\Phi \colon Q \to 2^{T^I}$ such that $I \in i\Phi$, and for all $a \in A$, $q \in Q$, $(q\Phi) a \subseteq (qa)\Phi$. Note that, if $\Phi$ is a flow, then for any $w \in A^+$, $q \in Q$, we have $(q\Phi)w_T \subseteq (qw)\Phi$, by a straightforward induction on the length of $w$.
Flows on $\V$-automata allow for the following useful alternative characterization of $\V$-pointlike sets\footnote{This is a variation on the results of~\cite{HRS2012}, where more complicated problems are considered.}.
\begin{proposition}\label{prop:pointlike-flow}
Let $\V$ be a variety of finite semigroups, let $T$ be a finite semigroup generated by $A \subseteq T$, and let $X \subseteq T$. The following are equivalent:
\begin{enumerate}
\item the set $X$ is $\V$-pointlike;
\item for any $T$-flow $\Phi$ on a $\V$-automaton $\A$,  $X \subseteq q\Phi$ for some $q \in Q$.
\end{enumerate}
\end{proposition}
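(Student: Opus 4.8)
The plan is to prove the two implications separately, in each direction passing between relational morphisms $T \relto (\text{object of } \V)$ and $T$-flows on $\V$-automata. For $(1) \Rightarrow (2)$, I would start from a $\V$-automaton $\A = (Q,A,\tau,i)$ and a $T$-flow $\Phi$ on it, and consider the relation $\psi \colon T \relto \T_\A$ defined by $t\psi := \{\tau_w \mid w \in A^+,\ w_T = t\}$. Since $A$ generates $T$ and $w \mapsto \tau_w$ is a homomorphism onto $\T_\A$, this $\psi$ is a relational morphism, and $\T_\A \in \V$. Applying $\V$-pointlikeness of $X$ to $\psi$ produces $\sigma \in \T_\A$ with $X \subseteq \sigma\psi^{-1}$. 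Setting $q := i\sigma$, for any $x \in X$ choose $w \in A^+$ with $w_T = x$ and $\tau_w = \sigma$; then $x = I\cdot w_T \in (i\Phi)w_T \subseteq (iw)\Phi = (i\sigma)\Phi = q\Phi$, using $I \in i\Phi$ and the flow property. Hence $X \subseteq q\Phi$, as required.

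For $(2) \Rightarrow (1)$, let $\phi \colon T \relto V$ be a relational morphism with $V \in \V$; I want $v \in V$ with $X \subseteq v\phi^{-1}$. First I extend $\phi$ to a relational morphism $\phi^I \colon T^I \relto V^I$ by $I\phi^I := \{I\}$, and for each $a \in A$ I choose some $v_a \in a\phi$. Let $\A := (V^I, A, \tau, I)$ be the automaton with $\tau_a := r_{v_a}$, the right translation by $v_a$ on $V^I$. Because $v \mapsto r_v$ is an injective homomorphism, $\T_\A$ is isomorphic to the subsemigroup of $V$ generated by $\{v_a \mid a \in A\}$, hence lies in $\V$, so $\A$ is a $\V$-automaton. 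Define $\Phi \colon V^I \to 2^{T^I}$ by $q\Phi := q(\phi^I)^{-1}$; then $I \in I\Phi$, and for $t \in q(\phi^I)^{-1}$ we have $qv_a \in t\phi^I\, a\phi^I \subseteq (ta)\phi^I$, so $ta \in (qv_a)(\phi^I)^{-1} = (q\tau_a)\Phi$, which is exactly the flow inequality $(q\Phi)a \subseteq (qa)\Phi$. Thus $\Phi$ is a $T$-flow on the $\V$-automaton $\A$, and (2) yields $q \in V^I$ with $X \subseteq q\Phi = q(\phi^I)^{-1}$. If $X = \emptyset$ there is nothing to prove; otherwise any $x \in X \subseteq T$ forces $q \in x\phi^I = x\phi \subseteq V$, and then $X \subseteq q\phi^{-1}$ with $q \in V$, so $X$ is $\V$-pointlike.

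I do not anticipate a genuine difficulty here: the argument is essentially a dictionary between the two descriptions of $\V$-pointlikeness. The two points needing a little care are verifying that the transition semigroup of the translation automaton belongs to $\V$ — it is a subsemigroup of $V$, so this is precisely closure of $\V$ under subsemigroups — and the bookkeeping around the adjoined identity $I$: making sure the base condition $I \in i\Phi$ of a flow holds, and checking that the state returned by hypothesis (2) cannot be the adjoined identity once $X \neq \emptyset$.
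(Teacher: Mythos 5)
Your proof is correct and follows essentially the same route as the paper's: the same relational morphism $t\mapsto\{\tau_w \mid w_T=t\}$ for $(1)\Rightarrow(2)$, and the same translation automaton on $V^I$ with flow $q\Phi = q(\phi^I)^{-1}$ for $(2)\Rightarrow(1)$, including the handling of the adjoined identity and the nonemptiness of $X$.
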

\begin{proof}
We first prove (1) $\Rightarrow$ (2). Suppose that $X$ is $\V$-pointlike, and let $\Phi$ be a $T$-flow on a $\V$-automaton $\A$. Define the relation $\phi \colon T \relto \T_\A$ by $t\phi := \{\tau_w \mid w \in A^+, w_T = t\}$, for $t \in T$. Then $\phi$ is a relational morphism.  Since $X$ is $\V$-pointlike and $\A$ is a $\V$-automaton, pick $\tau_w \in \T_\A$ such that $X \subseteq (\tau_w)\phi^{-1}$. We claim that $X \subseteq (iw)\Phi$. Indeed, if $x \in X$, then since $\tau_w \in x\phi$, pick $v \in A^+$ such that $v_T = x$ and $\tau_v = \tau_w$. Since $\Phi$ is a flow, we have $x = Ix \in (i\Phi) v_T \subseteq (iv)\Phi = (iw)\Phi$.

For the converse, let $\psi\colon T\to V$ be a relational morphism  with $V\in \V$ and suppose that $X\neq \emptyset$ is as in (2).  Extend $\psi$ to a relational morphism $\psi\colon T^I\relto V^I$ by putting $I\psi=\{I\}$.  Fix $v_a\in a\psi$ for each $a\in A$ and define $\A=(V^I,A,\tau,I)$ where $v\tau_a = vv_a$ for $v\in V^I$ and $a\in A$.  Then the transition semigroup of $\A$ is the subsemigroup $\langle v_a \mid a\in A\rangle$ of $V$ and hence belongs to $\V$.  Define a flow $\Phi\colon V^I\to 2^{T^I}$ by $v\Phi = v\psi^{-1}$ for $v\in V^I$.  Note that $(v\Phi) a=v\psi^{-1}a\subseteq v\psi^{-1}v_a\psi^{-1}\subseteq (vv_a)\psi^{-1}=(v\tau_a)\Phi$ and $I\in I\Phi$.  Thus $\Phi$ is a flow and so $X\subseteq v\Phi$ for some $v\in V^I$.  As $\emptyset\neq X\subseteq T$, we conclude that $X\subseteq v\Phi=v\psi^{-1}$ with $v\in V$.  Thus $X$ is $\V$-pointlike.
\end{proof}

\subsection*{The Sch\"utzenberger group}\label{subsec:schutzenberger-group}
%In this section, we recall the definition of and basic facts about the \emph{Sch\"utzenberger group} of an element in a semigroup.
%
Let $S$ be a finite semigroup. For any subset $X$ of $S$, the \emph{right stabilizer} $\St_R(X)$  of $X$ is the set of elements $s \in S^I$ such that $Xs \subseteq X$; this is a submonoid of $S^I$. Let $L$ be an $\L$-class in $S$. The stability of finite semigroups implies that, for any $\sH$-class $H \subseteq L$, we have $\St_R(H) = \St_R(L)$. Thus, for any $\sH$-class $H\subseteq L$, the monoid $\St_R(L)=\St_R(H)$ acts on $H$ via right multiplication.
Moreover, for any $s \in \St_R(H)$, the function $r_s|_H$, right multiplication by $s$ restricted to $H$, is a permutation of $H$ which is either the identity or fixes no points of $H$ at all. Hence, the faithful quotient of the action of $\St_R(H)$ on $H$ is a group, which is denoted by $\Gamma_R(H)$ and is called the (right) \emph{Sch\"utzenberger group} of $H$. The action of $\Gamma_R(H)$ on $H$ is transitive and has \emph{trivial point stabilizers}, i.e., if $g \in \Gamma_R(H)$, and $xg = x$ for some $x \in H$, then $g$ is equal to the unit of $\Gamma_R(H)$. In particular, $|\Gamma_R(H)|=|H|$.  In the event that an $\sH$-class $H$ is itself a subgroup of $S$, it is isomorphic to $\Gamma_R(H)$. See, e.g., \cite[Sec.~A.3.1]{RS2009} for proofs of the above facts.

\begin{lemma}\label{lem:lift-Schutz-group}
Let $L$ be an ${\sL}$-class in a finite semigroup $S$.
\begin{enumerate}
\item For any ${\sH}$-classes $H, H' \subseteq L$, the kernels of the natural homomorphisms $\St_R(L) \onto \Gamma_R(H)$ and $\St_R(L) \onto \Gamma_R(H')$ are equal; in particular, $\Gamma_R(H) \cong \Gamma_R(H')$.
\item There exists a subgroup $G_L$ of $\St_R(L)$ such that, for every ${\sH}$-class $H \subseteq L$, the restriction of the natural homomorphism $\St_R(L) = \St_R(H) \onto \Gamma_R(H)$ to $G_L$ is surjective.
\end{enumerate}
\end{lemma}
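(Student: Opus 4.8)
The plan is to prove part (1) by hand --- showing directly that the natural surjections $q_H \colon \St_R(L) \onto \Gamma_R(H)$, as $H$ ranges over the $\sH$-classes contained in $L$, all have a common kernel --- and then to deduce part (2) from this by lifting the (now well-defined) quotient group along $\St_R(L)$ via Lemma~\ref{lem:lift-group}.

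For part (1), write $M := \St_R(L)$, and recall that $M = \St_R(H)$ for every $\sH$-class $H \subseteq L$ by stability, and that $q_H$ sends $s \in M$ to the class of the permutation $r_s|_H$ of $H$ --- this being precisely the definition of $\Gamma_R(H)$ as the faithful quotient of the action of $\St_R(H)$ on $H$. Hence $s q_H = s' q_H$ if and only if $hs = hs'$ for all $h \in H$. The point I would verify is that this condition does not depend on $H$: if $H' \subseteq L$ is another $\sH$-class, fix $h_0 \in H$; for any $h' \in H'$ we have $h' \L h_0$, so $h' = u h_0$ for some $u \in S^I$, and then $hs = hs'$ for all $h \in H$ gives $h' s = u(h_0 s) = u(h_0 s') = h' s'$. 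Since $h' \in H'$ was arbitrary and the argument is symmetric in $H$ and $H'$, we get $s q_H = s' q_H \iff s q_{H'} = s' q_{H'}$ for all $s, s' \in M$; thus $q_H$ and $q_{H'}$ have the same fibres, so $\ker q_H = \ker q_{H'}$. In particular $\Gamma_R(H) \cong M/{\ker q_H} = M/{\ker q_{H'}} \cong \Gamma_R(H')$.

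For part (2), fix one $\sH$-class $H_0 \subseteq L$. Since $M$ is a finite monoid, hence a finite semigroup, and $\Gamma_R(H_0)$ is a group, Lemma~\ref{lem:lift-group} applied to the surjection $q_{H_0} \colon M \onto \Gamma_R(H_0)$ produces a subgroup $G_L \leq M$ with $G_L q_{H_0} = \Gamma_R(H_0)$. Now let $H \subseteq L$ be an arbitrary $\sH$-class. By part (1), $q_H$ and $q_{H_0}$ have the same kernel, so there is an isomorphism $\theta \colon \Gamma_R(H_0) \to \Gamma_R(H)$ with $q_H = q_{H_0}\theta$; hence $G_L q_H = (G_L q_{H_0})\theta = \Gamma_R(H_0)\theta = \Gamma_R(H)$, which is exactly the asserted surjectivity of the restriction of $q_H$ to $G_L$.

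I do not anticipate a real obstacle here: the only genuine content is the $H$-independence of the kernel in part (1), obtained through the substitution $h' = u h_0$ coming from $h' \L h_0$; everything else is routine. The description of $q_H$ is just the definition of the Sch\"utzenberger group, the comparison isomorphism $\theta$ is the universal property of the quotient once the kernels agree, and the only conventions to keep straight are that functions act on the right and that the subgroup $G_L$ returned by Lemma~\ref{lem:lift-group} need not contain the identity of $\St_R(L)$ --- which is harmless.
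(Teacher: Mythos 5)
Your proof is correct and follows essentially the same route as the paper: part (1) is the same substitution argument (writing an element of $H'$ as $u h_0$ with $u \in S^I$ via $\L$-equivalence to transfer agreement of $r_s$ and $r_{s'}$ from $H$ to $H'$), and part (2) lifts a subgroup of $\St_R(L)$ through one surjection $\St_R(L) \onto \Gamma_R(H_0)$ using Lemma~\ref{lem:lift-group} and then transports surjectivity to every other $\sH$-class via the equality of kernels. No gaps.
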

\begin{proof}
(1) Let $s, s' \in \St_R(L)$, and suppose that $r_s|_{H} = r_{s'}|_{H}$. Pick $x \in H$, and let $y \in H'$ be arbitrary. Since $y \sL x$, pick $t \in S^I$ such that $y = tx$. Then $ys = txs = txs' = ys'$. Thus, $r_s|_{H'} = r_s'|_{H'}$.

(2) By (1), if a subset $X \subseteq \St_R(L)$ is mapped onto $\Gamma_R(H)$ under the natural homomorphism for some ${\sH}$-class $H \subseteq L$, then in fact $X$ is mapped onto $\Gamma_R(H')$ under the natural homomorphism for every ${\sH}$-class $H' \subseteq L$. By Lemma~\ref{lem:lift-group}, there exists such a subset which is moreover a subgroup.
\end{proof}

The left Sch\"utzenberger group $\Gamma_L(H)$ is defined analogously to $\Gamma_R(H)$. There exists a function $\alpha$ such that, for any $x \in H$, $g \in \Gamma_R(H)$, $xg = (g\alpha)x$, and $\alpha$ is an anti-isomorphism from $\Gamma_R(H)$ to $\Gamma_L(H)$, cf.~\cite[Lemma~A.3.12]{RS2009}.

We call an element $s \in S$ an \emph{$\bH$-element} if the group $\Gamma_R(H_s)$ is in the variety $\bH$. In light of Lemma~\ref{lem:lift-Schutz-group}(1), any $\sL$-class either consists entirely of $\bH$-elements, or does not contain any $\bH$-elements.

We record two further important properties of stabilizers and Sch\"utzenberger groups that we will use in what follows.
\begin{lemma}\label{lem:one-for-all-right}
If $t \in S^I$, $x \in S$, and $xt \in H_x$, then $t \in \St_R(H_x)$.
\end{lemma}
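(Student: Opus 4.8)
The plan is to prove directly that $H_x t \subseteq H_x$, which by definition is exactly the assertion $t \in \St_R(H_x)$. So fix an arbitrary $y \in H_x$; since $H_y = H_x$, it suffices to show that $yt \R y$ and $yt \L y$, for then $yt \in H_y = H_x$ and we are done by arbitrariness of $y$.

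First I would unpack the hypothesis $xt \in H_x$ into two factorizations: from $xt \R x$ there is $z \in S^I$ with $xtz = x$, and from $xt \L x$ there is $a \in S^I$ with $xt = ax$. Since $y \L x$, also write $y = ux$ with $u \in S^I$. For the $\R$-relation, $yt \leqR y$ holds trivially, while
\[
y = ux = u(xtz) = (uxt)z = (yt)z
\]
shows $y \leqR yt$; hence $yt \R y$, and in particular $yt \J x$. For the $\L$-relation, the computation $yt = u(xt) = u(ax) = (ua)x$ shows $yt \leqL x$; combining $yt \J x$ with $yt \leqL x$ and invoking stability of finite semigroups upgrades this to $yt \L x$, and since $x \L y$ we conclude $yt \L y$. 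This completes the argument.

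The proof is short, and the only step needing a little care is the $\L$-direction: unlike the $\R$-direction it does not come from an explicit factorization of $y$ through $yt$, so rather than chasing witnesses I would appeal to stability to pass from the inequality $yt \leqL x$ to the equivalence $yt \L x$. The degenerate case $t = I$ is automatically covered, since then $H_x t = H_x$.
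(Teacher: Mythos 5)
Your proof is correct, but it is organized differently from the paper's. The paper's argument is a one-liner: for any $y \L x$, since $\L$ is a right congruence, $yt \L xt \L x$, so $t \in \St_R(L_x)$, and then it invokes the equality $\St_R(L_x) = \St_R(H_x)$ already recorded in the preliminaries (itself a consequence of stability). You instead verify $H_x t \subseteq H_x$ directly, checking both Green relations for each $y \in H_x$: the $\R$-half via the explicit witness $z$ with $xtz = x$, and the $\L$-half via $yt = (ua)x \leqL x$ upgraded to $yt \L x$ by stability. What your route buys is self-containedness --- you never need the preliminary fact $\St_R(H_x)=\St_R(L_x)$, effectively re-proving the relevant special case of it on the spot; what the paper's route buys is brevity, since the stability bookkeeping was done once and for all earlier. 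One small remark: your explicit appeal to stability in the $\L$-direction is avoidable. Since $x \L y$, write $x = vy$; then $xt = v(yt)$ gives $xt \leqL yt$, and as $xt \L x$ this yields $x \leqL yt$, which together with your inequality $yt \leqL x$ gives $yt \L x$ with no use of stability (this is just the statement that $\L$ is a right congruence, which is how the paper phrases it). Stability is genuinely needed only to pass from ``$t$ stabilizes $L_x$'' to ``$t$ stabilizes $H_x$'', which in your proof is exactly the $\R$-half you establish by hand.
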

\begin{proof}
Let $y \sL x$. Then  $yt \sL xt\sL x$, and so $t\in \St_R(L_x)=\St_R(H_x)$.
\end{proof}

The following is a reformulation of $\Gamma_R(H)$ having trivial point stabilizers.

\begin{lemma}\label{lem:trivial-point-stabilizers}
If $g, g' \in \Gamma_R(H)$ and, for some $x \in H$, $xg = xg'$, then $g = g'$.
\end{lemma}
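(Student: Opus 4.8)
The plan is to derive this immediately from the trivial-point-stabilizer property of $\Gamma_R(H)$ recalled just before Lemma~\ref{lem:lift-Schutz-group}, namely that if $g \in \Gamma_R(H)$ satisfies $xg = x$ for some $x \in H$, then $g$ is the unit of $\Gamma_R(H)$. The only additional ingredient is that $\Gamma_R(H)$ acts on $H$ as a \emph{group}, so that the right action satisfies $(xg)h = x(gh)$ for all $x \in H$ and $g, h \in \Gamma_R(H)$, and in particular inverses act as the inverse permutations.

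Concretely, I would argue as follows. Suppose $g, g' \in \Gamma_R(H)$ and $xg = xg'$ for some $x \in H$. Multiply on the right by $(g')^{-1}$, using that the action is a right group action:
\[
x\bigl(g(g')^{-1}\bigr) = (xg)(g')^{-1} = (xg')(g')^{-1} = x\bigl(g'(g')^{-1}\bigr) = x.
\]
Thus the element $g(g')^{-1} \in \Gamma_R(H)$ fixes the point $x \in H$, so by the trivial-point-stabilizer property it equals the unit of $\Gamma_R(H)$. Hence $g = g'$, as claimed.

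There is essentially no obstacle here: the statement is, as the text says, merely a reformulation of the fact that point stabilizers are trivial, rephrased so that it can be cited directly in later arguments where one wants to conclude equality of two Sch\"utzenberger-group elements from their having a common value on some $x \in H$. The only mild point of care is to invoke that the action in question is genuinely a group action (so that $(g')^{-1}$ makes sense and acts correctly), which is part of the definition of $\Gamma_R(H)$ as the faithful quotient of the $\St_R(H)$-action on $H$ recalled above.
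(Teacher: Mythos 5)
Your argument is correct and matches the paper's intent: the paper gives no explicit proof, merely noting that the lemma is a reformulation of the trivial-point-stabilizer property, and your cancellation argument via $g(g')^{-1}$ is exactly the routine verification being left implicit.
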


\section{Computing $\bHbar$-pointlikes}

\subsection*{Statement of the main result}
The main theorem of this paper, Theorem~\ref{thm:main}, gives a characterization of the $\bHbar$-pointlike subsets of a finite semigroup $T$, which is effective whenever the membership problem of $\bH$ is decidable. The following definition is crucial; it generalizes the definition of `$C^\omega$' for aperiodic pointlike sets \cite[Def.~3.4]{Hen1988} (also see `$\mathrm{Sat}$' in \cite[Sec.~4.2]{PlaZei2016FO} and \cite[Sec.~4]{GooSte2017merge}) and of `$\mathrm{CP}_\pi(T)$' for $\mathbf{G}_\pi$-pointlike sets \cite[Thm.~2.3]{HRS2010AP}. %
\begin{definition}\label{def:saturated}
Let $T$ be a finite semigroup. A subset $S$ of $2^T$ is \emph{$\bHbar$-saturated} if
\begin{enumerate}[label={(\roman*)}]
	\item $S$ is a subsemigroup,
	\item $S$ is closed downward in the inclusion order, and
	\item $S$ is closed under taking unions of $\bH$-kernels of subgroups, i.e., for any subgroup $G \subseteq S$, $\bigcup K_\bH(G) \in S$.
\end{enumerate}
Since an intersection of $\bHbar$-saturated subsets of $2^T$ is again $\bHbar$-saturated, any subset $U$ of $2^S$ is contained in a smallest $\bHbar$-saturated set, which we call its \emph{$\bHbar$-saturation}, and denote by $\Sat_{\bHbar}(U)$.
\end{definition}
Write $\eta \colon T \to 2^T$ for the injective homomorphism defined by $t\eta := \{t\}$.
Our main theorem, to be proved in Section~\ref{sec:mainproof}, is the following characterization of the collection of $\bHbar$-pointlike sets of a finite semigroup $T$.
\begin{theorem}\label{thm:main}
For any finite semigroup $T$, $\PL_{\bHbar}(T) = \Sat_{\bHbar}(T\eta)$. That is, the set of $\bHbar$-pointlike subsets of $T$ is the $\bHbar$-saturation of the set of singletons.
\end{theorem}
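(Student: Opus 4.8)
The plan is to prove the two inclusions separately.

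\medskip

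\noindent\textbf{The inclusion $\Sat_{\bHbar}(T\eta) \subseteq \PL_{\bHbar}(T)$.} This is the ``soundness'' direction and should follow from the facts already assembled in the preliminaries. Recall that $\PL_{\bHbar}(T)$ is a downward-closed subsemigroup of $2^T$ containing all singletons (so it contains $T\eta$), and by Proposition~\ref{prop:H-kernel-pointlike} together with the monadic property (Proposition~\ref{prop:pointlike-monad}) it is also closed under taking unions of $\bH$-kernels of its subgroups: if $G \subseteq \PL_{\bHbar}(T)$ is a subgroup, then $K_\bH(G)$ is an $\bHbar$-pointlike subset of $\PL_{\bHbar}(T)$ by Proposition~\ref{prop:H-kernel-pointlike}, hence $\bigcup K_\bH(G)$ is $\bHbar$-pointlike in $T$ by Proposition~\ref{prop:pointlike-monad}. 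Thus $\PL_{\bHbar}(T)$ is an $\bHbar$-saturated subset of $2^T$ containing $T\eta$, and therefore contains the smallest such set, $\Sat_{\bHbar}(T\eta)$.

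\medskip

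\noindent\textbf{The inclusion $\PL_{\bHbar}(T) \subseteq \Sat_{\bHbar}(T\eta)$.} This is the substantial half, and is exactly what the later sections of the paper are designed to establish. The strategy is the one indicated in the introduction: construct a ``universal'' relational morphism $\phi \colon T \relto U$ witnessing that every set \emph{not} in $\Sat_{\bHbar}(T\eta)$ fails to be pointlike. Concretely, I would use Proposition~\ref{prop:pointlike-flow}: it suffices to exhibit, for each finite semigroup $T$ generated by some $A$, a $\bHbar$-automaton $\A$ and a $T$-flow $\Phi$ on $\A$ such that every set of the form $q\Phi$ lies in $\Sat_{\bHbar}(T\eta)$ (here one takes $A = T$ as generators, or any generating set, and keeps track that $\Sat$ is computed in $2^T$). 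Given such a flow, if $X \subseteq T$ is $\bHbar$-pointlike then $X \subseteq q\Phi$ for some state $q$, and since $q\Phi \in \Sat_{\bHbar}(T\eta)$ and $\Sat_{\bHbar}(T\eta)$ is downward closed, $X \in \Sat_{\bHbar}(T\eta)$. The construction of $\A$ and $\Phi$ is a Henckell-style construction: the states of $\A$ should be (a quotient of) the power semigroup $\Sat_{\bHbar}(T\eta)$ acted on by right multiplication by generators, with $\Phi$ sending a state $S$ to itself (viewed as a subset of $T^I$), suitably unitalized so that $I \in i\Phi$. Verifying that this $\Phi$ is a flow is the easy bookkeeping part.

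\medskip

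\noindent\textbf{The main obstacle.} The crux — and the reason the paper needs an entire technical section — is showing that the transition semigroup $\T_\A$ of the constructed automaton actually lies in $\bHbar$, i.e., that all of its subgroups lie in $\bH$. Equivalently, one must show no subgroup of $\T_\A$ has an image outside $\bH$; this is where the closure condition (iii) in Definition~\ref{def:saturated} — closure under unions of $\bH$-kernels of subgroups — is exactly calibrated to ``kill'' the offending part of any Schützenberger group. Following the introduction, the argument requires working with $\bH$-kernels of Schützenberger groups of $\sH$-classes inside the power semigroup and analyzing orbits of these kernels; the lemmas on lifting Schützenberger groups (Lemma~\ref{lem:lift-Schutz-group}), on preservation of $\bH$-kernels under quotients (Lemma~\ref{lem:preserve-H-kernels}), and on trivial point stabilizers (Lemma~\ref{lem:trivial-point-stabilizers}) are the tools. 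I expect this aperiodicity-type verification, generalized from ``no prime-length cycles'' to ``no orbit of an $\bH$-kernel of a subgroup outside $\bH$,'' to be the real work, and I would defer its details to the dedicated section rather than attempt it here. Finally, effectiveness: when $\bH$ has decidable membership, the $\bH$-kernel of a finite group is computable (as noted after the definition of $\bH$-kernel), so $\Sat_{\bHbar}(T\eta)$ is computable by a straightforward closure algorithm, giving decidability of $\bHbar$-pointlikes and hence, via Almeida's reformulation, of the separation problem.
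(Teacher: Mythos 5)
Your first inclusion is exactly the paper's argument: $\PL_{\bHbar}(T)$ is $\bHbar$-saturated by Propositions~\ref{prop:pointlike-monad} and~\ref{prop:H-kernel-pointlike} and contains $T\eta$, hence contains $\Sat_{\bHbar}(T\eta)$. Your reduction of the hard inclusion via Proposition~\ref{prop:pointlike-flow} (find a $T$-flow on an $\bHbar$-automaton whose state values lie in $\Sat_{\bHbar}(T\eta)$, then use downward closure) also matches the paper. The gap is in the one place where the theorem actually lives: the automaton you sketch does not work, and the one that does is neither constructed nor verified. If the states are $\Sat_{\bHbar}(T\eta)$ (unitalized) acted on by right multiplication by the generators $t\in T$, then each generator acts on the singleton states by $\{s\}\mapsto\{st\}$, i.e., as the right regular representation of $T$; this action is faithful on every subgroup of $T$ (distinct $g,g'$ in a subgroup with identity $e$ are separated by the state $\{e\}$), so the transition semigroup contains isomorphic copies of all subgroups of $T$ and is an $\bHbar$-automaton only when $T$ already lies in $\bHbar$ --- precisely the case where the theorem is vacuous. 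The hedge ``(a quotient of)'' does not repair this, and the closure condition (iii) of Definition~\ref{def:saturated} does not by itself kill these subgroups; one has to change the state set and the action.

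What the paper does instead is the content you defer: the states are the strict $\sL$-chains of $\bH$-elements of $S=\Sat_{\bHbar}(T\eta)$; a blowup operator $b$ on $S$, built from unions of $\bH$-kernels of lifted Sch\"utzenberger groups (Propositions~\ref{prop:preblowup-exists} and~\ref{prop:blowup-exists}, which is where saturation condition (iii) is used), is extended to chains via the operator $B$; the transition of $t$ multiplies the chain diagonally by $t$, appends $t$, blows up, and reduces (Definition~\ref{def:tau}); and the flow reads the last letter of the chain, which lies in $S$ by construction (Proposition~\ref{prop:Phi-is-flow}). The proof that the resulting transition semigroup lies in $\bHbar$ is then the technical heart: one embeds it into the transducer semigroup $S^{\bH}$, and uses the Zeiger property together with a homomorphism from a subgroup of $S^{\bH}$ into a Sch\"utzenberger group of $S$ to show every $\bH$-kernel of a subgroup acts trivially (Lemma~\ref{lem:Zeiger}, Proposition~\ref{prop:hom-to-Gamma}, Theorem~\ref{thm:SH-in-Hbar}). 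Since your proposal supplies neither a correct construction nor this verification, the inclusion $\PL_{\bHbar}(T)\subseteq\Sat_{\bHbar}(T\eta)$ is not established; only the formal frame around it is.
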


%The rest of this paper is devoted to proving the converse, i.e., that every $\bHbar$-pointlike subset of $T$ lies in the $\bHbar$-saturation of $T\eta$.

For the rest of the paper, we will fix the finite semigroup $T$, and we will also fix $S := \Sat_{\bHbar}(T\eta)$. %, and we will mostly regard $S$ as an abstract semigroup; on the occasions where we use that $S$ is in fact an $\bHbar$-saturated subsemigroup of $2^T$, we will say so explicitly.%, a downward closed $\bHbar$-saturated subsemigroup of $2^T$.
The difficult direction of the proof is to show that any $\bHbar$-pointlike subset of $T$ lies in $S$. The proof of this direction will consist of two main parts: in Section~\ref{sec:blowup-flow}, we construct a $T$-flow $\Phi$ on an automaton $\A$ with alphabet $T$. Then, in Section~\ref{sec:in-Hbar}, we prove that $\A$ is an $\bHbar$-automaton. Finally, in Section~\ref{sec:mainproof} we prove Theorem~\ref{thm:main}.

\subsection*{Applications}
We state some applications of Theorem~\ref{thm:main} here.
If $\bH$ has a decidable membership problem, then one can compute the $\bH$-kernel of any finite group and hence one can effectively test if a subsemigroup of $2^T$ is $\bHbar$-saturated.  Thus one can effectively find $\Sat_{\bHbar}(T\eta)$ under this hypothesis.  Therefore, Theorem~\ref{thm:main} has the following corollary.

\begin{corollary}\label{cor:is-dec}
Let $\bH$ be a variety of finite groups.  Then the following are equivalent.
\begin{enumerate}
  \item $\bH$ has decidable membership.
  \item $\bHbar$ has decidable membership.
  \item $\bHbar$-pointlikes are computable.
\end{enumerate}
\end{corollary}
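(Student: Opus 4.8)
The plan is to prove the cyclic chain of implications $(3)\Rightarrow(2)\Rightarrow(1)\Rightarrow(3)$, of which only the last uses Theorem~\ref{thm:main}; the other two hold for essentially formal reasons and I would dispatch them first.

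For $(3)\Rightarrow(2)$, recall (as observed in the introduction) that a finite semigroup $S$ lies in $\bHbar$ if and only if every $\bHbar$-pointlike subset of $S$ is a singleton — the forward direction uses the identity relational morphism $S\relto S$, and the converse follows by forming a subdirect product of the semigroups in $\bHbar$ witnessing that the various two-element subsets of $S$ fail to be $\bHbar$-pointlike. Hence an algorithm computing $\PL_{\bHbar}(S)$ decides membership in $\bHbar$, by testing whether $\PL_{\bHbar}(S)=S\eta$. For $(2)\Rightarrow(1)$, since the variety $\bH$ is closed under subgroups, a finite group $G$ lies in $\bH$ if and only if it lies in $\bHbar$ (indeed $G$ is among its own subgroups), so the membership problem for $\bH$ reduces to that for $\bHbar$.

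The substantive implication is $(1)\Rightarrow(3)$, and it is here that Theorem~\ref{thm:main} does the work. Assuming $\bH$ has decidable membership, first note (as recorded in Section~\ref{sec:prelim}) that this is equivalent to the computability of the $\bH$-kernel of an arbitrary finite group $G$: enumerate the finitely many normal subgroups $N\trianglelefteq G$, test $G/N\in\bH$, and return the smallest such $N$. Now fix a finite semigroup $T$. By Theorem~\ref{thm:main} we have $\PL_{\bHbar}(T)=\Sat_{\bHbar}(T\eta)$, so it suffices to compute the $\bHbar$-saturation of the set of singletons inside the finite semigroup $2^T$. I would do this by the evident closure procedure: starting from $T\eta$, repeatedly (i) close under the multiplication of $2^T$, (ii) close downward in the inclusion order, and (iii) for each subgroup $G$ of the current set, adjoin the element $\bigcup K_\bH(G)$ of $2^T$. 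Each operation is effective — step (iii) because the finitely many subgroups of $2^T$ can be enumerated and their $\bH$-kernels computed by the previous paragraph — and since $2^T$ is finite the process stabilizes in finitely many rounds, its output being exactly $\Sat_{\bHbar}(T\eta)$ by Definition~\ref{def:saturated}. One then tests whether a given $X\subseteq T$ is $\bHbar$-pointlike by checking $X\in\PL_{\bHbar}(T)$.

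The only genuine obstacle is Theorem~\ref{thm:main} itself; once it is in hand, this corollary is a routine unwinding of effectiveness, and I do not anticipate further difficulty. (The naive closure procedure above is of course far from optimal; more efficient descriptions of $\PL_{\bHbar}(T)$ are taken up in the final section.)
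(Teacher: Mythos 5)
Your proposal is correct and takes essentially the same route as the paper: the substantive step is exactly the paper's argument that decidable membership in $\bH$ makes $\bH$-kernels of finite groups computable, so that $\Sat_{\bHbar}(T\eta)=\PL_{\bHbar}(T)$ (Theorem~\ref{thm:main}) can be computed by an effective closure procedure inside the finite semigroup $2^T$. The remaining implications $(3)\Rightarrow(2)\Rightarrow(1)$ are the same standard reductions the paper leaves implicit (membership reduces to pointlikes being singletons, and a finite group lies in $\bH$ iff it lies in $\bHbar$), and your justifications of them are sound.
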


This implies Henckell's result on the decidability of aperiodic pointlikes~\cite{Hen1988} by letting $\bH$ be the variety containing only the trivial group.  If $\pi$ is a set of primes, then $\mathbf G_{\pi}$ denotes the variety of finite groups whose order is divisible only by primes in $\pi$.  If $\pi=\emptyset$, then $\mathbf G_{\pi}$ is the trivial variety.  Thus varieties of the form $\overline{\mathbf G_{\pi}}$ include the variety of aperiodic semigroups.  Clearly, $\mathbf G_{\pi}$ has decidable membership if and only if $\pi$ is recursive.  Thus Corollary~\ref{cor:is-dec} implies the main result of~\cite{HRS2010} that $\overline{\mathbf G_{\pi}}$ has decidable pointlikes if $\pi$ is recursive. Let $\mathbf G_{\mathrm{sol}}$ denote the variety of finite solvable groups.

\begin{corollary}\label{c:Gsol}
The variety $\overline{\mathbf G_{\mathrm{sol}}}$ has decidable pointlikes.
\end{corollary}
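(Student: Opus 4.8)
The plan is to deduce Corollary~\ref{c:Gsol} directly from Corollary~\ref{cor:is-dec}, so the only thing to verify is that the variety $\mathbf G_{\mathrm{sol}}$ of finite solvable groups has decidable membership. Once that is established, taking $\bH = \mathbf G_{\mathrm{sol}}$ in Corollary~\ref{cor:is-dec}(1)$\Rightarrow$(3) yields that $\overline{\mathbf G_{\mathrm{sol}}}$-pointlikes are computable, which is exactly the assertion.

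So the substance reduces to: \emph{given a finite group $G$ (say, by its multiplication table, or by generators inside a known finite group), decide whether $G$ is solvable}. First I would recall that solvability is an intrinsic, finitely checkable property: one can compute the derived series $G \geq G' \geq G'' \geq \cdots$ by iterating the commutator subgroup operation, $G^{(k+1)} = [G^{(k)}, G^{(k)}]$, which is generated by the finitely many commutators $[x,y]$ with $x,y \in G^{(k)}$ and hence is effectively computable from a list of the elements of $G^{(k)}$. Since $G$ is finite and the series is weakly decreasing, it stabilizes after at most $\log_2 |G|$ steps; $G$ is solvable precisely when the series reaches the trivial subgroup. This is a terminating algorithm, so $\mathbf G_{\mathrm{sol}}$ has decidable membership. (Alternatively one could invoke the Feit--Thompson theorem or Burnside's $p^aq^b$ theorem, but the derived-series argument is elementary and self-contained.)

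There is essentially no obstacle here: the corollary is a pure instantiation of the previous corollary. The only point worth a sentence is that $\mathbf G_{\mathrm{sol}}$ is indeed a variety of finite groups --- it is closed under finite products (a product of solvable groups is solvable), subgroups (a subgroup of a solvable group is solvable), and quotients (a quotient of a solvable group is solvable) --- so that it falls within the scope of Corollary~\ref{cor:is-dec}. With that observed, applying Corollary~\ref{cor:is-dec} to $\bH = \mathbf G_{\mathrm{sol}}$ completes the proof.
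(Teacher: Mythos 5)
Your proposal is correct and matches the paper's intent: the corollary is stated as an immediate instantiation of Corollary~\ref{cor:is-dec} with $\bH = \mathbf G_{\mathrm{sol}}$, the only content being that solvability of a finite group is decidable (e.g., by computing the derived series), which the paper leaves implicit and you verify explicitly.
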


The second author showed in~\cite{Slice} that if $\mathbf V$ has decidable pointlikes and $\mathbf W$ is locally finite with computable free objects (e.g., if $\mathbf W$ is generated by a finite semigroup), then the join $\mathbf V\vee \mathbf W$ has decidable membership; see also~\cite{Almeida99}.

\begin{corollary}\label{c:join}
If $\mathbf H$ is a decidable variety of finite groups and $\mathbf W$ is a locally finite variety of finite semigroups with computable free objects, then $\bHbar\vee \mathbf W$ is decidable.
\end{corollary}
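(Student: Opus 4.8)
The plan is to reduce the decidability of membership in $\bHbar \vee \mathbf{W}$ to the decidability of $\bHbar$-pointlikes (which we have by Corollary~\ref{cor:is-dec}) combined with the computability of free objects in $\mathbf{W}$. First I would recall the standard criterion, due to the second author in~\cite{Slice} and also treated in~\cite{Almeida99}: if $\mathbf{V}$ has decidable pointlikes and $\mathbf{W}$ is locally finite with computable free objects, then $\mathbf{V} \vee \mathbf{W}$ has decidable membership. So strictly speaking the corollary is an immediate consequence of Corollary~\ref{cor:is-dec} together with this cited result, applied with $\mathbf{V} = \bHbar$; the real content is just to verify the hypotheses are met. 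Thus the write-up can be a short paragraph, but I would also sketch why the criterion holds so the argument is self-contained.

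The key steps, spelled out, are as follows. Given a finite semigroup $U$ generated by a finite set $A$, one wants to decide whether $U \in \bHbar \vee \mathbf{W}$, equivalently whether the canonical projection $\widehat{A^+} \onto U$ factors through $\widehat{F}_{\bHbar \vee \mathbf{W}}(A) = \widehat{F}_{\bHbar}(A) \times \widehat{F}_{\mathbf{W}}(A)$. Since $\mathbf{W}$ is locally finite, $\widehat{F}_{\mathbf{W}}(A) = F_{\mathbf{W}}(A)$ is a finite semigroup, computable by hypothesis; let $\pi_{\mathbf{W}} \colon A^+ \onto F_{\mathbf{W}}(A)$ be the projection. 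The membership $U \in \bHbar \vee \mathbf{W}$ holds if and only if $U$ divides a semigroup in $\bHbar \times \mathbf{W}$, which one shows is equivalent to: for the relational morphism $\rho \colon U \relto F_{\mathbf{W}}(A)$ given by $u\rho := \{w\pi_{\mathbf{W}} \mid w \in A^+,\ w_U = u\}$, the subset $u\rho^{-1} \cdot$(suitably interpreted) — more precisely, one forms the `derived' data and checks that for each $f \in F_{\mathbf{W}}(A)$ the preimage $f\rho^{-1} \subseteq U$ is $\bHbar$-pointlike in a quotient; this is exactly where decidability of $\bHbar$-pointlikes enters. I would follow the clean formulation: $U \in \bHbar \vee \mathbf{W}$ iff the image of $U$ in the quotient by the congruence generated by identifying elements with the same $F_{\mathbf{W}}(A)$-component lies, together with the pointlike structure, inside $\bHbar$; concretely one tests, for each $\sim_{\mathbf{W}}$-class, whether the set of $U$-elements it contains is $\bHbar$-pointlike and whether the resulting quotient construction yields a semigroup in $\bHbar$.

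Since the paper does not develop the join machinery in detail, the cleanest route for the write-up is: cite~\cite{Slice,Almeida99} for the reduction ``decidable pointlikes for $\mathbf{V}$ plus locally finite $\mathbf{W}$ with computable free objects implies $\mathbf{V} \vee \mathbf{W}$ decidable,'' observe that $\bHbar$ has decidable pointlikes by Corollary~\ref{cor:is-dec} whenever $\mathbf{H}$ is decidable, note that $\mathbf{W}$ satisfies the remaining hypotheses by assumption, and conclude. I expect the main (really the only) obstacle to be bookkeeping: one must make sure the cited join criterion is quoted with exactly the right hypotheses — ``locally finite with computable free objects'' rather than merely ``locally finite'' — since without computability of $F_{\mathbf{W}}(A)$ the algorithm has nothing to compute with; and one must confirm that decidability of $\bHbar$-membership (part (2) of Corollary~\ref{cor:is-dec}) is subsumed, as it must be, since $\bHbar \vee \mathbf{W} \supseteq \bHbar$. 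Given that everything needed is already in place, the proof is genuinely a one-line deduction, and I would present it as such with the appropriate citation.

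\begin{proof}
By Corollary~\ref{cor:is-dec}, if $\mathbf H$ has decidable membership then $\bHbar$ has decidable pointlikes. By the result of the second author~\cite{Slice} (see also~\cite{Almeida99}), if $\mathbf V$ has decidable pointlikes and $\mathbf W$ is locally finite with computable free objects, then $\mathbf V\vee \mathbf W$ has decidable membership. Applying this with $\mathbf V=\bHbar$ yields that $\bHbar\vee\mathbf W$ is decidable.
\end{proof}
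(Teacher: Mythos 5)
Your proof is correct and is exactly the paper's (implicit) argument: the paper states the reduction from~\cite{Slice,Almeida99} immediately before the corollary and derives it from Corollary~\ref{cor:is-dec} in precisely this one-line fashion. The extra sketch of the join criterion is unnecessary (and a bit vague), but the final cited deduction is what the paper intends.
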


Recall that if $\mathbf V$ and $\mathbf W$ are varieties of finite semigroups, then their \emph{Mal'cev product} $\mathbf V\mbox{\textcircled{\footnotesize{$m$}}} \mathbf W$ is the variety consisting of all finite semigroups $S$ admitting a relational morphism $\varphi\colon S\relto T$ with $T\in \mathbf W$ and $e\varphi^{-1}\in \mathbf V$ for all idempotents $e\in T$.   Let $\mathbf A$ denote the variety of aperiodic semigroups. Henckell proved in~\cite{Henckell04} (see also~\cite{HRS2010} for a simpler proof) that if $\mathbf A\mbox{\textcircled{\footnotesize{$m$}}}\mathbf W=\mathbf W$ (or, equivalently, the $\mathbf W$-recognizable languages are closed under concatenation~\cite{Str79}) and $\mathbf W$ has decidable pointlikes, then $\mathbf V\mbox{\textcircled{\footnotesize{$m$}}}\mathbf W$ has decidable membership for any variety of finite semigroups $\mathbf V$ with decidable membership problem.  Since $\mathbf A\mbox{\textcircled{\footnotesize{$m$}}}\bHbar=\bHbar$, we obtain the following.

\begin{corollary}\label{c:malcev}
Let $\mathbf H$ be a decidable variety of finite groups and $\mathbf V$ a decidable variety of finite semigroups.  Then $\mathbf V\mbox{\textcircled{\footnotesize{$m$}}}\bHbar$ has decidable membership.
\end{corollary}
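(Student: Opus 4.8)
The plan is to deduce the corollary from Corollary~\ref{cor:is-dec} together with the theorem of Henckell on Mal'cev products quoted just above. Since $\mathbf H$ is decidable, Corollary~\ref{cor:is-dec} gives that $\bHbar$ has decidable (indeed computable) pointlike sets, so by Henckell's theorem~\cite{Henckell04,HRS2010} it suffices to establish the single identity $\mathbf A\mbox{\textcircled{\footnotesize{$m$}}}\bHbar = \bHbar$; the decidability of $\mathbf V\mbox{\textcircled{\footnotesize{$m$}}}\bHbar$ for every decidable variety of finite semigroups $\mathbf V$ then follows immediately.

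To prove the identity, the inclusion $\bHbar \subseteq \mathbf A\mbox{\textcircled{\footnotesize{$m$}}}\bHbar$ comes for free from the identity relational morphism $\id\colon S\relto S$, whose idempotent preimages are singletons and hence aperiodic. For the reverse inclusion I would take $S \in \mathbf A\mbox{\textcircled{\footnotesize{$m$}}}\bHbar$, witnessed by a relational morphism $\phi\colon S\relto T$ with $T\in\bHbar$ and $e\phi^{-1}$ aperiodic for every idempotent $e\in T$, and show that an arbitrary subgroup $G\leq S$ lies in $\bH$. The key step is a lifting: applying Lemma~\ref{lem:lift-group} to $p_S\colon \#\phi\cap(G\times T)\onto G$ yields a subgroup $H\leq\#\phi$ with $Hp_S = G$, and then $Hp_T\leq T\in\bHbar$ forces $Hp_T\in\bH$. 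Writing $u$ for the identity of $Hp_T$ (an idempotent of $T$) and $N$ for the kernel of $p_T|_H\colon H\onto Hp_T$, the isomorphism $H/N\cong Hp_T\in\bH$ gives $K_\bH(H)\subseteq N$, while $N$ is carried by $p_S$ into the aperiodic subsemigroup $u\phi^{-1}$ of $S$; since a subgroup of an aperiodic semigroup is trivial, $Np_S$ collapses to the identity $\{f\}$ of $G$. Finally Lemma~\ref{lem:preserve-H-kernels}, applied to the surjection $p_S|_H\colon H\onto G$, gives $K_\bH(G) = K_\bH(H)p_S \subseteq Np_S = \{f\}$, so $K_\bH(G)$ is trivial and $G\in\bH$. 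As $G$ was an arbitrary subgroup, $S\in\bHbar$.

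I do not expect a serious obstacle: all the genuine difficulty of the paper is concentrated in Theorem~\ref{thm:main} and Corollary~\ref{cor:is-dec}, and the present statement is a formal consequence of those together with Henckell's black-box result on Mal'cev products. The one place that needs a little care is that $p_S$ need not be injective on the lifted group $H$, so one cannot conclude that $N$ itself is trivial, only that its image $Np_S$ is; this is why I route the final computation through $\bH$-kernels and Lemma~\ref{lem:preserve-H-kernels} rather than through $N$ directly. One should also invoke, at the aperiodicity step, the standard fact that a finite aperiodic semigroup has no nontrivial subgroup, and note that $u\phi^{-1}$ really is a subsemigroup because $u$ is idempotent.
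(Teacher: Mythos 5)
Your proposal is correct and takes essentially the same route as the paper: Corollary~\ref{cor:is-dec} supplies computable $\bHbar$-pointlikes, and the corollary then follows from Henckell's Mal'cev product theorem together with the equality $\mathbf A\mbox{\textcircled{\footnotesize{$m$}}}\bHbar=\bHbar$. The only difference is that the paper simply asserts this equality as standard, while you verify it (correctly) by lifting a subgroup through the relational morphism via Lemma~\ref{lem:lift-group}, using aperiodicity of the idempotent preimage to kill the image of the kernel, and concluding with Lemma~\ref{lem:preserve-H-kernels}.
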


Finally, we mention the separation problem.  If $\mathbf V$ is a variety of finite semigroups, then the \emph{separation problem} for $\mathbf V$ asks for an algorithm which, given two disjoint regular languages $L_1,L_2$ over an alphabet $A$, decides whether there is a $\mathbf V$-recognizable language $L\subseteq A^+$ with $L_1\subseteq L$ and $L\cap L_2=\emptyset$ (i.e., $L_2\subseteq A^+\setminus L$); we call $L$ a \emph{$\mathbf V$-separator}.  Clearly, there is a $\mathbf V$-separator for a regular language and its complement if and only if the language is $\mathbf V$-recognizable and hence decidability of the separation problem for $\mathbf V$ implies decidability of the membership problem for $\mathbf V$.  It follows from a result of Almeida~\cite{Almeida99} that the separation problem for $\mathbf V$ is equivalent to the decidability of $\mathbf V$-pointlike pairs.

\begin{corollary}\label{c:sep}
If $\mathbf H$ is a decidable variety of finite groups, then the separation problem for $\bHbar$ is decidable.
\end{corollary}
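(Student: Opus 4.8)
The plan is to derive Corollary~\ref{c:sep} directly from the already-established fact (Corollary~\ref{cor:is-dec}, together with Theorem~\ref{thm:main}) that $\bHbar$-pointlikes are computable when $\bH$ is decidable, using the equivalence between the separation problem and decidability of pointlike pairs. First I would recall the setup: we are given two disjoint regular languages $L_1, L_2 \subseteq A^+$, presented effectively, say by finite automata or by recognizing morphisms. Let $\mu_i \colon A^+ \to T_i$ be a morphism recognizing $L_i$ with $L_i = P_i\mu_i^{-1}$ for some $P_i \subseteq T_i$, and form the product morphism $\mu = (\mu_1,\mu_2)\colon A^+ \to T_1\times T_2$. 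Set $T := (A^+)\mu$, the image subsemigroup, which is computable, and write $p_i\colon T \to T_i$ for the two projections.

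The key observation I would make is the standard dictionary: a $\bHbar$-separator for $L_1,L_2$ exists if and only if there is \emph{no} pair $(t_1,t_2) \in P_1 \times P_2$ such that the two-element set $\{(t_1,t_1'), (t_2',t_2)\}$—more precisely, the relevant two-element subset of $T$ witnessing that an element of $L_1$ and an element of $L_2$ cannot be told apart—is $\bHbar$-pointlike. Concretely, following Almeida~\cite{Almeida99}: $L_1$ and $L_2$ are \emph{not} $\bHbar$-separable precisely when there exist $s_1 \in L_1\mu = P_1 \cap (A^+)\mu_1$-part and $s_2 \in L_2$-part lying in a common element, i.e.\ when some subset $X \subseteq T$ with $Xp_1 \cap P_1 \neq \emptyset$ and $Xp_2 \cap P_2 \neq \emptyset$ is $\bHbar$-pointlike; equivalently, when the two-element subsets $\{u_1,u_2\}$ with $u_1 \in P_1\times T_2$, $u_2 \in T_1\times P_2$, $u_i \in T$, that are $\bHbar$-pointlike in $T$ actually arise. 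The point is that this condition is a finite disjunction over pairs of elements of the finite semigroup $T$, each disjunct asking whether a specific two-element (hence finite) subset of $T$ is $\bHbar$-pointlike.

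Having reduced to finitely many instances of ``is this subset of $T$ $\bHbar$-pointlike?'', I would invoke Corollary~\ref{cor:is-dec}: since $\bH$ is a decidable variety of finite groups, $\bHbar$-pointlikes are computable, so each of these finitely many membership queries is decidable. Hence the whole separability question is decidable: compute $T$ and the projections from the given automata for $L_1, L_2$; enumerate the finitely many candidate subsets $X\subseteq T$ meeting both target sets; for each, test $X \in \PL_{\bHbar}(T)$ using the algorithm from Corollary~\ref{cor:is-dec}; answer ``separable'' iff no such $X$ is pointlike.

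The main obstacle—and the only real content beyond citing Corollary~\ref{cor:is-dec}—is pinning down the precise form of the Almeida reduction so that non-separability is expressed exactly as ``some finite subset of the computable semigroup $T$ is $\bHbar$-pointlike'', i.e.\ verifying that it suffices to test two-element subsets (or at worst all subsets of the finite $T$, which is still a finite search) rather than some infinitary condition, and checking that the $T$ one must feed to the pointlike algorithm is genuinely constructible from the input automata. Since $T$ is a subsemigroup of the finite monoid $T_1\times T_2$ and can be generated by $\{a\mu \mid a \in A\}$, its multiplication table is computable, so this last point is routine; the bulk of the argument is simply transporting Almeida's theorem into the current notation, after which Corollary~\ref{cor:is-dec} finishes the proof.
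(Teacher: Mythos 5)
Your proposal is correct and follows the same route as the paper: the paper likewise treats Corollary~\ref{c:sep} as an immediate consequence of Almeida's equivalence between the $\mathbf V$-separation problem and decidability of $\mathbf V$-pointlike pairs, combined with the computability of $\bHbar$-pointlikes from Corollary~\ref{cor:is-dec}. Your extra unpacking of the reduction (passing to the image $T$ of the product morphism and testing the finitely many two-element subsets meeting both $L_1\mu$ and $L_2\mu$) is a correct elaboration of what the paper simply delegates to the citation of Almeida, despite the momentarily garbled phrasing of the candidate pairs before you restate the condition correctly.
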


The separation problem for $\bHbar$ often has a logical interpretation.  The regular languages are precisely the languages definable in second order monadic logic by B\"uchi's theorem~\cite{Str1994}.  The Sch\"utzenberger-McNaughton-Pappert theorem shows that the first order definable languages are precisely the $\mathbf A$-recognizable languages (see~\cite{Str1994}).  Thus Henckell's theorem on the decidability of aperiodic pointlikes implies that it is decidable if the languages defined by two second order monadic formulas can be separated by a first order definable language.  Corollary~\ref{c:Gsol} admits a similar logical reinterpretation.  If $\phi$ is a first order formula with free variable $x$, then the modular quantifier $\exists^{(r,n)}x\phi(x)$ is defined to be true for a word $w$ if the number of positions $j$ in $w$ for which $\phi(j)$ is true is congruent to $r$ modulo $n$.  More generally, if $\phi$ is a formula using first order and modular quantifiers, with a single free variable $x$, then  $\exists^{(r,n)}x\phi(x)$ is defined in a similar fashion.

Straubing proved that the languages definable in first order logic with modular quantifiers are precisely the $\overline{\mathbf G_{\mathrm{sol}}}$-languages; see~\cite{Str1994}.

\begin{corollary}\label{c:mod.quant}
The separation problem is decidable for first order logic with modular quantifiers.
\end{corollary}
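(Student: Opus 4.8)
The plan is to deduce this immediately from Corollary~\ref{c:sep} together with the theorem of Straubing quoted just above.

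First I would check that $\mathbf G_{\mathrm{sol}}$ is a decidable variety of finite groups: given a finite group $G$, one computes the derived series $G \geq G' \geq G'' \geq \cdots$, which stabilizes after finitely many steps, and tests whether it reaches the trivial subgroup; this decides whether $G \in \mathbf G_{\mathrm{sol}}$. Hence Corollary~\ref{c:sep}, applied with $\mathbf H = \mathbf G_{\mathrm{sol}}$, yields that the separation problem for $\overline{\mathbf G_{\mathrm{sol}}}$ is decidable.

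Next I would translate the logical statement into this algebraic one. By Straubing's theorem, a language $L \subseteq A^+$ is definable in first order logic with modular quantifiers if and only if $L$ is $\overline{\mathbf G_{\mathrm{sol}}}$-recognizable. Consequently, for disjoint regular languages $L_1, L_2 \subseteq A^+$, there exists a separator definable in first order logic with modular quantifiers if and only if there exists an $\overline{\mathbf G_{\mathrm{sol}}}$-recognizable separator. In other words, the separation problem for first order logic with modular quantifiers is literally the separation problem for $\overline{\mathbf G_{\mathrm{sol}}}$, and the latter is decidable by the previous paragraph.

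There is no substantive obstacle here; the corollary is a direct consequence of Corollary~\ref{c:sep} and Straubing's algebraic characterization. The only points requiring (routine) care are the observation that solvability of a finite group is algorithmically decidable and the bookkeeping that identifies the separation problem for a logic with the separation problem for the variety of languages defined by that logic.
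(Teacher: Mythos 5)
Your proposal is correct and matches the paper's (implicit) argument: the corollary follows from Corollary~\ref{c:sep} applied to the decidable variety $\mathbf G_{\mathrm{sol}}$, combined with Straubing's theorem identifying the languages definable in first order logic with modular quantifiers with the $\overline{\mathbf G_{\mathrm{sol}}}$-recognizable languages. The routine checks you mention (decidability of solvability via the derived series, and the translation between the logical and algebraic separation problems) are exactly the points the paper leaves to the reader.
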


If $\pi$ is a set of primes, then the $\overline{\mathrm G_{\pi}\cap \mathrm{G_{\mathrm{sol}}}}$ has an analogous  logical interpretation, but one is restricted to modular quantifiers whose moduli are divisible only by primes in $\pi$, cf.~\cite{Str1994}.  Thus, for any recursive set of primes $\pi$, we can decide the separation problem for the fragment of first order logic with modular quantifiers where the moduli are divisible only by primes in $\pi$.

\section{Construction of automaton and flow}\label{sec:blowup-flow}
The aim of this section is to use the semigroup $S = \Sat_{\bHbar}(T\eta)$ to construct an automaton $\A$, and a $T$-flow $\Phi$ on $\A$. We will then prove in the next section that the automaton $\A$ that we construct here is in fact an $\bHbar$-automaton.

\subsection*{$\L$-chains, strict $\L$-chains, $\bH$-elements}
Throughout this section and the next, we will work with the free monoid $S^*$ over $S$, and we identify $T$ with its image in $S$ under $\eta$. To avoid confusion with the multiplication operations in $S$ and $T$, we write words in $S^*$ as row vectors $\q = (q_n,\dots,q_1)$, where $q_i \in S$ for each $1 \leq i \leq n$. If $s \in S$, we write $s$ instead of $(s)$ for a one-element vector. We denote the concatenation of two words $\q, \q' \in S^*$ by $\q \cdot \q'$. Words in $S^*$ are read \emph{from right to left}, so that $q_1$ is the \emph{first} and $q_n$ is the \emph{last} letter of $\q = (q_n,\dots,q_1)$.
We define the operation `last letter', $\omega \colon S^+ \to S$, by $(q_n,\dots,q_1)\omega := q_n$.

An \emph{$\L$-chain} (or \emph{flag}) in $S$ is a (possibly empty) finite word $\vec{q} = (q_n,\dots,q_1) \in S^*$ such that $q_{i+1} \leqL q_i$ for every $1 \leq i < n$. The set of $\L$-chains in $S$ will be denoted by $\sFbar$.
An $\L$-chain $\vec{q} \in \sFbar$ is \emph{strict} if, for every $1 \leq i < n$, $q_{i+1} <_{\L} q_i$, i.e., $q_{i+1}$ is not $\L$-equivalent to $q_i$. The set of strict $\L$-chains in $S$ is denoted by $\sF$.
Note that $\sF$ is a \emph{finite} set.\footnote{In \cite{HRS2010AP}, these sets were denoted $\sF(S)$ and $\sFbar(S)$, but we can safely omit this reference to $S$, since $S$ is fixed throughout the paper.}

Recall that $s \in S$ is an $\bH$-element if the Schützenberger group  $\Gamma_L(H_s)$ of the $\sH$-class of $s$ lies in $\bH$. We denote by $\sFbar_\bH$ the set of $\L$-chains that consist entirely of $\bH$-elements, and by $\sF_\bH$ the set of strict $\L$-chains that consist entirely of $\bH$-elements.

There is a natural \emph{retraction} $\rho \colon \sFbar \to \sF$, which makes an $\L$-chain strict by erasing all but the last element of a sequence of $\L$-equivalent elements. That is, if $\q = (q_n,\dots,q_{i+1},q_i,\dots,q_1)$ is an $\L$-chain with $q_i  \sL  q_{i+1}$ for some $1 \leq i < n$, then we say that $\q$ \emph{reduces in one step} to the $\L$-chain $\q' = (q_n,\dots,q_{i+1},q_{i-1},\dots,q_1)$. If $\q'$ can be obtained from $\q$ by applying a sequence of one-step reductions, we say that $\q$ \emph{reduces to} $\q'$. The one-step reduction relation forms a confluent rewriting system: if $\q$ reduces to $\q_1$ and $\q_2$, then there is $\q'$ to which both $\q_1$ and $\q_2$ reduce. Thus, any $\q \in \sFbar$ reduces to a unique strict $\L$-chain, $\q\rho$. %Indeed, to make this explicit, any $\L$-chain $\vec{q} \in \sFbar$ can be written uniquely as $\vec{q} = (q^\ell_{m_\ell}, \dots, q^\ell_{1}, \dots, q^1_{m_1}, \dots, q^1_1)$, where, for every $1 \leq i \leq \ell$, the elements $q^i_1, \dots, q^i_{m_i}$ all belong to the same $\L$-class, and, for every $1 \leq i < \ell$, $q^{i+1}_1$ is not $\L$-equivalent to $q^i_{m_i}$. The strict $\L$-chain $\vec{q}\rho$ is then defined as $(q^\ell_{m_\ell}, q^{\ell-1}_{m_{\ell-1}}, \dots, q^1_{m_1})$.
The following lemma collects a few obvious properties of $\rho$ for future reference.

\begin{lemma}\label{lem:rho-properties}
For any $\q,\q' \in \sFbar$,
\begin{enumerate}
\item if $\q$ reduces to $\q'$, then $\q\rho = \q'\rho$;
\item $\vec{q}\rho = \vec{q}$ if and only if $\vec{q}$ is strict;
\item $\q\rho\omega = \q\omega$;
\item $(\q \cdot \q')\rho = (\q \cdot \q'\rho)\rho = (\q\rho \cdot \q')\rho$.
\item if $\q \in \sFbar_\bH$, then $\q\rho \in \sF_\bH$.
\end{enumerate}
\end{lemma}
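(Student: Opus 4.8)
The plan is to handle all five items as bookkeeping within the terminating, confluent rewriting system defining $\rho$, using two elementary facts repeatedly: each one-step reduction \emph{erases a single entry} of an $\L$-chain while leaving the relative order of the remaining entries unchanged; and $\q\rho$ is by construction the \emph{unique} strict $\L$-chain to which $\q$ reduces.

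First I would dispatch (1) and (2) together, since both are about the structure of reductions. For (1): if $\q$ reduces to $\q'$ then $\q$ also reduces to $\q'\rho$ (reduce $\q$ to $\q'$, then $\q'$ to $\q'\rho$), which is strict, so by uniqueness $\q\rho = \q'\rho$. For (2): an $\L$-chain is irreducible precisely when no two consecutive entries are $\L$-equivalent, i.e.\ precisely when it is strict; and since every proper reduction strictly decreases the length, $\q\rho = \q$ holds iff $\q$ is already irreducible, iff $\q \in \sF$.

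Next, (3): the entry deleted in a one-step reduction of $(q_n,\dots,q_1)$ is always some $q_i$ with $1 \le i < n$, so the last entry is never deleted and remains last after any sequence of reductions; hence $\q\rho\omega = \q\omega$. Then (5): since $\q\rho$ is obtained from $\q$ by deleting entries, every entry of $\q\rho$ is an entry of $\q$, so if $\q \in \sFbar_\bH$ every entry of $\q\rho$ is an $\bH$-element; as $\q\rho$ is moreover strict, $\q\rho \in \sF_\bH$.

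The one item requiring a little care is (4), and I would do it last. The key observation is that whether a one-step reduction applies at a consecutive pair of entries depends only on those two entries — on whether they are $\L$-equivalent — and not on any surrounding context. Consequently, any reduction sequence witnessing $\q' \to^* \q'\rho$, carried out inside the suffix, is a valid reduction sequence $\q\cdot\q' \to^* \q\cdot\q'\rho$, and symmetrically $\q\cdot\q' \to^* \q\rho\cdot\q'$. I would also check that the words produced along the way are genuine $\L$-chains: deleting entries from an $\L$-chain yields an $\L$-chain, and at the junction the surviving entry on the $\q$-side is unchanged (still $\q\omega$, by the argument for (3)) while the surviving entry on the $\q'$-side is $\leqL$ the original first entry of $\q'$, hence still $\leqL \q\omega$. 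Finally, applying (1) to these two reduction sequences yields $(\q\cdot\q')\rho = (\q\cdot\q'\rho)\rho = (\q\rho\cdot\q')\rho$. I do not expect a real obstacle anywhere; this context-independence check for block-internal reductions in a concatenation is the only step that is not completely immediate.
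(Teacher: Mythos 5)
The paper states this lemma without proof, treating the five properties as immediate consequences of the definition of the confluent, terminating rewriting system, so there is no official argument to compare with; your verification is exactly the intended one, and your treatment of (1), (2), (3) and (5) is correct. In (4), however, you have the two blocks swapped at the junction: with the paper's right-to-left reading convention, in $\q\cdot\q'$ the word $\q'$ is read \emph{first}, so the junction pair consists of the last letter $\q'\omega$ of $\q'$ and the \emph{first} letter of $\q$, and the constraint to be preserved is (first letter of $\q$) $\leqL \q'\omega$ --- not a condition involving $\q\omega$ and the first entry of $\q'$ as you wrote. Thus it is $\q'\omega$ that is kept intact by the argument you gave for (3) when you reduce inside $\q'$, while reducing inside $\q$ may delete the first letter of $\q$; its surviving replacement is an original entry of $\q$, hence still $\leqL$ the original first letter and therefore $\leqL \q'\omega$. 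Since the situation is symmetric, this is a relabelling rather than a genuine gap: after the swap, your two reduction sequences $\q\cdot\q'\to^{*}\q\cdot\q'\rho$ and $\q\cdot\q'\to^{*}\q\rho\cdot\q'$ do stay inside $\sFbar$, and item (1) then yields $(\q\cdot\q')\rho=(\q\cdot\q'\rho)\rho=(\q\rho\cdot\q')\rho$ as you intended.
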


We will define an automaton $\A = (\sF_{\bH},T,\tau,\eps)$, where the \emph{states} of $\A$ are strict $\sL$-chains of $\bH$-elements in $S$; the \emph{alphabet} of $\A$ is $T$; the \emph{initial state} $i$ of $\A$ is the empty $\sL$-chain, $\eps$; the definition of the \emph{transition functions} $\tau_t$ for $t \in T$ will be given in Definition~\ref{def:tau} below.

\subsection*{Blowup operators}
Our first step towards defining the transition functions $\tau_t$ will be to construct a \emph{blowup operator} $b \colon S \to S$.
\begin{definition}\label{def:blowup}
A function $b \colon S \to S$ is a \emph{pre-blowup operator} if:
\begin{enumerate}[label={(\roman*)}]
\item \label{itm:multiplier} for any $\L$-class $L$, there exists $m_L \in S^I$ such that $sb = sm_L$ for all $s \in L$;
\item \label{itm:H-elements} for any $s \in S$, if $s$ is an $\bH$-element, then $sb = s$, and if $s$ is not an $\bH$-element, then $sb <_{\sH} s$;
\item \label{itm:blows-up} for any $s \in S$, $s \subseteq sb$.
\end{enumerate}
If $b$ is a pre-blowup operator, then a function $m \colon S/{\L} \to S^I$, $L \mapsto m_L$, such that $sb = sm_{L_s}$  for all $s \in S$, %, and $m_L = I$ when $L$ consists of $\bH$-elements,
is called  a \emph{multiplier for $b$}.
A pre-blowup operator $b$ is a \emph{blowup operator} if moreover
\begin{enumerate}
\item[(iv)] \label{itm:idempotent} $b$ is  idempotent, i.e., $sbb = sb$ for every $s \in S$.
\end{enumerate}
\end{definition}
\begin{proposition}\label{prop:preblowup-exists}
The semigroup $S$ admits a pre-blowup operator.
\end{proposition}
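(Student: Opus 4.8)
The plan is to construct a pre-blowup operator $b$ directly, working one $\L$-class at a time and choosing, for each $\L$-class $L$, a suitable multiplier $m_L \in S^I$. For an $\L$-class $L$ consisting of $\bH$-elements, I would simply set $m_L := I$, so that $sb = s$ for all $s \in L$; this takes care of the "$\bH$-element" half of condition \ref{itm:H-elements} and also gives \ref{itm:blows-up} trivially on such classes (since $s \subseteq s$). The real work is at an $\L$-class $L$ that is \emph{not} made of $\bH$-elements. Here I need an element $m_L \in S^I$ such that, for every $s \in L$, the product $sm_L$ satisfies $sm_L <_{\sH} s$ (strictly lower in the $\sH$-order) while at the same time $s \subseteq sm_L$ as subsets of $T$ (recall $S \subseteq 2^T$, and $s$ denotes both an element of $S$ and a subset of $T$). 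The multiplier being a single element $m_L$ depending only on $L$ is exactly condition \ref{itm:multiplier}.

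The key idea for producing $m_L$ is to use the $\bH$-saturation structure of $S$. Fix $s_0 \in L$ and look at the Schützenberger group $\Gamma_R(H_{s_0})$; since $L$ does not consist of $\bH$-elements, this group is \emph{not} in $\bH$, so its $\bH$-kernel $K := K_\bH(\Gamma_R(H_{s_0}))$ is a nontrivial normal subgroup. By Lemma~\ref{lem:lift-Schutz-group}(2) there is a subgroup $G_L \leq \St_R(L)$ mapping onto $\Gamma_R(H_{s_0})$, and by Lemma~\ref{lem:lift-group}-type reasoning I can pull $K$ back to a subgroup-with-distinguished-structure inside $\St_R(L)$; in fact what I want is the set $N \subseteq \St_R(L)$ of elements of $G_L$ mapping into $K$ together with the identity $I$. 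Applying such an element $n$ on the right to $s_0$ moves $s_0$ around inside $H_{s_0}$ but \emph{not} by the identity of $\Gamma_R(H_{s_0})$ unless $n$ maps to the identity. Now here is where saturation enters: because $S$ is $\bHbar$-saturated and closed under unions of $\bH$-kernels of subgroups (Definition~\ref{def:saturated}(iii)), the union $\bigcup K_\bH(G)$ lies in $S$ for any subgroup $G$ of $S$; I want to arrange that the relevant "blown-up" element $s_0 b$ equals (or is $\sH$-below) such a union-of-kernel element. Concretely: consider the subgroup $\{ s_0 n : n \in N \cup \{?\}\}$ ... — more carefully, one takes the subgroup $G$ of $S$ generated appropriately inside $H_{s_0}^I$ or uses the image of $N$ acting, forms $\bigcup K_\bH$ of it, and checks this element is $<_{\sH} s_0$ (it is $\J$-below $s_0$ since it is a union strictly larger than a singleton inside the same $\sH$-class structure) while containing $s_0$ as a subset. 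Then set $s_0 b$ to be this element, and transport along $\L$: for general $s \in L$, write $s = t s_0$ and check that $s m_L$ lands in the right place, where $m_L$ is chosen so that $s_0 m_L = s_0 b$. The subset-containment $s \subseteq s b$ is inherited from $s_0 \subseteq s_0 b$ because multiplication by $t$ on the left is monotone on $2^T$ and because $s = t s_0$ as subsets too (here one uses that $\eta$ and the identification of $T$ inside $S$ respect the subset order — but care is needed, since "$s \subseteq sb$" in \ref{itm:blows-up} really refers to $s$ and $sb$ as \emph{subsets of $T$}, i.e., as elements of $2^T$, not as abstract semigroup elements).

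I expect the main obstacle to be showing the two conditions \ref{itm:H-elements} (the strict inequality $sb <_{\sH} s$ for non-$\bH$ classes) and \ref{itm:blows-up} ($s \subseteq sb$ as subsets of $T$) \emph{simultaneously}, using a single multiplier $m_L$ that works uniformly across the whole $\L$-class $L$. The strictness $sb <_\sH s$ forces $sb$ to drop to a lower $\sH$-class (equivalently, by stability, a lower $\J$-class), which means $sm_L$ must not be $\sH$-equivalent to $s$; getting this from the group-theoretic choice of $m_L$ requires knowing that the chosen element of $\St_R(L)$ does \emph{not} act as the identity of $\Gamma_R(L)$, which is where the non-triviality of the $\bH$-kernel is used, plus Lemma~\ref{lem:one-for-all-right} / Lemma~\ref{lem:trivial-point-stabilizers} to propagate from one $\sH$-class to the whole $\L$-class. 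Meanwhile $s \subseteq sb$ must hold for these same products, and this is forced by condition \ref{itm:blows-up} being about the power-semigroup structure; the reconciliation comes precisely from choosing $sb$ to be (or to dominate, in $2^T$) the union $\bigcup K_\bH(G)$ of an appropriate subgroup $G \leq S$ — this union contains each singleton it is built from, hence contains $s$ — and it is this union that $\bHbar$-saturation guarantees belongs to $S$. So the structure of $S$ as an $\bHbar$-saturated set is used in an essential way, not incidentally. I would also need to double-check the degenerate/boundary cases: the $\L$-class of an idempotent-like element, elements whose $\sH$-class is already a group in $\bH$, and making sure $m_L = I$ is legitimate (it is, since $m_L \in S^I$). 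Finally, note the proposition only asks for a \emph{pre}-blowup operator, so idempotency (iv) is \emph{not} required here — that will presumably be arranged afterwards by iterating $b$ until it stabilizes, which works because each application strictly decreases the (finite) multiset of $\sH$-classes of the non-$\bH$ elements.
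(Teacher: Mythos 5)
Your construction is, in outline, the paper's: take $m_L := I$ on $\L$-classes of $\bH$-elements, and on a non-$\bH$ class $L$ take the lifted subgroup $G_L \leq \St_R(L)$ of Lemma~\ref{lem:lift-Schutz-group}(2) and use the union of its $\bH$-kernel as the multiplier, with $\bHbar$-saturation guaranteeing $m_L = \bigcup K_\bH(G_L) \in S$ (your set $N$, the preimage of $K$ in $G_L$, is exactly $K_\bH(G_L)$ by Lemma~\ref{lem:preserve-H-kernels}; adjoining $I$ to it is unnecessary, and note that the subgroup must be taken in $\St_R(L)$, not ``inside $H_{s_0}^I$'', since the $\sH$-class $H_{s_0}$ need not contain any subgroup). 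Conditions (i) and (iii) of Definition~\ref{def:blowup} then come out as you say, because the identity $u$ of $G_L$ lies in $K_\bH(G_L)$ and acts trivially on $H_s$, giving $s = su \subseteq sm_L$. The genuine gap is in condition (ii): you never prove $sm_L \leq_{\sL} s$. The inequality $sm_L \leq_{\sR} s$ is free, but $sm_L <_{\sH} s$ requires $sm_L \leq_{\sL} s$ as well, and right multiplication gives no a priori control on the $\L$-order. Your offered reason --- that the element is ``$\J$-below $s_0$ since it is a union strictly larger than a singleton'' --- is not a reason: containment of subsets of $T$ has no bearing on Green's order in $S$. The paper's proof spends its main paragraph precisely here: it takes, dually, a subgroup $G' \leq \St_L(R_s)$ (dual of Lemma~\ref{lem:lift-Schutz-group}(2)), sets $m' := \bigcup K_\bH(G')$, and proves $m's = sm_{L_s}$ by transporting kernel elements through the anti-isomorphism $\alpha \colon \Gamma_R(H_s) \to \Gamma_L(H_s)$ together with Lemma~\ref{lem:preserve-H-kernels}; only then does $sm_{L_s} = m's \leq_{\sL} s$ follow. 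Nothing in your sketch plays this role, and without it the claimed $<_{\sH}$ does not get off the ground.

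The strictness part, $sm_L \notin H_s$, is also only gestured at. The correct argument is not merely that some element of $\St_R(L)$ acts nontrivially: since $m_L$ is a union over the whole kernel, $sm_L$ is fixed under right multiplication by every $k_0 \in K_\bH(\Gamma_R(H_s))$; hence if $sm_L$ lay in $H_s$, triviality of point stabilizers (Lemma~\ref{lem:trivial-point-stabilizers}) would force $K_\bH(\Gamma_R(H_s))$ to be trivial, contradicting that $s$ is not an $\bH$-element. Your appeal to Lemmas~\ref{lem:one-for-all-right} and~\ref{lem:trivial-point-stabilizers} shows you are in the right neighborhood, but the fixed-point argument itself is missing. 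One further small point worth recording if you rewrite the proof: for saturation to apply one needs $G_L \subseteq S$, which holds because a nontrivial subgroup of $S^I$ cannot contain $I$. Your closing remarks about idempotency are fine --- (iv) is indeed obtained afterwards by taking an idempotent power, using that pre-blowup operators are closed under composition.
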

\begin{proof}
For any ${\L}$-class $L$ in $S$, pick a subgroup $G_L \leq \St_R(L)$ as in Lemma~\ref{lem:lift-Schutz-group}(2). If $L$ consists of $\bH$-elements, define $m_L := I$. Otherwise, $L$ does not contain any $\bH$-elements, so the group $G_L$ must be non-trivial, and therefore $G_L$ does not contain $I$. Define $m_L := \bigcup K_{\bH}(G_L)$. Note that $m_L$ is in $S$ since $S$ is $\bHbar$-saturated. Now, for every $s \in S$, define $sb_0 := sm_{L_s}$. Note that, for every $s \in S$,
\[sb_0 = \bigcup \{sk \mid k \in K_{\bH}(G_L)\} = \bigcup \{sk \mid k \in K_{\bH}(\Gamma_R(H_s))\},\]
where the last equality follows from Lemma~\ref{lem:preserve-H-kernels}. We will now show that $b_0$ is a pre-blowup operator, by verifying (i) -- (iii) in Def.~\ref{def:blowup}.

(i) By definition of $b_0$.

(ii) If $s$ is an $\bH$-element, then $sb_0 = sI = s$. Now suppose that $s$ is not an $\bH$-element. We will show that (a) $sb_0 \leq_{\sH} s$ and (b) $sb_0 \not\in H_s$. %
(a) Clearly, $sb_0 = sm_{L_s} \leq_{\sR} s$. To prove that $sb_0 \leq_{\sL} s$, by the dual of Lemma~\ref{lem:lift-Schutz-group}(2), pick a subgroup $G' \leq \St_L(R_s)$ such that the restriction of $\St_L(R_s) \onto \Gamma_L(H)$ is surjective for every ${\sH}$-class $H \subseteq R_s$. Let $m' := \bigcup K_{\bH}(G')$. We show that $m's = sm_{L_s}$.
%For this, it suffices to show that, for every $\bar{k} \in K_{\bH}(G_{L_s})$, there exists $\bar{k}' \in K_{\bH}(G')$ such that $\bar{k}'s = s \bar{k}$, and vice versa.
Let $\bar{k} \in K_{\bH}(G_{L_s})$ be arbitrary, and let $k$ be its image under the natural homomorphism to $\Gamma_R(H_s)$. Then $k$ is in $K_{\bH}(\Gamma_R(H_s))$ by Lemma~\ref{lem:preserve-H-kernels}, and therefore $k\alpha$ is in $K_{\bH}(\Gamma_L(H_s))$, where $\alpha$ is the anti-isomorphism between $\Gamma_R(H_s)$ and $\Gamma_L(H_s)$. By Lemma~\ref{lem:preserve-H-kernels}, there exists $\bar{k}' \in K_{\bH}(G')$ which is mapped onto $k\alpha$ by the natural homomorphism from $G'$ to $\Gamma_L(H_s)$. Now $s\bar{k} = sk = (k\alpha) s = \bar{k}'s$, so that $s\bar{k}$ is contained in $m's$. Since $\bar{k}$ was arbitrary, we have proved that $sm_{L_s} \subseteq m's$. The proof that $m's \subseteq sm_{L_s}$ is symmetric. Thus, $sb_0 = m's \leq_{\sL} s$. %
(b) Towards a contradiction, suppose that $sb_0 \in H_s$. For any $k_0 \in K_\bH(\Gamma_R(H_s))$,
\[ (sb_0)k_0 = \bigcup \{skk_0 \ : \ k \in K_\bH(\Gamma_R(H_s))\} = %\bigcup \{sk' \ : \ k' \in K_\bH(\Gamma_R(H_s))\} =
sb_0.\]
Therefore, since the point stabilizers of the action of $\Gamma_R(H_s)$ on $H_s$ are trivial, $K_\bH(\Gamma_R(H_s))$ is a trivial group. But this means that $s$ is an $\bH$-element, which is a contradiction.

(iii) If $s$ is a $\bH$-element, we have equality. If not, notice that $K_\bH(G_{L_s})$ contains the identity $u$ of $G_L$ and so $s = su \subseteq sm_{L_s} = sb_0$.
\end{proof}
\begin{remark}
Other pre-blowup operators exist.  For instance, if $s$ is not an $\bH$-element and $g\in K_{\bH}(G_{L_S})$ represents a non-trivial element of $K_\bH(\Gamma_R(H_s))$, one could use $m_{L_s}=\bigcup \langle g\rangle$ instead (and if $s$ is an $\bH$-element, one still uses $m_{L_s}=I$). This approach was taken in~\cite{HRS2010AP}.
\end{remark}

The following lemma collects a few useful properties of pre-blowup operators.
%A useful property of pre-blowup operators is the following.
\begin{lemma}\label{lem:preblowup-L}
Let $b, b' \colon S \to S$ be pre-blowup operators and let $m, m' \colon S/{\L} \to S$ be multipliers for $b$ and $b'$, respectively. Then, for any $q, s, s' \in S$:
\begin{enumerate}
\item if $s  \sL  s'$, then $sb  \sL  s'b$;
\item the composition $bb'$ is a pre-blowup operator;
\item if $q \leqL s$, then $q \subseteq qm_{L_s}$;
\item if $q \leqL s$ and $s$ is an $\bH$-element, then $q = qm_{L_s}$;
\item if $qb \sR q$, then $qb = q$;
\item if $b$ is idempotent (i.e., a blowup operator), then the image of $b$ is the set of $\bH$-elements of $S$.
\end{enumerate}
\end{lemma}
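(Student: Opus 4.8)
The plan is to prove the six items of Lemma~\ref{lem:preblowup-L} by repeatedly exploiting the defining properties (i)--(iii) of a pre-blowup operator, together with the basic facts about Green's relations, stability, and Sch\"utzenberger groups recalled in Section~\ref{sec:prelim}. Items (1), (3), (4) are the algebraic core and the others reduce to them; I expect item (5) to be the main obstacle, since it is the one place where stability plays a genuinely subtle role.

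First I would prove (3) and (4). If $q \leqL s$, write $q = ts$ for some $t \in S^I$; then $qm_{L_s} = tsm_{L_s} = t(sb) \supseteq ts = q$ by property (iii) applied to $s$ (here I use that $m_{L_s}$ is a multiplier, so $sm_{L_s} = sb$). For (4), if moreover $s$ is an $\bH$-element then $sb = s$ by (ii), so $m_{L_s}$ acts as the identity on $s$; the same computation $qm_{L_s} = t(sm_{L_s}) = ts = q$ now gives equality. For (1), suppose $s \sL s'$; then $s, s'$ lie in the same $\L$-class $L$, so $sb = sm_L$ and $s'b = s'm_L$ with the \emph{same} multiplier element $m_L$, whence $sb = sm_L \leqL s'm_L = s'b$ and symmetrically, so $sb \sL s'b$. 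Item (2) is then routine: compose the multipliers, $m'' \colon L \mapsto$ (the element realizing $m_{L}$ followed by $m'_{L_{sb}}$ on $L$)---more carefully, one checks $s(bb') = (sb)b' = (sm_{L_s})b' = (sm_{L_s})m'_{L_{sb}}$ and by (1) the $\L$-class of $sb$ depends only on $L_s$, so this is a valid multiplier; properties (ii) and (iii) for $bb'$ follow by combining those for $b$ and $b'$ (for (ii): if $s$ is not an $\bH$-element then $sb <_{\sH} s$, and then $sbb' \leq_{\sH} sb <_{\sH} s$).

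For (5), suppose $qb \sR q$. By property (i), $qb = qm_{L_q} \leqR q$ always, so $qb \sR q$ means $qb \R q$, hence $q \mathrel{\J} qb$; since also $qb \leqL q$ (this holds for pre-blowup operators---it is exactly what was proved in part (a) of the proof of Proposition~\ref{prop:preblowup-exists}, but to be safe I would instead argue directly from (ii): $qb \leqH q$ when $q$ is an $\bH$-element and $qb <_{\sH} q$ otherwise, so in all cases $qb \leqL q$), stability gives $qb \L q$, so $qb \sH q$, i.e.\ $qb \in H_q$. But property (ii) says that if $q$ is not an $\bH$-element then $qb <_{\sH} q$, contradicting $qb \in H_q$; hence $q$ is an $\bH$-element, and then $qb = q$ by (ii). Finally (6): if $b$ is idempotent, then for any $s$ the element $sb$ satisfies $(sb)b = sb$, so $sb$ is a fixed point of $b$; by (ii) the fixed points of $b$ are exactly the $\bH$-elements (an $\bH$-element is fixed, and a non-$\bH$-element $s$ has $sb <_{\sH} s \neq s$), so $sb$ is an $\bH$-element and the image of $b$ is contained in the set of $\bH$-elements; conversely every $\bH$-element $s$ equals $sb$ and so lies in the image.

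The one point requiring care---and the main obstacle---is making sure that in item (5) I only use what a \emph{pre-blowup} operator guarantees (not idempotency, which is not assumed there), and that the inequality $qb \leqL q$ is available; I would derive the latter cleanly from (ii) as indicated above rather than re-running the multiplier-and-anti-isomorphism argument from Proposition~\ref{prop:preblowup-exists}. Everything else is a short diagram chase using $S^I y$-membership and stability, so the write-up should be brief.
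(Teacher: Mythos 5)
Your proposal is correct and follows essentially the same route as the paper: (1) via $\L$ being a right congruence, (2) by composing multipliers using (1), (3)--(4) by writing $q=ts$ and using $s\subseteq sb$ (resp.\ $sb=s$), (5) by noting $qb\leq_{\sH}q$ from (ii) and invoking stability to get $qb\sH q$, and (6) via fixed points of an idempotent map. Your extra care in deriving $qb\leq_{\sL}q$ directly from Definition~\ref{def:blowup}(ii) rather than from the construction in Proposition~\ref{prop:preblowup-exists} is exactly the right move and matches the paper's implicit use of $qb\leq_{\sH}q$.
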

\begin{proof}
(1) Since $\L$ is a right congruence and $L_s = L_{s'}$, $sb = sm_{L_s}  \sL  s'm_{L_{s}} = s'b$.
(2) We first define a multiplier $n$ for $bb'$. By (1), for any $\L$-class $L$, the image $Lb$ is contained in a unique $\L$-class, $L'$; define $n_L := m_L m'_{L'}$. Then, for any $s \in L$, $sbb' = sm_Lb' = sm_Lm'_{L'} = sn_L$, so $n$ is a multiplier for $bb'$. Properties (ii) and (iii) in Def.~\ref{def:blowup} clearly hold for $bb'$.
(3) \& (4) Since $q \leqL s$, pick $\alpha \in S$ such that $q = \alpha s$. Since $s \subseteq sb = sm_{L_s}$, we obtain $q = \alpha s \subseteq \alpha s m_{L_s} = qm_{L_s}$. If $s$ is an $\bH$-element, then $sb = s$, and therefore $qm_{L_s} = \alpha sm_{L_s} = \alpha sb = \alpha s = q$. (5) Since $qb \leqH q$, if $qb \sR q$, then $qb  \J  q$, so stability yields $qb  \sL  q$. Therefore, $qb  \sH  q$, so by Def.~\ref{def:blowup}\ref{itm:H-elements}, $q$ is an $\bH$-element, and hence $qb = q$. (6) Note that Def.~\ref{def:blowup}\ref{itm:H-elements} implies that the set of fixed points of $b$ is the set of $\bH$-elements of $S$. The image of an idempotent operator is its set of fixed points. %By Def.~\ref{def:blowup}\ref{itm:H-elements}, any $\bH$-element is in the image of $b$. Moreover, for any $s \in S$, $sbb = sb$ implies that $sb$ is an $\bH$-element, applying Def.~\ref{def:blowup}\ref{itm:H-elements} to the element $sb$.
\end{proof}
\begin{proposition}\label{prop:blowup-exists}
The semigroup $S$ admits a blowup operator.
\end{proposition}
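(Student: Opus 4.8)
The plan is to start with the pre-blowup operator $b_0$ produced by Proposition~\ref{prop:preblowup-exists} and iterate it to obtain an idempotent operator, using Lemma~\ref{lem:preblowup-L}(2) to see that each iterate is still a pre-blowup operator. Concretely, set $b_k := b_0^{\,k}$ (the $k$-fold composite). By Lemma~\ref{lem:preblowup-L}(2) and induction, each $b_k$ is a pre-blowup operator. I claim the sequence stabilizes: by property~\ref{itm:H-elements} in Definition~\ref{def:blowup}, applied at each stage, for every $s \in S$ we have a chain $s \geq_{\sH} sb_0 \geq_{\sH} sb_0b_0 \geq_{\sH} \cdots$; since $S$ is finite, this descending $\leq_{\sH}$-chain is eventually constant, so there is $k_s$ with $sb_0^{\,k_s} = sb_0^{\,k_s+1}$, and then $sb_0^{\,k} = sb_0^{\,k_s}$ for all $k \geq k_s$. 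Taking $N$ to be the maximum of the $k_s$ over the (finite) set $S$ together with, say, $N \geq |S|$ to be safe, the operator $b := b_0^{\,N}$ satisfies $sbb = sb_0^{\,2N} = sb_0^{\,N} = sb$ for every $s$, so $b$ is idempotent. Since $b = b_0^{\,N}$ is a pre-blowup operator by the iterated use of Lemma~\ref{lem:preblowup-L}(2), it is a blowup operator.

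A cleaner way to phrase the same argument, which I would actually use, is to invoke finiteness of the transformation monoid: the submonoid of the full transformation monoid on $S$ generated by $b_0$ is finite, hence contains an idempotent power $b_0^{\,N}$ of $b_0$ with $N \geq 1$ (take $N$ to be a multiple of the period that exceeds the index of $b_0$). Then $b := b_0^{\,N}$ is idempotent by construction, and it is a pre-blowup operator because it is a composite of $N$ copies of the pre-blowup operator $b_0$, repeatedly applying Lemma~\ref{lem:preblowup-L}(2). Thus $b$ is a blowup operator, which establishes the proposition. As a sanity check on property~\ref{itm:blows-up}: it is preserved under composition of pre-blowup operators since $s \subseteq sb_0 \subseteq sb_0 b_0$ using monotonicity of multiplication together with property~\ref{itm:blows-up} for $b_0$ applied to the element $sb_0$; and properties~\ref{itm:multiplier} and~\ref{itm:H-elements} for the composite are exactly what Lemma~\ref{lem:preblowup-L}(2) records.

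I do not expect any genuine obstacle here: the content has been front-loaded into Proposition~\ref{prop:preblowup-exists} and Lemma~\ref{lem:preblowup-L}(2), and all that remains is the standard "idempotent power" trick for a single operator on a finite set. The only point requiring a sentence of care is why the iterates eventually stabilize, and the $\leq_{\sH}$-descent argument (or, equivalently, finiteness of the cyclic monoid generated by $b_0$) handles it immediately.
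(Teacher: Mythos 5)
Your proof is correct and follows essentially the same route as the paper: take the pre-blowup operator $b_0$ from Proposition~\ref{prop:preblowup-exists}, use Lemma~\ref{lem:preblowup-L}(2) to see that composites of pre-blowup operators are pre-blowup operators, and take the idempotent power $b_0^\omega$ in the finite transformation monoid generated by $b_0$. The extra $\leq_{\sH}$-descent discussion is a fine (if redundant) elaboration of why the idempotent power exists.
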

\begin{proof}
By Proposition~\ref{prop:preblowup-exists}, pick a pre-blowup operator $b_0$ on $S$. Since pre-blowup operators are closed under composition by Lemma~\ref{lem:preblowup-L}(2), the idempotent power under composition, $b := b_0^\omega$, is a blowup operator.
\end{proof}
From here on out, we fix a blowup operator $b$ on $S = \Sat_{\bHbar}(T\eta)$. Our next step towards defining the transition functions of our automaton is to extend the blowup operator $b$ to an operator, $B$, that will act on $S^*$.
\begin{definition}\label{def:diagonal-and-Bhat}
For any $s \in S^I$, define the \emph{diagonal operator associated to $s$}, $\Delta_s \colon S^* \to S^*$, by $\q\Delta_s := (q_ns,\dots,q_1s)$, for any $\q = (q_n,\dots,q_1) \in S^*$. In particular, $\eps \Delta_s := \eps$.

Fix a multiplier $m \colon S/{\L} \to S^I$ for $b$. Recursively define the length-preserving function $B \colon S^* \to S^*$ by $\eps B := \eps$, and, for any $\q \in S^*$ and $s \in S$,
\[(\q \cdot s) B := (\q\Delta_{m_{L_s}})B \cdot sb.\]
\end{definition}
\begin{example}\label{exa:Bhat-up-to-three}
To illustrate the definition of $B$, we compute $B$ for words of length up to $2$. By definition, $\eps B = \eps$. For any word of length one, $q \in S$, we compute
\[ qB = \eps\Delta_{m_{L_q}}B \cdot qb = \eps B \cdot qb = \eps \cdot qb = qb = qm_{L_q}.\]
Now, for a word $(q_2,q_1)$ of length $2$,
\[ (q_2,q_1)B = (q_2m_{L_{q_1}}b, q_1b) = (q_2m_1m_2,q_1m_1),\]
where $m_1 := m_{L_{q_1}}$ and $m_2 := m_{L_{q_2m_1}}$.
%
%With the same notation, for a word $(q_3,q_2,q_1)$ of length $3$, let $m_3 := m_{L_{q_3m_1m_2}}$. Then
%\[ (q_3,q_2,q_1)B = (q_3m_1,q_2m_1)B \cdot q_1b = %(q_3m_1m_2b, q_2m_1b, q_1b) =
%(q_3m_1m_2m_3,q_2m_1m_2,q_1m_1),\]
%
\end{example}
While the operators $\Delta_s$ and $B$ are defined on all of $S^*$, we will mostly be interested in their action on $\L$-chains.
How $B$ acts on $\L$-chains does not depend on the specific choice of a multiplier $m$. Indeed, suppose $m$ and $m'$ are multipliers for $b$ which are used to define length-preserving functions $B$ and $B'$, respectively. Then, for any $q, s \in S$ with $q \leq_{\L} s$, pick $x \in S^I$ such that $q = xs$, and note $qm_{L_s} = xsm_{L_s} = xsb = xsm'_{L_s} = qm'_{L_s}$. Inductively, we conclude that $\q B = \q B'$ for any $\L$-chain $\q$.

\begin{comment}
The following lemma gives an explicit formula for the recursive definition of $B$, showing how Example~\ref{exa:Bhat-up-to-three} extends to words of arbitrary length.
\begin{lemma}\label{lem:Bhat-explicit}
For any $n \geq 1$ and $\q = (q_n,\dots,q_1) \in S^n$, inductively define $m^\q_1, \dots, m^\q_{n-1}$ by
\[ m^\q_1 := m_{L_{q_1}}, \text{ and } m^\q_i := m_{L_{q_i m_1^\q \cdots m_{i-1}^\q}} \text{ for } 2 \leq i \leq n-1.\]
Then
\[\q B = (q_n m^\q_1 \dots m^\q_{n-1}b, \dots, q_2m^\q_1b, q_1b),\]
i.e., the $i^\th$ coordinate of $\q B$ is $q_i m^\q_1 \dots m^\q_{i-1}b$.
\end{lemma}
\begin{proof}
By induction on $n$. The case $n = 1$ is clear. For $n \geq 2$, the recursive definition of $B$ gives
\[ \q B = \q'B \cdot q_1b, \text{ where } \q' = (q_nm^\q_1,\dots,q_2m^\q_1).\]
Thus, the first coordinate is indeed $q_1b$, and, for $1 \leq i < n$, the $(i+1)^\th$ coordinate of $\q B$ is equal to the $i^\th$ coordinate of $\q'B$, which, by the induction hypothesis, is $q_{i+1}m^\q_1 m^{\q'}_1\dots m^{\q'}_{i-1}b$. We now claim that $m^{\q'}_j = m^\q_{j+1}$ for all $1 \leq j \leq n - 2$. Indeed, $m^{\q'}_1 = m_{L_{q_2m^\q_1}} = m^\q_2$, and then inductively, for $1 < j \leq n-2$,
\[m^{\q'}_j = L_{q'_j m^{\q'}_1 \cdots m^{\q'}_{j-1}} = L_{q_{j+1} m^\q_1 m^\q_2 \cdots m^\q_j} = m^\q_{j+1}.\]
Thus, the $(i+1)^\th$ coordinate of $\q B$ is equal to $q_{i+1}m^\q_1 m^{\q}_2\dots m^{\q}_{i}b$, as required.
\end{proof}
\end{comment}

\begin{proposition}\label{prop:Delta-Bhat-preserve-flags}
For any ${\L}$-chain $\q$,
\begin{enumerate}
\item the word $\q\Delta_s$ is an ${\L}$-chain, for any $s \in S^I$;
\item the word $\q B$ is an ${\L}$-chain of $\bH$-elements;
\item if $\q$ consists entirely of $\bH$-elements, then $\q B = \q$;
\item the $i^\th$ coordinate of $\q$ is contained in the $i^\th$ coordinate of $\q B$.
\end{enumerate}
In particular, the restriction $B|_{\sFbar}$ of $B$ to ${\L}$-chains is idempotent.
\end{proposition}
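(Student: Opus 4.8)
The plan is to prove part (1) by a direct check and then to establish parts (2)--(4) by a single induction on the length $n$ of $\q=(q_n,\dots,q_1)$, peeling off the \emph{first} (rightmost) letter $q_1$, so that $\q=\q'\cdot q_1$ with $\q'=(q_n,\dots,q_2)$ and, by the recursive clause of Definition~\ref{def:diagonal-and-Bhat}, $\q B=(\q'\Delta_{m_{L_{q_1}}})B\cdot q_1b$. Two small observations will be invoked repeatedly. First, $sb\leqL s$ for every $s\in S$: by Definition~\ref{def:blowup}\ref{itm:H-elements} either $sb=s$ or $sb<_{\sH}s$, so in any case $sb\leqH s$. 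Second, directly from the recursive clause, the first coordinate of $\vec{w}B$ is $vb$ whenever $\vec{w}$ is nonempty with first letter $v$; in particular the first coordinate of $\q B$ is $q_1b$ and that of $(\q'\Delta_{m_{L_{q_1}}})B$ is $(q_2m_{L_{q_1}})b$.

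Part (1) needs no induction: $\leqL$ is stable under right multiplication, so $q_{i+1}\leqL q_i$ forces $q_{i+1}s\leqL q_is$, and thus $\q\Delta_s\in\sFbar$. For (2)--(4) the cases $n\leq 1$ are immediate (for $n=1$, $\q B=(q_1b)$), so I fix $n\geq 2$. Part (1) now guarantees that $\q'':=\q'\Delta_{m_{L_{q_1}}}$ is again an $\L$-chain (of length $n-1\geq 1$), so the induction hypothesis applies to $\q''$; and since for $i\geq 2$ the $i^\th$ coordinate of $\q B$ is the $(i-1)^\th$ coordinate of $\q''B$, each of (2)--(4) reduces to a statement about $q_1b$ together with one about $\q''B$.

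For (4): $q_1\subseteq q_1b$ by Definition~\ref{def:blowup}\ref{itm:blows-up}, while for $i\geq 2$ the $(i-1)^\th$ coordinate of $\q''$ is $q_im_{L_{q_1}}$, which contains $q_i$ by Lemma~\ref{lem:preblowup-L}(3) (since $q_i\leqL q_1$) and is contained in the $(i-1)^\th$ coordinate of $\q''B$ by induction. For (2): $q_1b$ lies in the image of $b$, which is the set of $\bH$-elements by Lemma~\ref{lem:preblowup-L}(6), and the induction hypothesis covers $\q''B$, so every coordinate of $\q B$ is an $\bH$-element; moreover $\q B$ is an $\L$-chain because $\q''B$ is one by induction and the junction holds, namely the first coordinate $(q_2m_{L_{q_1}})b$ of $\q''B$ satisfies $(q_2m_{L_{q_1}})b\leqL q_2m_{L_{q_1}}\leqL q_1m_{L_{q_1}}=q_1b$, using $sb\leqL s$ and right-compatibility of $\leqL$ applied to $q_2\leqL q_1$. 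For (3): if $\q$ consists of $\bH$-elements then $q_1b=q_1$ by Definition~\ref{def:blowup}\ref{itm:H-elements}, and each letter $q_j$ of $\q'$ has $q_j\leqL q_1$ with $q_1$ an $\bH$-element, so $q_jm_{L_{q_1}}=q_j$ by Lemma~\ref{lem:preblowup-L}(4); hence $\q''=\q'$ and $\q B=\q'B\cdot q_1=\q'\cdot q_1=\q$ by induction. The closing ``in particular'' is then immediate: for $\q\in\sFbar$, part (2) gives $\q B\in\sFbar_\bH$, and part (3) applied to $\q B$ gives $(\q B)B=\q B$.

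The one step I expect to require genuine care is the junction inequality inside part (2): checking that appending the freshly blown-up letter $q_1b$ to the recursively transformed prefix $\q''B$ keeps the result an $\L$-chain. It is settled by the short chain of inequalities displayed above, which marries right-compatibility of $\leqL$ with the elementary bound $sb\leqL s$ coming from clause~\ref{itm:H-elements} of Definition~\ref{def:blowup}; everything else amounts to unwinding the recursion defining $B$.
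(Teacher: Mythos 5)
Your proof is correct and follows essentially the same route as the paper: part (1) by right-compatibility of $\leqL$, then a single induction for (2)--(4) that peels off the rightmost letter via the recursive clause of Definition~\ref{def:diagonal-and-Bhat}, settles the junction with the chain $(q_2m_{L_{q_1}})b \leqL q_2m_{L_{q_1}} \leqL q_1m_{L_{q_1}} = q_1b$, and invokes Lemma~\ref{lem:preblowup-L}(3),(4),(6) and Definition~\ref{def:blowup}(ii),(iii) exactly where the paper does. The only (harmless) difference is notational, plus your welcome explicit derivation of the final idempotency remark from (2) and (3).
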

\begin{proof} Let $\q$ be an ${\L}$-chain.
(1) is clear from the fact that $q \leqL q'$ implies $qs \leqL q's$.
We show (2)--(4) by induction on the length of $\q$. The case where $\q = \eps$ is trivial. The case where $\q$ is a one-letter word is clear from Def.~\ref{def:blowup} and Lemma~\ref{lem:preblowup-L}(6).
Now let $n \geq 1$, assume that (2)--(4) have been proved for words of length at most $n$, and let $\q' = \q \cdot s$ be an $\L$-chain, where $\q = (q_n,\dots,q_1)$. By (1), $\q \Delta_{m_{L_s}}$ is an $\L$-chain, so $\q \Delta_{m_{L_s}}B$ is an $\L$-chain of $\bH$-elements by induction. Moreover, the first element of $\q \Delta_{m_{L_s}}B$ is $q_1m_{L_s}b$, and
\[q_1m_{L_s}b \leqL q_1m_{L_s} \leqL sm_{L_s} = sb,\]
where the first inequality holds by Def.~\ref{def:blowup}(ii), and the second inequality holds because $q_1 \leqL s$, since $\q \cdot s$ is an $\L$-chain. Also, $sb$ is an $\bH$-element by Lemma~\ref{lem:preblowup-L}(6), so $\q' B = \q \Delta_{m_{L_s}}B \cdot sb$ is an $\L$-chain of $\bH$-elements, establishing (2). For (3), assume $\q'$ consists of $\bH$-elements. Then Lemma~\ref{lem:preblowup-L}(4) implies that $\q \Delta_{m_{L_s}} = \q$, and by the induction hypothesis, $\q B = \q$. Therefore, since also $sb = s$ by Def.~\ref{def:blowup}(ii), we have $\q' B = \q'$. Finally, for (4), note that the first coordinate $s$ of $\q'$, is contained in the first coordinate $sb$ of $\q' B$, by Def.~\ref{def:blowup}(iii). For any $1 < i \leq n + 1$, the $i^\mathrm{th}$ coordinate of $\q'$ is $q_{i-1}$, which is $\L$-below $s$. By Lemma~\ref{lem:preblowup-L}(3), $q_{i-1}$ is contained in $q_{i-1}m_{L_s}$. By the induction hypothesis applied to the $\L$-chain $\q\Delta_{m_{L_s}}$, we have that $q_{i-1}m_{L_s}$ is contained in the $(i-1)^\mathrm{th}$ coordinate of $\q\Delta_{m_{L_s}}B$, which is, by definition, the $i^{\mathrm{th}}$ coordinate of $\q'B$.
\end{proof}
\begin{comment}
Crucially, we now show that $B$ still `blows up' $\L$-chains, coordinatewise.
\begin{lemma}\label{lem:B-blows-up}
Let $\q \in \sFbar$ be of length $n$. For any $1 \leq i \leq n$, the $i^\mathrm{th}$ coordinate of $\q$ is contained in the $i^\mathrm{th}$ coordinate of $\q B$.
\end{lemma}
\begin{proof}
The proof is by induction on $n$. The case $n = 0$ is trivial. Assume the statement is proved for all words of length $\leq n$. Let $\q = (q_n,\dots,q_1)$ be any word of length $n$, and $s \in S$; we prove the statement for $\q \cdot s$. The first coordinate, $s$, of $\q \cdot s$, is contained in the first coordinate, $sb$, of $(\q \cdot s)B$, by Def.~\ref{def:blowup}(4). For any $1 < i \leq n + 1$, the $i^\mathrm{th}$ coordinate of $\q \cdot s$ is $q_{i-1}$. Since $q_{i-1} \leqL s$, Lemma~\ref{lem:preblowup-L}(3) implies that $q_{i-1} \subseteq q_{i-1}m_{L_s}$. By the induction hypothesis, applied to the $\L$-chain $\q\Delta_{m_{L_s}}$, $q_{i-1}m_{L_s}$ is contained in the $(i-1)^\mathrm{th}$ coordinate of $(\q\Delta_{m_{L_s}})B$, which is, by definition, the $i^{\mathrm{th}}$ coordinate of $(\q \cdot s)B$. %the coordinate of $\q$, which is contained in the $(i-1)^{\mathrm{th}}$ coordinate of $\q B$, by the induction hypothesis.
\end{proof}
\end{comment}

As a last step before we define the automaton, we now show that the diagonal operators and $B$ interact well with the retraction $\rho$ from $\L$-chains to strict $\L$-chains, in the following sense.
\begin{definition}
Let $F \colon \sFbar \to \sFbar$ be a function. We say that $F$ \emph{respects $\rho$} if, for any $\q \in \sFbar$, $\q \rho F \rho = \q F \rho$.
\end{definition}
Observe that if $F$ and $G$ respect $\rho$, then so does $FG$, since
\[\rho FG \rho = \rho F \rho G \rho = F \rho G \rho = FG \rho.\]

\begin{proposition}\label{prop:Delta-Bhat-rho}
The diagonal operators $\Delta_s$, for $s$ in $S^I$, and $B$ respect $\rho$.
%\begin{enumerate}
% \item $\q \rho \Delta_m \rho = \q \Delta_m \rho$,
% \item $(\q \rho \Delta_m \cdot m)\rho = (\q \Delta_m \cdot m)\rho$,
% \item $\q \rho B \rho = \q B \rho$.
%\end{enumerate}
\end{proposition}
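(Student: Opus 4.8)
The plan is to establish the two assertions separately, in each case reducing to a compatibility with a single one-step reduction of $\L$-chains. For the diagonal operators, the first observation is that if $\q$ reduces in one step to $\q'$, then $\q\Delta_s$ reduces in one step to $\q'\Delta_s$ for every $s\in S^I$: a one-step reduction deletes a coordinate $q_i$ with $q_i\sL q_{i+1}$, and since $\L$ is a right congruence we have $q_is\sL q_{i+1}s$, so the corresponding coordinate of $\q\Delta_s$ may be deleted; moreover $\q\Delta_s\in\sFbar$ by Proposition~\ref{prop:Delta-Bhat-preserve-flags}(1). Iterating along a reduction of $\q$ to its normal form $\q\rho$, we get that $\q\Delta_s$ reduces to $(\q\rho)\Delta_s$, and hence $(\q\Delta_s)\rho=((\q\rho)\Delta_s)\rho$ by Lemma~\ref{lem:rho-properties}(1); that is, $\Delta_s$ respects $\rho$.

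For $B$ I would prove $\q B\rho=(\q\rho)B\rho$ by induction on the length of $\q$. If $\q$ is strict there is nothing to prove; otherwise $\q$ has a strictly shorter one-step reduct $\q'$ with $\q\rho=\q'\rho$, and, using Lemma~\ref{lem:rho-properties}(1) and the induction hypothesis for $\q'$, the inductive step reduces to the claim that every one-step reduction $\q\to\q'$ satisfies $\q B\rho=\q'B\rho$ (the verification of which, for $\q$ of length $n$, will invoke the induction hypothesis only for chains of length $<n$). Let $q_i$ be the deleted coordinate, so $q_i\sL q_{i+1}$. If $q_i$ is not the first letter of $\q$, write $\q=\q_0\cdot q_1$ and $\q'=\q_0'\cdot q_1$ with $\q_0\to\q_0'$ a one-step reduction; then $\q B=(\q_0\Delta_{m_{L_{q_1}}})B\cdot q_1b$ and $\q'B=(\q_0'\Delta_{m_{L_{q_1}}})B\cdot q_1b$ by the recursive definition of $B$, the chains $\q_0\Delta_{m_{L_{q_1}}}$ and $\q_0'\Delta_{m_{L_{q_1}}}$ are related by a one-step reduction (by the observation about $\Delta_s$ above) and are strictly shorter than $\q$, so the induction hypothesis and Lemma~\ref{lem:rho-properties}(1) give $(\q_0\Delta_{m_{L_{q_1}}})B\rho=(\q_0'\Delta_{m_{L_{q_1}}})B\rho$, and an application of Lemma~\ref{lem:rho-properties}(4) yields $\q B\rho=\q'B\rho$.

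The remaining case, where $q_i$ is the first letter of $\q$, is the heart of the argument and is where I expect the real work. Here $\q=\q'\cdot q_1$ with $q_1\sL q_2$, $q_2$ being the first letter of $\q'$, so $\q B=(\q'\Delta_{m_{L_{q_1}}})B\cdot q_1b=(\q'\Delta_{m_{L_{q_2}}})B\cdot q_1b$. The key auxiliary identity I would isolate is that, for any nonempty $\L$-chain $\w$ with first letter $w$, one has $(\w\Delta_{m_{L_w}})B=\w B$: the first letter of $\w\Delta_{m_{L_w}}$ is $wm_{L_w}=wb$, which is an $\bH$-element and a fixed point of $b$ by Lemma~\ref{lem:preblowup-L}(6), so in the recursive computation of $(\w\Delta_{m_{L_w}})B$ the diagonal operator attached to this first letter fixes the remaining coordinates by Lemma~\ref{lem:preblowup-L}(4) and the letter itself is sent to $wb$; comparing with one unwinding of the recursion for $\w B$ proves the identity. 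Applying it with $\w=\q'$ gives $\q B=\q'B\cdot q_1b$; since $q_1\sL q_2$ forces $q_1b\sL q_2b$ by Lemma~\ref{lem:preblowup-L}(1) while $q_2b$ is the first letter of $\q'B$, the chain $\q B$ reduces in one step to $\q'B$, so $\q B\rho=\q'B\rho$ by Lemma~\ref{lem:rho-properties}(1), completing the induction. The composition remark preceding the statement then shows that arbitrary composites of the operators $\Delta_s$ and $B$ also respect $\rho$. The main obstacle is precisely reconciling, in this last case, the recursion defining $B$ (which peels off the first letter) with the behaviour of $\rho$ (which contracts the leading block of $\L$-equivalent coordinates); once the identity $(\w\Delta_{m_{L_w}})B=\w B$ is in hand, the rest is bookkeeping.
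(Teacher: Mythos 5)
Your argument is correct and is essentially the paper's proof in a different packaging: both run an induction on the length of the chain, and the crux in both is the same computation — when the leading letter is $\L$-equivalent to its neighbour, the extra diagonal operator is absorbed via Lemma~\ref{lem:preblowup-L}(4) and the idempotence of $b$ (your identity $(\mathbf{w}\Delta_{m_{L_w}})B=\mathbf{w}B$ is exactly the content of the paper's Case 2, namely $\q^-\Delta_m\Delta_{m'}=\q^-\Delta_m$ and $q_1mb=q_1b\L sb$). The only difference is organizational: you phrase the induction as invariance of $\q B\rho$ under one-step reductions and note that $\q B$ literally reduces in one step to $\q'B$, whereas the paper's induction appends a letter and compares $\q'\rho B\rho$ with $\q'B\rho$ directly.
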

\begin{proof}
First note that, for any $\L$-chain $\q$, the $\L$-chain $\q \Delta_s$ reduces to the $\L$-chain $\q\rho \Delta_s$. Hence, their images under $\rho$ are equal, and $\Delta_s$ respects $\rho$.
We now prove by induction on the length of $\q \in \sFbar$ that $\q \rho B \rho = \q B \rho$. If $\q$ has length $0$ or $1$, then there is nothing to prove. Let $n \geq 1$, assume by induction that the equality is proved for all words of length at most $n$, and let $\q' = \q\cdot s$, where $\q = (q_n,\dots,q_1)$, be an $\L$-chain. We write $m$ for $m_{L_s}$, and we distinguish two cases.

\emph{Case 1.} $q_1$ and $s$ are not ${\L}$-equivalent.

In this case, $\q'\rho = \q\rho \cdot s$. Therefore,
\begin{equation}\label{eq:rhoproof-1}
 \q'\rho B\rho = (\q\rho \cdot s)B\rho = (\q\rho\Delta_mB \cdot sb)\rho.
 \end{equation}
Note that
\begin{equation}\label{eq:rhoproof-2}
\q\rho\Delta_mB\rho = \q\rho\Delta_m\rho B\rho = \q\Delta_m\rho B\rho = \q \Delta_m B \rho,
\end{equation}
where we first apply the induction hypothesis to the word $\q\rho\Delta_m$, then the already established fact that $\Delta_m$ respects $\rho$, and then the induction hypothesis to the word $\q \Delta_m$. Hence,
\begin{align*}
\q'\rho B \rho &= (\q\rho\Delta_mB \cdot sb)\rho &\text{(using (\ref{eq:rhoproof-1}))}\\
&= (\q\rho\Delta_mB\rho \cdot sb)\rho &\text{(by Lemma~\ref{lem:rho-properties}(4))}\\
&= (\q \Delta_m B \rho \cdot sb)\rho &\text{(using (\ref{eq:rhoproof-2}))}\\
&= (\q \Delta_m B \cdot sb)\rho = \q'B\rho &\text{(by Lemma~\ref{lem:rho-properties}(4) and Def.~\ref{def:diagonal-and-Bhat}).}
\end{align*}

\emph{Case 2.} $q_1$ and $s$ are ${\L}$-equivalent.

In this case, $\q'\rho = \q\rho$. It then suffices to  show that $\q'B\rho = \q B\rho$ because we will have $\q'\rho B\rho = \q\rho B\rho =\q B\rho=\q'B\rho$ where the second equality uses the induction hypothesis.
Unravelling the definition of $B$ (Def.~\ref{def:diagonal-and-Bhat}) twice, we have
\begin{equation}\label{eq:rhoproof-3}
\q'B = \q^- \Delta_{m}\Delta_{m'}B \cdot q_1mb \cdot sb,
\end{equation}
where  $\q^- := (q_n,\dots,q_2)$ and $m' := m_{L_{q_1m}}$.

Now, since $m_{L_{q_1}} = m_{L_s} = m$, we have $q_1b = q_1m$. Since $b$ is idempotent, we obtain $q_1mb = q_1b^2 = q_1b = q_1m$. Therefore, $q_1b = q_1m$ is an $\bH$-element. Also, since ${\L}$ is a right congruence,
\begin{equation}\label{eq:rhoproof-4}
q_1mb = q_1m \L sm = sb.
\end{equation} %and $\L$-equivalent to $sb = sm$, so that (\ref{eq:rhoproof-3}) becomes
%\begin{equation}\label{eq:rhoproof-4}
%\q'B\rho = (\q^- \Delta_{m}\Delta_{m'}B \cdot q_1b)\rho.
%\end{equation}
Moreover, since $\q$ is an $\L$-chain, for every $2 \leq i \leq n$, we have $q_im \leqL q_1m$, so, by Lemma~\ref{lem:preblowup-L}(4), we have $q_imm' = q_im$. Thus,
\begin{equation}\label{eq:rhoproof-5}
\q^-\Delta_m\Delta_m' = \q^-\Delta_m.
\end{equation}
We now obtain, recalling that $m=m_{L_{q_1}}$,
\begin{align*}
\q'B\rho &= (\q^- \Delta_m \Delta_{m'} B \cdot q_1b)\rho &\text{(by (\ref{eq:rhoproof-3}) and (\ref{eq:rhoproof-4}))}\\
&= (\q^-\Delta_mB \cdot q_1b)\rho &\text{(by \ref{eq:rhoproof-5})} \\
&= \q B\rho. &{}  \qedhere
\end{align*}
\end{proof}

\subsection*{Definition of automaton and flow}
The transition functions of our automaton will act on \emph{strict} $\L$-chains. We first define functions, $\taubar_t$, which will act on $\L$-chains, and then compose $\taubar_t$ with $\rho$ to obtain at last the transition functions of our automaton.  Recall that we identify $T$ as a subsemigroup of $2^T$ via $t\mapsto t\eta=\{t\}$.

\begin{definition}\label{def:taubar}
For any $t \in T$ and $\q \in S^*$, define
\[ \q \taubar_t := (\q \Delta_{t} \cdot t)B.\]
\end{definition}

We show that $\taubar_{t}$ indeed acts on $\L$-chains.

\begin{lemma}\label{lem:taubar-on-states}
For any $t \in T$, $\taubar_t$ maps $\L$-chains to $\L$-chains of $\bH$-elements, and $\taubar_t$ respects $\rho$.
\end{lemma}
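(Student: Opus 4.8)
The plan is to prove the two assertions of Lemma~\ref{lem:taubar-on-states} in order, building directly on the already-established facts about $\Delta_s$ and $B$. The key observation is that $\taubar_t$ is, by Definition~\ref{def:taubar}, a composite of elementary operations whose behaviour on $\L$-chains we have already pinned down: first apply the diagonal operator $\Delta_t$, then append the letter $t$ on the left, then apply $B$.

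\emph{Step 1: $\taubar_t$ maps $\L$-chains to $\L$-chains of $\bH$-elements.} Let $\q$ be an $\L$-chain. By Proposition~\ref{prop:Delta-Bhat-preserve-flags}(1), $\q\Delta_t$ is an $\L$-chain. I then need to check that prepending $t$ keeps it an $\L$-chain, i.e.\ that the first coordinate of $\q\Delta_t$ is $\leqL t$: if $\q = (q_n,\dots,q_1)$ this first coordinate is $q_1 t$, and $q_1t \leqL t$ since $q_1 t \in S^I t$. (If $\q = \eps$ then $\q\Delta_t \cdot t = t$, which is trivially an $\L$-chain.) So $\q\Delta_t \cdot t \in \sFbar$, and then Proposition~\ref{prop:Delta-Bhat-preserve-flags}(2) gives that $\q\taubar_t = (\q\Delta_t \cdot t)B$ is an $\L$-chain of $\bH$-elements, as required.

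\emph{Step 2: $\taubar_t$ respects $\rho$.} Here I would use the closure of the class of $\rho$-respecting functions under composition, already noted in the excerpt. By Proposition~\ref{prop:Delta-Bhat-rho}, both $\Delta_t$ and $B$ respect $\rho$. The only extra ingredient is the operation $L_t \colon \q \mapsto \q \cdot t$ of appending the letter $t$ on the left; I claim this also respects $\rho$. Indeed, by Lemma~\ref{lem:rho-properties}(4), $(\q \cdot t)\rho = (\q\rho \cdot t)\rho$, which is exactly the statement that $\rho L_t \rho = L_t \rho$ after precomposing with $\rho$ (and using $\rho\rho = \rho$ from Lemma~\ref{lem:rho-properties}(1)(2)). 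Since $\taubar_t = \Delta_t \cdot L_t \cdot B$ as operators on $\L$-chains, and each factor respects $\rho$, so does $\taubar_t$. Strictly speaking $L_t$ is defined on all of $\sFbar$ and we only need it on $\L$-chains whose first letter is $\leqL t$ — but $\rho$ preserves that property, so there is no difficulty, and one can just as well extend $L_t$ to all of $\sFbar$ for the purpose of the composition argument.

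I do not expect any serious obstacle: the lemma is essentially a bookkeeping consequence of Propositions~\ref{prop:Delta-Bhat-preserve-flags} and~\ref{prop:Delta-Bhat-rho}. The only point requiring a moment's care is the verification that prepending the letter $t$ preserves the $\L$-chain property and respects $\rho$ — both of which follow immediately from $q_1 t \leqL t$ and from Lemma~\ref{lem:rho-properties}(4), respectively. If anything, the subtlety worth flagging is that $\taubar_t$ is a genuine function $\sFbar \to \sFbar$ (not merely a relation), which is why we may freely compose; this is automatic from $B$ and $\Delta_t$ being functions.
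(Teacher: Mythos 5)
Your proof is correct and follows essentially the same route as the paper's: Proposition~\ref{prop:Delta-Bhat-preserve-flags} for the first claim, and the decomposition of $\taubar_t$ into $\Delta_t$, appending $t$, and $B$, each of which respects $\rho$ (by Proposition~\ref{prop:Delta-Bhat-rho} and Lemma~\ref{lem:rho-properties}(4)), together with closure under composition, for the second. The extra care you take with $q_1t \leqL t$ and with the domain of the appending operation is exactly the (implicit) content of the paper's argument.
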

\begin{proof}
By Prop.~\ref{prop:Delta-Bhat-preserve-flags}(1), $\q \Delta_{t}$ is an $\L$-chain, and the first element of $\q \Delta_{t}$ is clearly $\L$-below $t$. Thus, $\q \Delta_{t} \cdot t$ is an $\L$-chain and so $\q \taubar_t$ is an $\L$-chain of $\bH$-elements by Prop.~\ref{prop:Delta-Bhat-preserve-flags}(2).
To see that $\taubar_t$ respects $\rho$, we note first that the function $\q \mapsto \q \cdot t$ respects $\rho$ by Lemma~\ref{lem:rho-properties}.
%\[(\q\Delta_{t} \cdot t)\rho = (\q\Delta_{t}\rho \cdot t) \rho = (\q\rho\Delta_{t} \rho \cdot t) \rho = (\q\rho\Delta_{t} \cdot t) \rho,\]
%where we use that $\Delta_{t}$ respects $\rho$, and Lemma~\ref{lem:rho-properties}(4) twice.
Thus $\taubar_t$, being a composition of %two
 three functions  that respect $\rho$ (as $\Delta_t$ and $B$ respect $\rho$ by Proposition~\ref{prop:Delta-Bhat-rho}), respects $\rho$.
\end{proof}

We are now ready to define our automaton $\A$.
\begin{definition}[The automaton $\A$ and flow $\Phi$]\label{def:tau}
We define the automaton $\A$ with set of states $\sF_\bH$, alphabet $T$, initial state $\eps$, and, for every $t \in T$, transition function $\tau_t \colon \sF_\bH \to \sF_\bH$, defined by $\tau_t := \taubar_t|_{\sFbar} \rho.$
Note that $\tau_t$ is well-defined by Lemma~\ref{lem:rho-properties}(5) and Lemma~\ref{lem:taubar-on-states}.
Define $\Phi \colon \sF_\bH \to 2^{T^I}$ by $\q\Phi := \q\omega$ for $\q\neq \eps$ and $\eps\Phi=\{I\}$.
\end{definition}

\begin{proposition}\label{prop:Phi-is-flow}
The map $\Phi$ is a $T$-flow on the automaton $\A$.
\end{proposition}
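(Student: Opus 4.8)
The plan is to verify the two defining conditions of a $T$-flow on $\A = (\sF_\bH, T, \tau, \eps)$: first that $I \in \eps\Phi$, and second that $(\q\Phi)t \subseteq (\q\tau_t)\Phi$ for every state $\q \in \sF_\bH$ and every letter $t \in T$. The first condition is immediate from the definition $\eps\Phi = \{I\}$. For the second, unravelling $\Phi$, we must show $(\q\omega)t \subseteq (\q\tau_t\omega)$, treating the edge case $\q = \eps$ separately, where we must show $It \subseteq (\eps\tau_t\omega)$, i.e., $t \in \eps\tau_t\omega$.

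The heart of the argument is a last-letter computation. Recall $\tau_t = \taubar_t|_{\sFbar}\rho$ and $\q\taubar_t = (\q\Delta_t \cdot t)B$. By Lemma~\ref{lem:rho-properties}(3), $\rho$ does not change the last letter, so $\q\tau_t\omega = \q\taubar_t\omega = ((\q\Delta_t \cdot t)B)\omega$. By the recursive definition of $B$ (Def.~\ref{def:diagonal-and-Bhat}), the last letter of $(\vec p \cdot s)B$ is $sb$; applying this with $s = t$, the word $\q\Delta_t \cdot t$ has last letter $t$, so $(\q\Delta_t \cdot t)B$ has last letter $tb$. Thus $\q\tau_t\omega = tb$, independently of $\q$. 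Now by Def.~\ref{def:blowup}\ref{itm:blows-up}, $t \subseteq tb$ (here $t$ is identified with $t\eta = \{t\} \in S$, so the inclusion is of subsets of $T$). Hence $t \in tb = \q\tau_t\omega$, which settles the $\q = \eps$ case.

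For a general state $\q \neq \eps$ we need the stronger inclusion $(\q\omega)t \subseteq \q\tau_t\omega = tb$. Here $\q\omega \in S = \Sat_{\bHbar}(T\eta)$ is a subset of $T$, and the product $(\q\omega)t$ is the product in $S \leq 2^T$, i.e.\ the subset $\{xt : x \in \q\omega\}$ of $T$. Since $\q$ is an $\L$-chain (indeed a strict one, being a state of $\A$), we have $\q\omega \leqL t$ after prepending $t$ — more precisely, in the word $\q\Delta_t \cdot t$ the letter immediately below $t$ is $(\q\omega)t$, and this lies $\L$-below $t$. Wait — I should instead argue directly: $\q\omega \cdot t \leqL t$ trivially, and I claim $\q\omega \cdot t \subseteq tb = tm_{L_t}$. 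Indeed, by Lemma~\ref{lem:preblowup-L}(3), since $\q\omega t \leqL t$ we get $\q\omega t \subseteq (\q\omega t)m_{L_t}$; but wait, that gives $\q\omega t$ inside its own blowup, not inside $tb$. Let me reconsider: I expect the correct route is that $(\q\omega)t \subseteq tm_{L_t} = tb$ follows because $tm_{L_t} = tb$ and $(\q\omega)t \subseteq (\{t\})(m_{L_t})$ needs $\q\omega \subseteq \{t\}m_{L_t}t^{-1}$ — this is getting complicated. The clean statement is: since $\q\omega \leqL t$ in $S$, writing $\q\omega = \alpha t$ for $\alpha \in S^I$, we get $(\q\omega)t \leqL t$ and by Lemma~\ref{lem:preblowup-L}(3) (with $q := (\q\omega)t$, $s := t$) indeed $(\q\omega)t \subseteq ((\q\omega)t)m_{L_t}$ — still not right. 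I anticipate the paper argues instead that the flow condition only needs $t \in \q\tau_t\Phi$, using that the relevant coordinate of $(\q\Delta_t \cdot t)B$ that sits directly above position-1 contains $(\q\omega)t$ by Prop.~\ref{prop:Delta-Bhat-preserve-flags}(4), combined with $t \subseteq tb$; the main obstacle is pinning down exactly which coordinatewise-containment statement (Prop.~\ref{prop:Delta-Bhat-preserve-flags}(4) versus a direct $\L$-chain argument) yields $(\q\omega)t \subseteq$ (last coordinate of $\q\tau_t$), and I would resolve it by observing that the last letter of $\q\Delta_t \cdot t$ is $t$ and the last letter of its $B$-image is $tb \supseteq t$, while Prop.~\ref{prop:Delta-Bhat-preserve-flags}(4) tells us each original coordinate embeds into the corresponding $B$-coordinate, so in particular $t \subseteq tb$, giving $(q\Phi)t = (\q\omega)t \subseteq$ something — I will present it as: since $\Phi$ returns the last letter, and $B$'s recursion forces the last letter to be $tb$ which contains $t$, the flow inequality reduces to the pointwise claim $(\q\omega)t \subseteq tb$ in $2^T$, which holds because $\q\omega t \leqL t$ and Lemma~\ref{lem:preblowup-L}(3) gives $\q\omega t \subseteq (\q\omega t) m_{L_t} \subseteq \cdots$; the genuinely subtle point, and the one I expect the author's proof to handle with a short lemma, is verifying this containment of subsets of $T$ rather than elements of $S$.
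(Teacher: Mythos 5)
There is a genuine error, and it comes from a mix-up of the paper's word-order convention. Words in $S^*$ are read from right to left: in $\q=(q_n,\dots,q_1)$ the letter $q_1$ is the \emph{first} letter and $q_n\,(=\q\omega)$ is the \emph{last} one. The recursion $(\q\cdot s)B=(\q\Delta_{m_{L_s}})B\cdot sb$ places $sb$ in the \emph{first} (rightmost) position, so the last letter of $(\q\Delta_t\cdot t)B$ is \emph{not} $tb$ when $\q\neq\eps$; it is the blown-up image of $(\q\omega)t$. Your claim ``$\q\tau_t\omega=tb$ independently of $\q$'' is therefore false for $\q\neq\eps$ (it is correct only in the base case $\q=\eps$, which you do handle correctly). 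Consequently the containment you try to reduce the flow condition to, namely $(\q\omega)t\subseteq tb$, is both unnecessary and false in general: for instance, if $t$ is an $\bH$-element then $tb=\{t\}$, while $(\q\omega)t$ can be a much larger subset of $T$. This is exactly why your attempts via Lemma~\ref{lem:preblowup-L}(3) kept yielding $(\q\omega)t\subseteq(\q\omega)t\,m_{L_t}$ rather than what you wanted --- the target was wrong.

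The correct (and short) argument is the one you gesture at but never assemble: $(\q\omega)t$ is the last letter of the $\L$-chain $\q\Delta_t\cdot t$; by Lemma~\ref{lem:rho-properties}(3), $(\q\tau_t)\omega=(\q\Delta_t\cdot t)B\,\omega$; and Proposition~\ref{prop:Delta-Bhat-preserve-flags}(4) says each coordinate of an $\L$-chain is contained in the corresponding coordinate of its image under $B$, so applying it at the last coordinate gives $(\q\omega)t\subseteq(\q\Delta_t\cdot t)B\,\omega=(\q\tau_t)\omega$. You invoke Proposition~\ref{prop:Delta-Bhat-preserve-flags}(4) only to conclude $t\subseteq tb$ (the first coordinate), which suffices for $\q=\eps$ but not for the general step; the coordinatewise containment must be applied at the last coordinate, where the letter is $(\q\omega)t$, not $t$.
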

\begin{proof}
Clearly, $I\in \eps\Phi$.  Also, for $t\in T$, we have that $\eps\Phi t=\{t\}\subseteq \{t\}b=(\eps\tau_t)\omega$.   
Let $\q \in \sF_\bH\setminus \{\eps\}$ and $t \in T$. We need to show that $(\q \omega) t \subseteq (\q \tau_t)\omega$. Note that $(\q \omega)t = (\q\Delta_{t} \cdot t)\omega$, and, using Lemma~\ref{lem:rho-properties}(3), $(\q \tau_t)\omega = (\q \Delta_{t} \cdot t)B\omega$. Thus, by Prop.~\ref{prop:Delta-Bhat-preserve-flags}(4), $(\q \omega) t \subseteq (\q \tau_t)\omega$.
\end{proof}

\section{$\A$ is an $\bHbar$-automaton}\label{sec:in-Hbar}
Let $\A$ be the automaton defined in Def.~\ref{def:tau}. The aim of this section is to prove (Corollary~\ref{cor:TA-in-Hbar}) that the transition semigroup $\T_\A$ of $\A$ is in $\bHbar$. To this end, we will define another semigroup $S^\bH$, which we will prove contains $\T_\A$ as a subsemigroup (Proposition~\ref{prop:SH-contains-taus}), and is itself in $\bHbar$ (Theorem~\ref{thm:SH-in-Hbar}).

\subsection*{Infinite wreath products and self-similarity}
Below, we recall a definition of the \emph{infinite wreath product} $R^\infty$ of a right transformation monoid $(X,R)$ and its action on the set $X^*$ of finite words over $X$. See \cite{GriNekSus2000} for more details, and for connections to transducers and actions on trees.

\begin{definition}[Infinite wreath product of a transformation monoid]\label{def:R-inf} Let $R$ be a monoid acting on a set $X$ on the right.
For $n \geq 0$, we denote by $X^n$ the set of words over $X$ of length $n$. For  any infinite sequence of functions $(F^n \colon X^{n-1} \to R)_{n \geq 1}$, we recursively define a function $F \colon X^* \to X^*$ by
\[\eps F := \eps, \text{ and } (x \cdot \q) F := x (\q F^n) \cdot (\q F) \text{ for any } x \in X, \q \in X^{n-1}, n \geq 1\]
and we say that the sequence of functions $F^n$ is an \emph{$R$-definition} of $F$. More explicitly, for $(q_n,\dots,q_1) \in X^*$,
\[
(q_n,\dots,q_1)F = (q_n((q_{n-1},\dots,q_1)F^n), \dots, q_2 (q_1F^2), q_1 (\eps F^1)).
\]

We define the \emph{infinite wreath product} $R^\infty$ to be the set of functions $F \colon X^* \to X^*$ that admit an $R$-definition.
\end{definition}

Note that any function $F$ in $R^\infty$ is length-preserving, and that, for any $\q = (q_n,\dots,q_1) \in X^*$, the $i^\th$ coordinate of $\q F$ only depends on the prefix $(q_i,\dots,q_1)$.

\begin{lemma}\label{lem:Rinf-monoid}
The set $R^\infty$ is a monoid of transformations on $X^*$.
\end{lemma}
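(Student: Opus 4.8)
The plan is to show that $R^\infty$ is closed under composition and contains the identity transformation, so that it forms a submonoid of the full transformation monoid on $X^*$. The identity is easy: taking $F^n$ to be the constant function with value $1_R$ (the identity of $R$) for every $n \geq 1$ yields, via the recursive definition, the identity function on $X^*$, since $x(1_R) = x$ for all $x \in X$. Hence $\id_{X^*} \in R^\infty$.

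For closure under composition, suppose $F, G \in R^\infty$ with $R$-definitions $(F^n)_{n\geq 1}$ and $(G^n)_{n\geq 1}$. I want to exhibit an $R$-definition $(H^n)_{n \geq 1}$ for $H := FG$. The natural guess comes from unravelling one step: for $x \in X$ and $\q \in X^{n-1}$,
\[
(x\cdot\q)FG = \bigl(x(\q F^n)\cdot(\q F)\bigr)G = x\bigl((\q F^n)((\q F)G^n)\bigr)\cdot(\q F G).
\]
This shows that $H$ satisfies the recursion $(x\cdot\q)H = x(\q H^n)\cdot(\q H)$ provided we set
\[
\q H^n := (\q F^n)\cdot\bigl((\q F)G^n\bigr) \in R,
\]
using the multiplication in $R$. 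Since $\q \mapsto \q F^n$ is a function $X^{n-1}\to R$ and $\q \mapsto (\q F)G^n$ is the composite of $F$ (which preserves length, so maps $X^{n-1}$ to $X^{n-1}$) with $G^n\colon X^{n-1}\to R$, the map $H^n$ is a well-defined function $X^{n-1}\to R$. One then checks by induction on word length that the function defined by the recursion from $(H^n)$ is exactly $FG$: the base case $\eps H = \eps = \eps F G$ is immediate, and the inductive step is precisely the displayed computation above, using that $\q H = \q F G$ by the induction hypothesis. Hence $FG \in R^\infty$.

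Finally, associativity of composition in $R^\infty$ is inherited from associativity of function composition on $X^*$, and we have just shown the set is closed under this operation and contains the identity. Therefore $R^\infty$ is a monoid of transformations on $X^*$, as claimed.

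I do not anticipate a serious obstacle here; the only point requiring a little care is getting the formula for $H^n$ right and confirming that the ``prefix-dependence'' remark preceding the lemma is respected (namely that $\q F^n$ and $(\q F)G^n$ genuinely depend only on $\q$, which is automatic from the fact that $F$ is length-preserving and $F^n, G^n$ are honest functions on $X^{n-1}$). The verification that the recursion built from $(H^n)$ reproduces $FG$ is a routine induction that I would state but not belabor.
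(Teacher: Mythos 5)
Your proof is correct and follows essentially the same route as the paper: the identity via the constant-$1_R$ definition, and closure under composition via the $R$-definition $\q H^n := (\q F^n)\,((\q F)G^n)$, verified by the same one-step unravelling computation. The paper merely states the displayed identity without spelling out the induction you mention, which is indeed routine.
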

\begin{proof}
The identity function is defined by letting $I^n$ be the function with constant value $1_R$ for every $n \geq 1$. If $(F^n)$ and $(G^n)$ are $R$-definitions of $F$ and $G$, respectively, then we define, for every $n \geq 1$, the function $H^n \colon X^{n-1} \to R$ by $\q H^n := (\q F^n) (\q FG^n)$ (using that $F$ is length-preserving). Then $(H^n)$ is an $R$-definition of $FG$; indeed, for any $x \in X$ and $\q \in X^*$, we have
\[ (x \cdot \q)FG = ( x (\q F^n) \cdot \q F ) G = x (\q F^n) (\q FG^n) \cdot \q F G = (x \q H^n) \cdot \q FG.\qedhere\]
\end{proof}

\begin{definition}[Shift action]\label{def:shift-action}
Let $F \in R^\infty$.
For any $\a \in X^*$, we denote by $F_\a \colon X^* \to X^*$ the function uniquely defined by the condition
\begin{equation}\label{eq:Fa-definition}
(\b \cdot \a)F = \b F_\a \cdot \a F \text{ for all } \b \in X^*.
\end{equation}
In words, $F_\a \colon X^* \to X^*$ sends any $\b \in X^*$ to the last $|\b|$ letters of the word $(\b \cdot \a)F$. We shall see momentarily that $F_\a\in R^\infty$.  Note that this is an (anti-)action of $X^*$ on $R^\infty$: $F_{\eps} = F$, and for any $\a, \a' \in X^*$, we have $(F_\a)_{\a'} = F_{\a' \cdot \a}$. (This is written as a left action in~\cite{HRS2010AP}.)

A subsemigroup $U$ of $R^\infty$ is \emph{self-similar} if, for any $F \in U$ and $a \in X$, we have $F_a \in U$. Note that, if $U$ is self-similar, then in fact $F_\a \in U$ for any $F \in U$ and $\a \in X^*$, as is easily shown by induction on the length of $\a$.

If $(F^n)$ $R$-defines $F$, then we write $\sigma_F := \eps F^1$, so that (\ref{eq:Fa-definition}) in particular gives, for any $\q = (q_n,\dots,q_1) \in X^+$,
\begin{equation}\label{eq:sigmaF-definition}
(q_n,\dots,q_1)F = (q_n,\dots,q_2) F_{q_1} \cdot q_1 \sigma_F.
\end{equation}
\end{definition}

\begin{lemma}\label{lem:Rinf-selfsimilar}
The monoid $R^\infty$ is self-similar.
\end{lemma}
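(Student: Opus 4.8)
The plan is to exhibit, for every $F \in R^\infty$ and every $a \in X$, an $R$-definition of the shifted map $F_a$ built directly out of an $R$-definition of $F$; by the definition of self-similarity this is all that is needed, since $R^\infty$ is a subsemigroup of itself. So fix $F \in R^\infty$ and an $R$-definition $(F^n)_{n \geq 1}$ of it, with $F^n \colon X^{n-1} \to R$, and fix $a \in X$. Define $G^n \colon X^{n-1} \to R$ by $\q G^n := (\q \cdot a)F^{n+1}$ for $\q \in X^{n-1}$; this is legitimate because $\q \cdot a \in X^n$, the domain of $F^{n+1}$. The claim is that $(G^n)_{n \geq 1}$ is an $R$-definition of $F_a$, which gives $F_a \in R^\infty$.

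To prove the claim it suffices to verify that $F_a$ satisfies the recursion attached to $(G^n)$ in Definition~\ref{def:R-inf}, namely $\eps F_a = \eps$ and $(x \cdot \q)F_a = x(\q G^n) \cdot (\q F_a)$ for all $x \in X$ and $\q \in X^{n-1}$: such a recursion has a unique length-preserving solution on $X^*$, which is by definition the map $R$-defined by $(G^n)$. The base case follows from (\ref{eq:Fa-definition}) with $\b = \eps$, which reads $aF = \eps F_a \cdot aF$; cancelling $aF$ in the free monoid $X^*$ gives $\eps F_a = \eps$. For the recursion step I would evaluate $((x \cdot \q) \cdot a)F$ in two ways. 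First, (\ref{eq:Fa-definition}) with $\b = x \cdot \q$ gives $((x \cdot \q) \cdot a)F = (x \cdot \q)F_a \cdot aF$. Second, rewriting $(x \cdot \q) \cdot a = x \cdot (\q \cdot a)$ by associativity of concatenation, the recursion of the $R$-definition $(F^n)$ applied to the length-$n$ word $\q \cdot a$ gives $(x \cdot (\q \cdot a))F = x\bigl((\q \cdot a)F^{n+1}\bigr) \cdot \bigl((\q \cdot a)F\bigr)$, and then (\ref{eq:Fa-definition}) with $\b = \q$ rewrites $(\q \cdot a)F = \q F_a \cdot aF$, so that $((x \cdot \q) \cdot a)F = \bigl( x((\q \cdot a)F^{n+1}) \cdot \q F_a \bigr) \cdot aF$. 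Comparing the two evaluations and cancelling the common one-letter suffix $aF$ yields $(x \cdot \q)F_a = x((\q \cdot a)F^{n+1}) \cdot \q F_a = x(\q G^n) \cdot \q F_a$, exactly the required identity. This establishes the claim, so $F_a \in R^\infty$, and since $F$ and $a$ were arbitrary, $R^\infty$ is self-similar.

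The computation is routine; the only point that needs care is the bookkeeping of lengths and of the reading direction of words — in particular, that forming $\q \cdot a$ shifts the index of the data $F^n$ defining $F$ up by one (producing $G^n$ from $F^{n+1}$), and that the word being cancelled on both sides, $aF = a\sigma_F$, is genuinely a single letter and genuinely common to both. I do not anticipate a real obstacle: the content of self-similarity is already built into the recursive shape of Definition~\ref{def:R-inf}.
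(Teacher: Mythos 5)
Your proposal is correct and follows essentially the same route as the paper: you define the $R$-definition $(G^n)$ by the identical shift $\q G^n := (\q\cdot a)F^{n+1}$ and then run the same induction/cancellation on $((x\cdot\q)\cdot a)F$ using (\ref{eq:Fa-definition}). The only cosmetic difference is that the paper names the map $G$ defined by $(G^n)$ and proves $(\b\cdot a)F = \b G\cdot aF$ by induction on $|\b|$, whereas you verify directly that $F_a$ satisfies the recursion attached to $(G^n)$ and appeal to uniqueness of its solution; these are the same argument.
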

\begin{proof}
Let $F \in R^\infty$ and $a \in X$. We will give an $R$-definition $(G^n)$ of $F_a$. For $n \geq 1$, $\c \in X^{n-1}$, define $\c G^n := (\c a)F^{n+1}$. Let $G \colon X^* \to X^*$ be the function with $R$-definition $(G^n)$. We show by induction on the length of $\b$ that $(\b \cdot a)F = \b G \cdot a F$. For $\b = \eps$, this is clear. Now assume that $(\c \cdot a)F = \c G \cdot a F$ for all words $\c$ with $|\c|=n-1$, and let $\b = b \cdot \c$, with $b \in X$. Then
\begin{align*}
(\b \cdot a) F = (b \cdot \c \cdot a) F &= b(\c \cdot a)F^{n+1} \cdot (\c \cdot a)F \\
&= b (\c G^n) \cdot \c G \cdot a F \\
&= (b \cdot \c) G \cdot a F = \b G \cdot a F.\qedhere
\end{align*}
\end{proof}

\subsection*{A monoid action on $S$ and the Zeiger property}
Recall that we continue to fix a semigroup $T$ and that we denote by $S$ the semigroup $\Sat_{\bHbar}(T\eta)$. %
We will apply the infinite wreath product construction to a particular monoid $R$ acting on (the underlying set of) $S$.
\begin{definition}\label{def:R}
Denote by $R$ the set of all functions $r \colon S \to S$ such that
\begin{enumerate}[label={(\roman*)}]
	\item $qr \leqR q$ for all $q \in S$,
	\item if $q  \sL  q'$ then $qr  \sL  q'r$,
	\item there exists $q_r \in S^I$ such that for any $q \in S$, if $qr \sR q$, then $qr = qq_r$.
\end{enumerate}
\end{definition}
\begin{proposition}\label{prop:R-monoid}
Let $R$ be the set of functions defined in Definition~\ref{def:R}.
\begin{enumerate}
\item The set $R$ is a transformation monoid on the set $S$.
\item The monoid $R$ contains any blowup operator $b$ on $S$.
\item For every $m \in S^I$, the right multiplication map, $r_m \colon q \mapsto qm$, is in $R$.
\end{enumerate}
\end{proposition}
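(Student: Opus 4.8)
The plan is to verify each of the three claims by checking the defining conditions (i)--(iii) of $R$ from Definition~\ref{def:R}, relying heavily on stability of the finite semigroup $S$ and on the properties of blowup operators collected in Lemma~\ref{lem:preblowup-L}.

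\emph{Part (1): $R$ is a transformation monoid.} First I would check that the identity map $\id_S$ lies in $R$: condition (i) is immediate since $q \leqR q$; (ii) is trivial; and (iii) holds with $q_{\id} := I$, since $q\id = q = qI$. Next I would show $R$ is closed under composition (functions applied on the right, so $q(rr') = (qr)r'$). For (i): $q(rr') = (qr)r' \leqR qr \leqR q$. For (ii): if $q \sL q'$ then $qr \sL q'r$ by (ii) for $r$, hence $(qr)r' \sL (q'r)r'$ by (ii) for $r'$. For (iii): set $q_{rr'} := q_r q_{r'}$; suppose $q(rr') \sR q$. Since $q \geqR qr \geqR qrr' \R q$, stability forces $q \R qr \R qrr'$ (using $x \geqR y$ and $x \R y$ gives the chain is all $\R$-equivalent, via $x\J y$ and stability). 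Then $qr \sR q$ gives $qr = qq_r$, and $qrr' \sR qr$ gives $qrr' = (qr)q_{r'} = qq_r q_{r'} = qq_{rr'}$, as desired. So $R$ is a submonoid of the full transformation monoid on $S$.

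\emph{Part (2): $R$ contains any blowup operator $b$.} Condition (i): $sb = sm_{L_s} \leqR s$ by Definition~\ref{def:blowup}\ref{itm:multiplier}. Condition (ii): this is exactly Lemma~\ref{lem:preblowup-L}(1). Condition (iii): I claim one cannot in general use a single $q_b \in S^I$ directly, so instead I would argue as follows. Suppose $qb \sR q$. By Lemma~\ref{lem:preblowup-L}(5), $qb \sR q$ implies $qb = q$, so $q$ is an $\bH$-element (by Definition~\ref{def:blowup}\ref{itm:H-elements}), and then $qb = q = qI$. So condition (iii) holds with $q_b := I$: whenever $qb \sR q$ we have $qb = q = qq_b$. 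Hence $b \in R$.

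\emph{Part (3): right multiplication $r_m$ is in $R$ for $m \in S^I$.} Condition (i): $qr_m = qm \leqR q$ since $qm \in qS^I$. Condition (ii): $\L$ is a right congruence, so $q \sL q'$ implies $qm \sL q'm$. Condition (iii): take $q_{r_m} := m$; trivially $qr_m = qm = qq_{r_m}$ for \emph{all} $q$, a fortiori when $qr_m \sR q$. Hence $r_m \in R$.

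The routine part is the closure-under-composition check in Part (1); the one spot requiring a little care is invoking stability correctly in condition (iii) for composites — one must pass through the $\mathscr{J}$-relation to upgrade an $\R$-equivalence of a $\leqR$-descending chain into termwise $\R$-equivalences, which is where finiteness of $S$ is essential. Parts (2) and (3) are then quick consequences of Lemma~\ref{lem:preblowup-L} and the fact that $\L$ is a right congruence.
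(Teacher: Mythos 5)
Your proof is correct and follows essentially the same route as the paper: check (i)--(iii) for composites with $q_{rr'}:=q_rq_{r'}$, use Lemma~\ref{lem:preblowup-L}(1) and (5) for the blowup operator with $q_b:=I$, and take $q_{r_m}:=m$. One small remark: in part (1), condition (iii), the appeal to stability and finiteness is unnecessary --- from $qrr' \leqR qr \leqR q \leqR qrr'$ the transitivity of the preorder $\leqR$ already makes all three elements $\R$-equivalent, which is exactly how the paper argues.
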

\begin{proof}
(1) Suppose that $r, r' \in R$. The composite $rr'$ clearly satisfies (i) and (ii) in Def.~\ref{def:R} because $r$ and $r'$ do. For (iii), if $qrr' \sR q$, since we have $qrr' \leqR qr \leqR q \leqR qrr'$, so that all of these elements are $\R$-equivalent. In particular, $qr = qq_r$, and $qrr' = qrq_{r'}$, so that $qrr' = qq_rq_{r'}$. Thus, we may define $q_{rr'} := q_rq_{r'}$.

(2) Let $b$ be a blowup operator on $S$. We verify (i) -- (iii) in Def.~\ref{def:R}. (i) Since $qb = qm_{L_q}$, we clearly have $qb \leqR q$. (ii) If $q  \sL  q'$, then $qb = qm_{L_q}  \sL  q'm_{L_q} = q'm_{L_{q'}} = q'b$. (iii) By Lemma~\ref{lem:preblowup-L}(5), if $qb \sR q$, then $qb = q$, so we can take $q_b := I$.

(3) Properties (i) and (ii) are obvious, and in (iii) we can take $q_{r_m} := m$.
\end{proof}

We will now use the infinite wreath product $R^\infty$ to define, in Definition~\ref{def:SH} below, a semigroup $\SHbar$ acting on $\sFbar_\bH$, and from there a semigroup $S^\bH$ acting on $\sF_\bH$. We first isolate an important property of certain transformations in $R^\infty$, which was also used in \cite{HRS2010AP}, inspired by Zeiger's proof of the Krohn-Rhodes Theorem \cite{Zeiger1}.

\begin{definition}\label{def:pre-Zeiger}
Let $F \in R^\infty$. We will say $F$ has the \emph{pre-Zeiger property} if, for any $\q = (q_n, \dots, q_1) \in S^*$ with $|\q| \geq 2$ and $(s_n,s_{n-1},\dots,s_1) := \q F$ such that $q_{n-1} \sR s_{n-1}$, and $q_n \sR s_n$, there exists $t \in S^I$ such that $q_{n-1}t = s_{n-1}$ and $q_n t = s_n$.
\end{definition}

Recall that in Section~\ref{sec:blowup-flow}, we fixed a blowup operator $b$ on $S = \Sat_{\bHbar}(T\eta)$ and a multiplier $m\colon S/{\sL}\to S^I$, which we used in Def.~\ref{def:diagonal-and-Bhat} to define an operator $B$ on $S^*$.

\begin{lemma}\label{lem:Bhat-Rinf}
The operators $\Delta_s$, for every $s \in S^I$, and $B$ are in $R^\infty$ and have the pre-Zeiger property.
\end{lemma}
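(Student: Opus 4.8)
The claim is that the diagonal operators $\Delta_s$ (for $s\in S^I$) and the operator $B$ both lie in $R^\infty$ and both have the pre-Zeiger property. I would treat the two operators separately, since $B$ is defined recursively in a way that is tailor-made to exhibit an $R$-definition, while $\Delta_s$ is even simpler.

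\emph{Membership in $R^\infty$.} For $\Delta_s$, the natural $R$-definition is the constant sequence $F^n := r_s$ (the right-multiplication map $q\mapsto qs$), which lies in $R$ by Proposition~\ref{prop:R-monoid}(3); unwinding Definition~\ref{def:R-inf} shows $(q_n,\dots,q_1)\Delta_s = (q_ns,\dots,q_1s)$, matching Definition~\ref{def:diagonal-and-Bhat}. For $B$, I would read off an $R$-definition directly from the recursion $(\q\cdot s)B = (\q\Delta_{m_{L_s}})B\cdot sb$. Comparing with $(x\cdot\q)F = x(\q F^n)\cdot(\q F)$, where here the word is read right-to-left so the ``first letter'' $x$ plays the role of the last coordinate $s$, one is led to set $\q F^n := $ the element of $S^I$ by which the prefix $\q$ gets multiplied before $B$ is applied, composed with $b$. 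Concretely, using the explicit formula from the (commented-out) Lemma about $\q B$, the $i$-th coordinate of $\q B$ is $q_i m^{\q}_1\cdots m^{\q}_{i-1} b$, so one takes $F^n(\q) := r_{m^{\q}_1\cdots m^{\q}_{n-1}}\, b$ as an element of $R$ — a composite of right-multiplications and the blowup operator $b$, all of which are in $R$ by Proposition~\ref{prop:R-monoid}(2),(3), and $R$ is closed under composition by part (1). A short induction then confirms this sequence $R$-defines $B$.

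\emph{The pre-Zeiger property.} This is the substantive part. Fix $\q=(q_n,\dots,q_1)$ with $|\q|\geq 2$ and write $\q F = (s_n,\dots,s_1)$, assuming $q_{n-1}\sR s_{n-1}$ and $q_n\sR s_n$. For $\Delta_s$ we have $s_i = q_is$, so we may simply take $t := s$ and both required equalities hold trivially. For $B$, the key structural fact is that the $(n-1)$-st and $n$-th coordinates of $\q B$ are obtained from $q_{n-1}$ and $q_n$ by multiplying \emph{on the right by the same element} of $S^I$ — namely $s_{n-1} = q_{n-1}\,c\,b$ and $s_n = q_n\,c\,b$ where $c := m^{\q}_1\cdots m^{\q}_{n-2}$ is a common prefix-product (here I am using that $q_n\leqL q_{n-1}$, so $q_n$ is acted on by a product that extends the one acting on $q_{n-1}$ by exactly the factors that are absorbed because of Lemma~\ref{lem:preblowup-L}(4)). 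So it suffices to show: if $q\,cb \sR q$ for an element $q$ and a fixed $c\in S^I$, then $q\,cb = q\,t$ for some $t\in S^I$ that works simultaneously for $q_{n-1}$ and $q_n$. Since $r_c\in R$ and $b\in R$, their composite $r_cb$ is in $R$, and Definition~\ref{def:R}(iii) furnishes a single element $q_{r_cb}\in S^I$ such that $q(r_cb)\sR q$ forces $q(r_cb) = q\cdot q_{r_cb}$. Applying this with $q=q_{n-1}$ and $q=q_n$ — both of which satisfy the $\sR$-hypothesis by assumption — yields $s_{n-1} = q_{n-1}\,q_{r_cb}$ and $s_n = q_n\,q_{r_cb}$, so $t := q_{r_cb}$ is the required element.

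\emph{Main obstacle.} The delicate point is bookkeeping the multipliers: verifying that the prefix-products $m^{\q}_i$ acting on coordinate $n-1$ and coordinate $n$ genuinely agree up to a factor that is absorbed, so that a \emph{single} element of $R$ (hence a single $q_{r_cb}\in S^I$) simultaneously governs both coordinates. This rests on $\q$ being an $\L$-chain together with Lemma~\ref{lem:preblowup-L}(4) (which says $q\leqL s$ with $s$ an $\bH$-element forces $qm_{L_s}=q$) and the analogous absorption used in establishing \eqref{eq:rhoproof-5} in the proof of Proposition~\ref{prop:Delta-Bhat-rho}. Once the coordinatewise formula $s_i = q_i\cdot(\text{common right multiplier})\cdot b$ is in hand, the pre-Zeiger property falls out immediately from Definition~\ref{def:R}(iii) applied to the element $r_c b\in R$. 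I would therefore organize the proof so that the $R$-definition computation and the coordinatewise formula are extracted first, and the pre-Zeiger verification is then a two-line application of Definition~\ref{def:R}(iii).
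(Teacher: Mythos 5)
Your treatment of $\Delta_s$ and of membership of $B$ in $R^\infty$ is fine and essentially matches the paper (the paper gives the $R$-definition $(q_{n-1},\dots,q_1)B^n := r_{m_{L_{q_1}}}\,((q_{n-1},\dots,q_2)\Delta_{m_{L_{q_1}}})B^{n-1}$, which unravels to exactly your formula $F^n(\q)=r_{m^{\q}_1\cdots m^{\q}_{n-1}}b$). The gap is in your pre-Zeiger argument for $B$. Your ``key structural fact'' --- that $s_{n-1}=q_{n-1}cb$ and $s_n=q_ncb$ for the \emph{same} $c=m^{\q}_1\cdots m^{\q}_{n-2}$ --- is false as stated. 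The explicit formula gives $s_n=q_n c\,m^{\q}_{n-1}b$ with the extra factor $m^{\q}_{n-1}=m_{L_{q_{n-1}c}}$, and absorbing it via Lemma~\ref{lem:preblowup-L}(4) requires $q_{n-1}c$ to be an $\bH$-element, which need not hold: e.g.\ take $q_n=q_{n-1}$ with $q_{n-1}c$ not an $\bH$-element, and the claimed identity $q_{n-1}c\,m_{L_{q_{n-1}c}}=q_{n-1}c$ fails (the left side is $q_{n-1}cb\neq q_{n-1}c$ by Definition~\ref{def:blowup}(ii)). The absorption used in \eqref{eq:rhoproof-5} worked only because there the element $q_1m=q_1b$ lay in the image of $b$; no such fact is available here without first invoking the hypothesis $q_{n-1}\sR s_{n-1}$ together with Lemma~\ref{lem:preblowup-L}(5) and Definition~\ref{def:blowup}(ii), a step you do not take.

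Even if you add that step, your argument still uses $q_n\leqL q_{n-1}$, i.e.\ that $\q$ is an $\L$-chain, whereas Definition~\ref{def:pre-Zeiger} quantifies over \emph{all} words $\q\in S^*$, and this generality is genuinely needed later: in Lemma~\ref{lem:pre-Zeiger-semigroup} the pre-Zeiger property of $G$ is applied to the word $\q F$, which need not be an $\L$-chain, and self-similarity shifts by arbitrary letters. So your route proves at best a weaker statement than the lemma claims, and the downstream closure arguments would no longer go through. The paper avoids both problems by not seeking a common multiplier formula at all: it inducts on $|\q|$, handling $n=2$ by deducing $q_2m_L\sR q_2m_Lb$ from the $\sR$-hypothesis and applying Lemma~\ref{lem:preblowup-L}(5) to get $t=m_L$, and for $n>2$ peeling off the first letter, checking $s_i\sR q_im_L$ for $i=n,n-1$, applying the induction hypothesis to $(q_nm_L,\dots,q_2m_L)$, and setting $t=m_Lt'$ --- an argument that uses only the $\sR$-hypotheses and never the $\L$-chain structure.
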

\begin{proof}
The sequence of functions $S^{n-1} \to R$, $n \geq 1$, which have constant value $r_s$ gives an $R$-definition of $\Delta_s$, and the pre-Zeiger property is trivially satisfied by $\Delta_s$.

We give an $R$-definition of $B$. Define $\eps B^1 := b$, and recursively, for $n \geq 2$, $(q_{n-1},\dots,q_1) \in S^{n-1}$, define
\[ (q_{n-1},\dots,q_1)B^n := r_{m_{L_{q_1}}} ((q_{n-1}, \dots, q_2)\Delta_{m_{L_{q_1}}})B^{n-1}.\]
Denote by $\tilde{B}$ the function with $R$-definition $(B^n)_{n \geq 1}$. We will show that, for every $\q \in S^*$ and $s \in S$,
\begin{equation} \label{eq:tildeB}
(\q \cdot s)\tilde{B} = \q \Delta_{m_{L_s}} \tilde{B} \cdot sb,
\end{equation}
which will establish that $\tilde{B} = B$, by Definition~\ref{def:diagonal-and-Bhat}. We prove (\ref{eq:tildeB}) by induction on the length of $\q$. If $\q = \eps$, then $s\tilde{B} = s (\eps B^1) = sb$, as required. For $n \geq 1$, we have
\begin{align*}
(\q \cdot s) \tilde{B} &= q_n (q_{n-1},\dots,q_1,s) B^{n+1} \cdot (q_{n-1}, \dots, q_1, s)\tilde{B} &\text{(Def. of $\tilde{B}$)} \\
&= q_n (q_{n-1},\dots,q_1,s)B^{n+1} \cdot (q_{n-1},\dots,q_1)\Delta_{m_{L_s}}\tilde{B} \cdot sb &\text{(Ind. hyp.)} \\
&= q_n m_{L_s} ((q_{n-1},\dots,q_1)\Delta_{m_{L_s}})B^n \cdot (q_{n-1},\dots,q_1)\Delta_{m_{L_s}}\tilde{B} \cdot sb  &\text{(Def. of $B^{n+1}$)} \\
&= \q \Delta_{m_{L_s}} \tilde{B} \cdot sb &\text{(Def. of $\tilde{B}$)}.
\end{align*}
%Using the notation of Lemma~\ref{lem:Bhat-explicit}, an $R$-definition of $B$ is obtained by defining $(q_{n-1},\dots,q_1)B^n := r_{m_1^\q \cdots m_{n-1}^\q}b$, which is in $R$ by Proposition~\ref{prop:R-monoid}.
We prove the pre-Zeiger property for $B$ by induction on the length of $\q = (q_n,\dots,q_1)$, $n \geq 2$. When $n = 2$, we have $(q_2,q_1)B = (q_2m_Lb,q_1b)$, where $L$ denotes the $\L$-class of ${q_1}$. Suppose that $q_2 \sR q_2m_Lb$ and $q_1 \sR q_1b$. Then $q_1b = q_1m_L$. Also,
\[ q_2 \geq_{\R} q_2m_L \geq_{\R} q_2m_Lb \sR q_2,\]
so that $q_2m_L \sR q_2m_Lb$, and by Lemma~\ref{lem:preblowup-L}(5), $q_2m_Lb = q_2m_L$. Thus, choosing $t := m_L$ gives $q_1t = q_1b$ and $q_2t = q_2m_Lb$. Now suppose that $n > 2$ and write $(s_n,\dots,s_1) := \q B$, with $q_i \sR s_i$ for $i = n, n-1$. Writing $L := L_{q_1}$, by definition of $B$, we have $(s_n,\dots,s_2) = (q_nm_{L},\dots,q_2m_{L})B$. For $i = n, n - 1$, we have that $s_i \leqR q_im_L$ since $B$ is in $R^\infty$, and, on the other hand, we have $q_im_L \leqR q_i \sR s_i$, so $s_i \sR q_im_L$. Thus, the induction hypothesis applies to the word $(q_nm_{L},\dots,q_2m_{L})$, and we can pick $t' \in S^I$ such that $s_i = q_im_Lt'$ for $i = n, n-1$. Choosing $t := m_Lt'$ yields the result.
\end{proof}

\begin{lemma}\label{lem:pre-Zeiger-semigroup}
The set of transformations $F \in R^\infty$ which satisfy the pre-Zeiger property is a self-similar subsemigroup of $R^\infty$.
\end{lemma}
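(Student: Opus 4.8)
The plan is to verify the two defining properties---closure under composition and closure under the shift $F\mapsto F_a$---directly, using that $R^\infty$ is already a monoid (Lemma~\ref{lem:Rinf-monoid}) which is self-similar (Lemma~\ref{lem:Rinf-selfsimilar}). The one structural fact underlying both verifications is that for $F\in R^\infty$ and $\q=(q_n,\dots,q_1)\in S^*$, the $i^\th$ coordinate of $\q F$ has the form $q_ir$ for some $r\in R$, hence is $\leqR q_i$ by Definition~\ref{def:R}(i). Consequently, whenever the last or second-to-last coordinate of an iterated image of $\q$ is $\sR$-equivalent to the corresponding letter of $\q$, every coordinate occurring at that position in an intermediate image is squeezed into the same $\sR$-class, so the pre-Zeiger hypotheses for the maps involved are automatically met.

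For composition, let $F,G$ have the pre-Zeiger property and let $\q=(q_n,\dots,q_1)$ with $n\geq 2$. Put $\q F=(u_n,\dots,u_1)$ and $\q(FG)=(\q F)G=(s_n,\dots,s_1)$, and suppose $q_{n-1}\sR s_{n-1}$ and $q_n\sR s_n$. By the observation above, $q_i\geq_{\R} u_i\geq_{\R} s_i$ for $i=n-1,n$, so $q_i\sR u_i\sR s_i$ in each case. The pre-Zeiger property of $F$ applied to $\q$ yields $t_1\in S^I$ with $q_{n-1}t_1=u_{n-1}$ and $q_nt_1=u_n$; the pre-Zeiger property of $G$ applied to $\q F$ (a word of length $n\geq 2$ whose top two coordinates are $u_n,u_{n-1}$) yields $t_2\in S^I$ with $u_{n-1}t_2=s_{n-1}$ and $u_nt_2=s_n$. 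Then $t:=t_1t_2\in S^I$ satisfies $q_{n-1}t=s_{n-1}$ and $q_nt=s_n$, so $FG$ has the pre-Zeiger property; since $FG\in R^\infty$ already, the set in question is a subsemigroup of $R^\infty$.

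For self-similarity, let $F$ have the pre-Zeiger property and $a\in S$, so that $F_a\in R^\infty$ by Lemma~\ref{lem:Rinf-selfsimilar}. Let $\q=(q_n,\dots,q_1)$ with $n\geq 2$, put $\q F_a=(s_n,\dots,s_1)$, and suppose $q_{n-1}\sR s_{n-1}$ and $q_n\sR s_n$. Apply $F$ to the word $\q\cdot a$, of length $n+1$: by the defining equation (\ref{eq:Fa-definition}) of the shift, $(\q\cdot a)F=\q F_a\cdot aF$, whose two highest coordinates are exactly $s_n$ and $s_{n-1}$, while the two highest coordinates of $\q\cdot a$ are $q_n$ and $q_{n-1}$. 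Since $q_{n-1}\sR s_{n-1}$ and $q_n\sR s_n$, the pre-Zeiger property of $F$ applied to $\q\cdot a$ produces $t\in S^I$ with $q_{n-1}t=s_{n-1}$ and $q_nt=s_n$; hence $F_a$ has the pre-Zeiger property. The only delicate points are the index bookkeeping (appending $a$ shifts each letter of $\q$ up by one position, so that the top two positions are preserved) and the invocation of Definition~\ref{def:R}(i) to collapse the relevant $\R$-preorder chains before applying the pre-Zeiger hypotheses; beyond that there is no real obstacle, since the substance of the pre-Zeiger property is that the coordinatewise right multipliers automatically supplied by the $R^\infty$-structure can be taken uniform, and this uniform form is evidently stable both under composing witnesses and under passing to a suffix.
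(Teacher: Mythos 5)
Your proposal is correct and follows essentially the same route as the paper: for composition, both arguments use Definition~\ref{def:R}(i) to see that the intermediate coordinates $u_i$ (the paper's $t_i$) satisfy $q_i \geq_{\R} u_i \geq_{\R} s_i$, hence are all $\R$-equivalent, and then compose the two witnesses supplied by the pre-Zeiger property of $F$ and $G$; for self-similarity, both invoke the defining equation (\ref{eq:Fa-definition}) to transfer the hypotheses on $\q F_a$ to the word $\q\cdot a$ and apply the pre-Zeiger property of $F$ directly. No gaps; the bookkeeping you flag (positions shifting when $a$ is appended, and reading $q_i r$ as the action of an element of $R$ so that Definition~\ref{def:R}(i) applies) is handled the same way in the paper.
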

\begin{proof}
Suppose that $F, G \in R^\infty$ have the pre-Zeiger property. Let $\q = (q_n,\dots,q_1)$ be in $S^*$ with $n \geq 2$ and suppose that $(s_n,\dots,s_1) := \q FG$ is such that $q_n \sR s_n$ and $q_{n-1} \sR s_{n-1}$. Write $(t_n,\dots,t_1) := \q F$. Note that, for any $1 \leq i \leq n$, $s_i \leqR t_i \leqR q_i$, using Def~\ref{def:R}(i), since $F$ and $G$ are in $R^\infty$. Therefore, for $i = n, n-1$, since $q_i \sR s_i$, we have that $s_i \sR t_i \sR q_i$. By the pre-Zeiger property for $G$, pick $u \in S^I$ such that $t_i u = s_i$ for $i = n, n-1$, and by the pre-Zeiger property for $F$, pick $v \in S^I$ such that $q_i v = t_i$ for $i = n, n - 1$. Then $q_i vu = s_i$ for $i = n,n-1$, as required.
For the self-similarity, assume $F$ has the pre-Zeiger property, and suppose that, for $a \in S$, we have $(s_n,\dots,s_1) = \q F_a$ and $q_n  \sR  s_n$,  $q_{n-1}  \sR  s_{n-1}$. By (\ref{eq:Fa-definition}) in Def.~\ref{def:shift-action}, $(\q \cdot a)F = (s_n,s_{n-1},\dots,s_1) \cdot aF$, so the pre-Zeiger property for $F$ applies.
\end{proof}

When a pre-Zeiger transformation in $R^\infty$ is applied to an $\L$-chain in $S$, we obtain the following property, which will be crucial in the proof of Theorem~\ref{thm:SH-in-Hbar}.
\begin{lemma}[Zeiger Property]\label{lem:Zeiger}
Suppose that $F \in R^\infty$ has the pre-Zeiger property. Then, for any $\q = (q_n,q_{n-1},\dots,q_1) \in \sFbar$ with $n \geq 2$, and $(s_n,s_{n-1},\dots,s_1) := \q F$ such that $q_{n-1} = s_{n-1}$ and $q_n  \sR  s_n$, we have $q_n = s_n$.
\end{lemma}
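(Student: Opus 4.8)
The plan is to feed the hypotheses directly into the pre-Zeiger property of $F$ and then use the $\L$-chain structure of $\q$ to strengthen the resulting conclusion from $q_n \sR s_n$ to $q_n = s_n$.

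First I would check that the hypotheses of Definition~\ref{def:pre-Zeiger} are met: since $q_{n-1} = s_{n-1}$ we certainly have $q_{n-1} \sR s_{n-1}$, and $q_n \sR s_n$ is assumed. Hence there is $t \in S^I$ with $q_{n-1}t = s_{n-1}$ and $q_n t = s_n$. The first equation says $q_{n-1}t = q_{n-1}$; by Lemma~\ref{lem:one-for-all-right} this means $t \in \St_R(H_{q_{n-1}})$, though all we actually need is the plain identity $q_{n-1}t = q_{n-1}$.

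Next I would use that $\q$ is an $\L$-chain, which gives $q_n \leqL q_{n-1}$, so we may fix $x \in S^I$ with $q_n = x q_{n-1}$. Then
\[
s_n = q_n t = (x q_{n-1}) t = x (q_{n-1} t) = x q_{n-1} = q_n,
\]
which is the desired conclusion.

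I do not expect a genuine obstacle at this step: the real work has already been carried out in establishing that $\Delta_s$ and $B$ — and hence every transformation in the self-similar semigroup they generate — have the pre-Zeiger property (Lemmas~\ref{lem:Bhat-Rinf} and~\ref{lem:pre-Zeiger-semigroup}). The only point worth isolating here is the elementary observation that in an $\L$-chain $q_n$ factors on the left through $q_{n-1}$, so any right multiplier fixing $q_{n-1}$ automatically fixes $q_n$; this is precisely what upgrades the "up to $\sR$" statement supplied by the pre-Zeiger property to an equality.
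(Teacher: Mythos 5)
Your proposal is correct and is essentially identical to the paper's proof: both apply the pre-Zeiger property (noting $q_{n-1}=s_{n-1}$ gives $q_{n-1}\sR s_{n-1}$) to get a common right multiplier $t$, and then use the $\L$-chain factorization $q_n = x q_{n-1}$ to conclude $s_n = q_n t = x q_{n-1} t = x q_{n-1} = q_n$. The aside about $\St_R(H_{q_{n-1}})$ is unnecessary but harmless.
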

\begin{proof}
By the pre-Zeiger property of $F$, pick $t \in S^I$ such that $q_{n-1}t = s_{n-1} = q_{n-1}$ and $q_n t = s_n$. Since $q_n \leqL q_{n-1}$, pick $u \in S^I$ such that $q_n = uq_{n-1}$. Then $s_n = q_n t = uq_{n-1}t = uq_{n-1} = q_n$, as required.
\end{proof}

\subsection*{A semigroup containing $\T_\A$}
We are now ready to define the semigroup $S^\bH$ that will contain $\T_\A$, the transition semigroup of the automaton $\A$. The semigroup $S^\bH$ will be built from a semigroup $\SHbar$, whose elements are asynchronous transducers, in the following sense.
\begin{definition}\label{def:SH}
For any $\f \in \sFbar \setminus \{\eps\}$ and $F \in R^\infty$, define a function $\fbar \colon S^* \to S^*$ by
\[\q \fbar := \q F \cdot \f.\]
We call any function $\fbar \colon S^* \to S^*$ which arises in this way an \emph{asynchronous $R$-transducer} and we call the non-empty $\L$-chain $\f$ the \emph{asynchronous part} of $\fbar$, and $F$ the \emph{synchronous part} of $\fbar$. Note that, if $\fbar$ is an asynchronous $R$-transducer, then $F$ and $\f$ are uniquely determined by $\fbar$: indeed, $\f = \eps \fbar$ and, for any $\q \in S^*$, $\q F$ is the length $|\q|$ suffix of $\q \fbar$.

Let $\SHbar$ be the set of asynchronous $R$-transducers $\fbar \colon S^* \to S^*$ such that
\begin{enumerate}[label={(\roman*)}]
\item the sets $\sFbar$ and $\sFbar_\bH$ are invariant under $\fbar$;
\item the restriction of $\fbar$ to $\sFbar$ respects $\rho$;
\item the synchronous part $F \in R^\infty$ has the pre-Zeiger property.
\end{enumerate}
Note that, in particular, (i) implies that $\f = \eps\fbar$ is in $\sFbar_\bH$. Also note that, for any function $\fbar$ in $\SHbar$, the composite $\fbar\rho$ maps $\sF_\bH$ into $\sF_\bH$.
We define
\[ S^\bH := \{ f \colon \sF_\bH \to \sF_\bH \mid f = \fbar|_{\sF_\bH}\rho \text{ for some } \fbar \in \SHbar\},\]
and we denote by $\pi \colon \SHbar \to S^\bH$ the function which sends $\fbar \in \SHbar$ to $\fbar\pi := \fbar|_{\sF_\bH}\rho$.

For $f \in S^\bH$, an element $\fbar \in \SHbar$ such that $\fbar\pi = f$ is called a \emph{representative} of $f$.
\end{definition}

\begin{notation}
In what follows, we will use the notational convention that whenever $\fbar$, $\gbar$, $\dots$ are asynchronous $R$-transducers, the capital letters $F, G, \dots$ denote their synchronous parts, and the boldface lowercase letters $\f, \g, \dots$ denote their asynchronous parts. Also, for $f, g, \dots$ in $\SH$, we will denote by $\fbar, \gbar, \dots$ an arbitrary representative of it in $\SHbar$.
\end{notation}

It is not immediate from Definition~\ref{def:SH} that $\SHbar$ and $S^\bH$ are indeed semigroups; we will prove this now. The following lemma shows how asynchronous transducers compose, and will also be used repeatedly in the proof of Theorem~\ref{thm:SH-in-Hbar}.
\begin{lemma}\label{lem:compose-transducers}
For any $\fbar, \gbar \in \SHbar$, $\q \in S^*$, we have
\[ \q \fbar\gbar = \q FG_{\f} \cdot \f G \cdot \g.\]
\end{lemma}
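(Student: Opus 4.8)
The plan is to unravel the definitions of the compositions involved using the shift action from Definition~\ref{def:shift-action}, and then apply everything to an arbitrary $\q \in S^*$.

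First I would recall that, by Definition~\ref{def:SH}, $\q\gbar = \q G \cdot \g$ and $\q\fbar = \q F \cdot \f$. So $\q\fbar\gbar = (\q F \cdot \f)\gbar = (\q F \cdot \f) G \cdot \g$. The key step is then to rewrite $(\q F \cdot \f)G$ using the defining property of the shift action, equation~(\ref{eq:Fa-definition}): for the word $\q F \cdot \f$, which is the concatenation of $\q F$ (playing the role of $\b$) and $\f$ (playing the role of $\a$), we have $(\q F \cdot \f)G = (\q F) G_\f \cdot \f G$. Substituting this in gives $\q\fbar\gbar = (\q F)G_\f \cdot \f G \cdot \g$. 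Finally I would observe that $(\q F)G_\f = \q(FG_\f)$ since composition of functions is read left-to-right, and that $FG_\f \in R^\infty$ because $R^\infty$ is closed under composition (Lemma~\ref{lem:Rinf-monoid}) and self-similar (Lemma~\ref{lem:Rinf-selfsimilar}), so $G_\f \in R^\infty$; hence the expression $\q FG_\f \cdot \f G \cdot \g$ makes sense as written. This yields exactly $\q\fbar\gbar = \q FG_\f \cdot \f G \cdot \g$, as claimed.

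There is essentially no obstacle here: the statement is a direct bookkeeping computation once the shift-action identity~(\ref{eq:Fa-definition}) is invoked with the right choice of decomposition. The only point requiring a moment's care is to apply~(\ref{eq:Fa-definition}) in the correct direction — one must view $\q F \cdot \f$ as having $\f$ as its ``$\a$-part'' (the portion on which $\g$ has already acted, in the sense that $\gbar$ first applies $G$ and then appends $\g$) — but since $\f = \eps\fbar$ sits at the right end of the word $\q\fbar$, this matches the convention that words are read from right to left. No new hypotheses on $\fbar$ or $\gbar$ beyond membership in $\SHbar$ are needed for this particular identity; the conditions (i)--(iii) of Definition~\ref{def:SH} will only be needed afterwards to check that $\fbar\gbar$ again lies in $\SHbar$.
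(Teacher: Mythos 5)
Your proof is correct and follows exactly the paper's argument: apply $\fbar$ then $\gbar$ to get $(\q F \cdot \f)G \cdot \g$, and rewrite $(\q F \cdot \f)G$ via the shift-action identity~(\ref{eq:Fa-definition}) with $\b = \q F$ and $\a = \f$. The extra remarks about $G_\f \in R^\infty$ and left-to-right composition are harmless but not needed for the identity itself.
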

\begin{proof}
Indeed,
\[ \q\fbar\gbar = (\q F \cdot \f)G \cdot \g = \q FG_\f \cdot \f G \cdot \g,\]
using (\ref{eq:Fa-definition}) for the second equality.
\end{proof}
%\begin{lemma}\label{lem:rho-permute-compose}
%If $f, g \colon \sFbar \to \sFbar$ are two functions such that $\rho f \rho = f \rho$ and $\rho g \rho = g \rho$, then $\rho fg \rho = fg \rho$.
%\end{lemma}
%\begin{proof}
%Indeed, $\rho fg \rho = \rho f \rho g \rho = f \rho g \rho = f g \rho$.
%\end{proof}
%We can now show that $\SHbar$ and $S^\bH$ are indeed semigroups.
\begin{proposition} \label{prop:SH-semigroup}
The set $\SHbar$ is a transformation semigroup on $S^*$, the set $S^\bH$ is a transformation semigroup on $\sF_\bH$, and the function $\pi \colon \SHbar \to S^\bH$ is a semigroup homomorphism.
\end{proposition}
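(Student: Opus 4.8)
The plan is to verify the three claims in order, using Lemma~\ref{lem:compose-transducers} as the computational engine and checking that the defining properties (i)--(iii) of $\SHbar$ are preserved under composition.

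\textbf{Step 1: $\SHbar$ is closed under composition.} Given $\fbar, \gbar \in \SHbar$, Lemma~\ref{lem:compose-transducers} gives $\q\fbar\gbar = \q FG_\f \cdot \f G \cdot \g$. I first observe that this is again an asynchronous $R$-transducer: its synchronous part is $FG_\f$, which lies in $R^\infty$ because $R^\infty$ is a monoid (Lemma~\ref{lem:Rinf-monoid}) and is self-similar (Lemma~\ref{lem:Rinf-selfsimilar}), so $G_\f \in R^\infty$; its asynchronous part is the non-empty $\L$-chain $\f G \cdot \g$, which is an $\L$-chain since $\f G$ is an $\L$-chain (as $\f \in \sFbar$, invariant under $\gbar$, hence $\f G \cdot \g = \f\gbar$ is an $\L$-chain and so is its suffix $\f G$) and its first letter is $\L$-below the last letter of $\g$ — actually the cleanest way is to note $\f G \cdot \g$ equals $\f\gbar$ which lies in $\sFbar_\bH$ by property (i) for $\gbar$, and then extract that its synchronous/asynchronous decomposition matches. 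Then I check (i): for $\q \in \sFbar$ (resp.\ $\sFbar_\bH$), $\q\fbar \in \sFbar$ (resp.\ $\sFbar_\bH$) by (i) for $\fbar$, and then $\q\fbar\gbar = (\q\fbar)\gbar \in \sFbar$ (resp.\ $\sFbar_\bH$) by (i) for $\gbar$. Property (ii): the restriction of $\fbar\gbar$ to $\sFbar$ is the composition of the restrictions of $\fbar$ and $\gbar$, each of which respects $\rho$, and compositions of functions respecting $\rho$ respect $\rho$ (as noted just before Proposition~\ref{prop:Delta-Bhat-rho}). Property (iii): the synchronous part $FG_\f$ must have the pre-Zeiger property; here $F$ has it, $G$ has it, hence $G_\f$ has it by self-similarity of the pre-Zeiger transformations (Lemma~\ref{lem:pre-Zeiger-semigroup}, using $F_\a \in U$ for $\a = \f \in S^*$), and the product $FG_\f$ has it since the pre-Zeiger transformations form a subsemigroup of $R^\infty$ (again Lemma~\ref{lem:pre-Zeiger-semigroup}). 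This establishes that $\SHbar$ is a transformation semigroup on $S^*$.

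\textbf{Step 2: $\pi$ is a homomorphism and $S^\bH$ is a semigroup.} I need $(\fbar\pi)(\gbar\pi) = (\fbar\gbar)\pi$ as functions $\sF_\bH \to \sF_\bH$. Unwinding, for $\q \in \sF_\bH$ the left side is $\q\fbar|_{\sF_\bH}\rho\,\gbar|_{\sF_\bH}\rho = ((\q\fbar\rho)\gbar)\rho$, while the right side is $(\q\fbar\gbar)\rho$. Since $\q$ is strict, $\q = \q\rho$, so I must show $((\q\fbar\rho)\gbar)\rho = (\q\fbar\gbar)\rho$, i.e.\ $\q\fbar\rho\gbar\rho = \q\fbar\gbar\rho$ on $\sFbar$. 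But $\gbar$ restricted to $\sFbar$ respects $\rho$, meaning $\rho\gbar\rho = \gbar\rho$ on $\sFbar$, and $\q\fbar \in \sFbar$ by invariance; applying this identity with input $\q\fbar$ gives exactly what is needed. (One must also note $\q\fbar\rho \in \sF_\bH$ and $\q\fbar \in \sFbar_\bH$ so that everything stays in the right sets, which follows from property (i).) Hence $\pi$ is a homomorphism; since $\SHbar$ is a semigroup and $S^\bH = \SHbar\pi$ is its homomorphic image in the monoid of transformations of $\sF_\bH$, $S^\bH$ is a transformation semigroup on $\sF_\bH$.

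\textbf{Main obstacle.} The routine bookkeeping is all absorbed into the already-proven lemmas, so the only genuinely delicate point is Step~1's verification that $FG_\f$ inherits the pre-Zeiger property and lies in $R^\infty$: one has to correctly identify $G_\f$ (not $G$) as the relevant shifted transformation and invoke both self-similarity statements — for $R^\infty$ itself (Lemma~\ref{lem:Rinf-selfsimilar}) and for the pre-Zeiger subsemigroup (Lemma~\ref{lem:pre-Zeiger-semigroup}) — in tandem. I would write this carefully, noting that self-similarity gives $F_a \in U$ for single letters $a$ and extends to $F_\a$ for all words $\a$ by the induction remarked on in Definition~\ref{def:shift-action}, applied with $\a = \f$.
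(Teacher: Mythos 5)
Your proof is correct and follows essentially the same route as the paper: decompose $\fbar\gbar$ via Lemma~\ref{lem:compose-transducers} into synchronous part $FG_\f$ and asynchronous part $\f G\cdot\g$, invoke Lemmas~\ref{lem:Rinf-monoid}, \ref{lem:Rinf-selfsimilar} and \ref{lem:pre-Zeiger-semigroup} together with conditions (i)--(ii) of Definition~\ref{def:SH}, and then verify $\q\fbar\rho\gbar\rho=\q\fbar\gbar\rho$ (using that $\gbar$ respects $\rho$) to see that $\fbar\gbar$ represents $fg$, so $\pi$ is a homomorphism and $S^\bH$ is a semigroup. Your verification is somewhat more detailed than the paper's (e.g.\ identifying $\f G\cdot\g=\eps\fbar\gbar$ and spelling out the self-similarity step), but there is no substantive difference.
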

\begin{proof}
To see that $\SHbar$ is a semigroup, let $\fbar, \gbar \in \SHbar$, and define $H := FG_\f$ and $\mathbf{h} := \f G \cdot \g$. By Lemma~\ref{lem:compose-transducers}, $\q\fbar\gbar = \q H \cdot \mathbf{h}$ for any $\q \in S^*$.
By Lemmas~\ref{lem:Rinf-monoid},~\ref{lem:Rinf-selfsimilar} and~\ref{lem:pre-Zeiger-semigroup}, $H$ is in $R^\infty$ and has the pre-Zeiger property. Also, $\mathbf{h} \in \sFbar \setminus \{\eps\}$ because it is a non-empty prefix of the $\L$-chain $\q \fbar \gbar$. Moreover, conditions (i) and (ii) in Def.~\ref{def:SH} also clearly hold for $\fbar \gbar$. Thus, $\SHbar$ is a semigroup. Now, if $f, g \in S^\bH$ and $\fbar$, $\gbar$ are representatives of $f$, then, for any $\q \in \sF_\bH$,
\[ \q fg = \q \fbar \rho \gbar \rho = \q \fbar \gbar \rho,\]
so $\fbar\gbar$ is a representative of $fg$. This shows that $S^\bH$ is a semigroup, and that $\pi$ is a homomorphism.
\end{proof}

\begin{proposition}\label{prop:SH-contains-taus}
The transition semigroup $\T_\A$ of the automaton $\A$ is a subsemigroup of the semigroup $S^\bH$.
\end{proposition}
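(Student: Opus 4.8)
The plan is to reduce the statement to the generators of $\T_\A$. Since $S^\bH$ is a semigroup by Proposition~\ref{prop:SH-semigroup} and $\T_\A$ is, by definition, the subsemigroup of transformations of $\sF_\bH$ generated by $\{\tau_t \mid t \in T\}$, it suffices to show that each $\tau_t$ lies in $S^\bH$. The candidate representative in $\SHbar$ will simply be $\taubar_t$ itself, viewed as a transformation of $S^*$; once we know $\taubar_t \in \SHbar$, Definition~\ref{def:tau} together with the definition of $\pi$ gives $\taubar_t\pi = \taubar_t|_{\sF_\bH}\rho = \tau_t$, so $\tau_t \in S^\bH$.

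The first substantive step is to rewrite $\taubar_t$ in the form prescribed by Definition~\ref{def:SH}, identifying its synchronous and asynchronous parts. Starting from $\q\taubar_t = (\q\Delta_t \cdot t)B$ (Definition~\ref{def:taubar}), I apply the shift-action identity~(\ref{eq:Fa-definition}) with $F = B$ and $\a = t$, and use $tB = tb$ from Example~\ref{exa:Bhat-up-to-three}, to obtain
\[ \q\taubar_t = (\q\Delta_t)B_t \cdot tB = \q(\Delta_t B_t) \cdot (tb). \]
Thus $\taubar_t$ is an asynchronous $R$-transducer with synchronous part $F := \Delta_t B_t$ and asynchronous part $\f := (tb)$. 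Here $F \in R^\infty$ because $\Delta_t, B \in R^\infty$ (Lemma~\ref{lem:Bhat-Rinf}), $R^\infty$ is self-similar (Lemma~\ref{lem:Rinf-selfsimilar}) so $B_t \in R^\infty$, and $R^\infty$ is closed under composition (Lemma~\ref{lem:Rinf-monoid}); and $\f$, being a one-letter word, is a nonempty $\L$-chain.

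Next I would verify the three defining conditions of $\SHbar$ for $\taubar_t$. Conditions (i) (invariance of $\sFbar$ and $\sFbar_\bH$) and (ii) ($\taubar_t|_{\sFbar}$ respects $\rho$) are precisely the content of Lemma~\ref{lem:taubar-on-states}. For condition (iii), both $\Delta_t$ and $B$ have the pre-Zeiger property by Lemma~\ref{lem:Bhat-Rinf}; since the pre-Zeiger transformations of $R^\infty$ form a self-similar subsemigroup by Lemma~\ref{lem:pre-Zeiger-semigroup}, the shift $B_t$ has the pre-Zeiger property, and hence so does the composite $F = \Delta_t B_t$. Therefore $\taubar_t \in \SHbar$, which completes the argument as outlined in the first paragraph.

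I do not expect a genuine obstacle here: all the real work has already been carried out in the preceding lemmas, and what remains is essentially bookkeeping. The one point that requires a little care is the decomposition of $\taubar_t$ into its synchronous and asynchronous parts via the shift action—one must respect the left-to-right composition convention and observe that appending the letter $t$ is absorbed into the asynchronous part as $tb$ (an $\bH$-element) rather than as $t$ itself.
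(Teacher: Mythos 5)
Your proof is correct and follows essentially the same route as the paper: reduce to the generators, show $\taubar_t\in\SHbar$ by exhibiting its synchronous and asynchronous parts, and invoke Lemma~\ref{lem:taubar-on-states} for conditions (i)--(ii) and Lemmas~\ref{lem:Rinf-monoid}, \ref{lem:Rinf-selfsimilar}, \ref{lem:Bhat-Rinf} and \ref{lem:pre-Zeiger-semigroup} for (iii). The only cosmetic difference is that you obtain the decomposition via the shift action, $(\q\Delta_t\cdot t)B=\q\Delta_t B_t\cdot tb$, whereas the paper unfolds the defining recursion of $B$ to write the synchronous part as $\Delta_t\Delta_{m_{L_t}}B$; these agree, since $B_t=\Delta_{m_{L_t}}B$ by that very recursion, so your argument is a faithful variant of the paper's.
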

\begin{proof}
We prove that, for every $t \in T$, $\taubar_t$ is in $\SHbar$. Since by definition $\tau_t = \taubar_t|_{\sFbar}\rho$, it will then follow that $\tau_t$ is in $S^\bH$, as required. %
Recall that we are viewing $T$ as a subsemigroup of $2^T$.  From the definitions of $\taubar_t$ and $B$, we see that, for any $\q \in S^*$,
\[\q \taubar_t = (\q\Delta_{t} \cdot t)B = \q\Delta_{t}\Delta_{m_{L_{t}}}B \cdot tb=\q T_t\cdot \bm{\tau}_t\] where
$T_t := \Delta_{t}\Delta_{m_{L_{t}}}B$ and $\bm{\tau}_t := tb$. By Lemmas~\ref{lem:Rinf-monoid},~\ref{lem:Bhat-Rinf}~and~\ref{lem:pre-Zeiger-semigroup}, $T_t$ is in $R^\infty$ and has the pre-Zeiger property. %
Properties (i) and (ii) in Def.~\ref{def:SH} hold by Lemma~\ref{lem:taubar-on-states}. %
\end{proof}

\subsection*{The semigroup is in $\bHbar$} We will prove in Theorem~\ref{thm:SH-in-Hbar} that the semigroup $S^\bH$, which contains $\T_\A$, is in $\bHbar$. This theorem generalizes \cite[Thm.~4.15]{HRS2010AP} by making more serious use of the Sch\"utzenberger group.

%Recall that, for any $\q \in S^+$, $\q\omega$ denotes the last element of $\q$.
The first lemma contains some simple observations about how the `last letter' operation $\omega$ interacts with elements of $\SHbar$ and $\SH$.
\begin{lemma}\label{lem:omega-and-SH}
Let $f \in \SH$ and let $\fbar$ be an arbitrary representative of $f$ with synchronous part $F$. Then
\begin{enumerate}
	\item for any $\q \in \sFbar \setminus \{\eps\}$, $\q\fbar\omega = \q F \omega \leqR \q\omega$.
	\item for any $\q \in \sF_\bH$, $\q\fbar\omega = \q f \omega$,
\end{enumerate}
\end{lemma}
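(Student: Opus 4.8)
The plan is to unwind the definitions; essentially everything reduces to careful bookkeeping with the right-to-left reading convention on $S^*$, so I expect no genuine obstacle.

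For the first equality in~(1), I would begin from $\q\fbar = \q F \cdot \f$ (Definition~\ref{def:SH}), where $\f \in \sFbar \setminus \{\eps\}$ is the asynchronous part and $F$ is length-preserving, so that for $\q \neq \eps$ the word $\q F$ is non-empty too. Since $\omega$ returns the last (i.e.\ leftmost) letter of a word, and the letters of $\f$ occupy the rightmost positions of $\q F \cdot \f$, appending $\f$ leaves that letter unchanged: $\q\fbar\omega = (\q F \cdot \f)\omega = \q F\omega$. For the inequality $\q F\omega \leqR \q\omega$, write $\q = (q_n,\dots,q_1)$ and fix any $R$-definition $(F^m)_{m \geq 1}$ of $F$; the explicit formula in Definition~\ref{def:R-inf} shows that the last coordinate of $\q F$ is $q_n\bigl((q_{n-1},\dots,q_1)F^n\bigr)$, i.e.\ it has the form $q_n r$ with $r \in R$, because each $F^m$ takes values in $R$. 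Definition~\ref{def:R}(i) then gives $q_n r \leqR q_n$, whence $\q F\omega = q_n r \leqR q_n = \q\omega$, completing~(1).

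For~(2), let $\q \in \sF_\bH$. Condition~(i) of Definition~\ref{def:SH} (invariance of $\sFbar_\bH$ under $\fbar$) gives $\q\fbar \in \sFbar_\bH$, and by the definition of $\pi$ we have $\q f = (\q\fbar)\rho$. Applying Lemma~\ref{lem:rho-properties}(3) to the $\L$-chain $\q\fbar$ then yields $\q f\omega = (\q\fbar)\rho\,\omega = (\q\fbar)\omega = \q\fbar\omega$. The only points that require attention are: reading $\omega$ off the correct end of each concatenation, and observing that the leftmost coordinate of $\q F$ is $\q\omega$ acted on by an element of $R$, so that Definition~\ref{def:R}(i) applies.
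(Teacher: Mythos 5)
Your proof is correct and follows essentially the same route as the paper's: part (1) reads the last letter off $\q F \cdot \f$ and uses the $R^\infty$ structure together with Definition~\ref{def:R}(i) to get $\q F\omega \leqR \q\omega$ (you merely make explicit, via the formula in Definition~\ref{def:R-inf}, what the paper states coordinatewise), and part (2) combines $\q f = \q\fbar\rho$ with the fact that $\rho$ preserves the last letter. Your citation of Lemma~\ref{lem:rho-properties}(3) is in fact the more accurate reference for that last step.
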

\begin{proof}
(1) Note that, since $\q \neq \eps$, $\q\fbar = \q F \cdot \f$ with $\q F \neq \eps$, so $\q\fbar\omega = \q F \omega$. Since $F \in R^\infty$, each coordinate of $\q F$ is ${\R}$-below the corresponding coordinate of $\q$, so, in particular, $\q F \omega \leqR \q \omega$. (2) By Lemma~\ref{lem:rho-properties}(2), $\q\fbar\omega = \q\fbar\rho\omega$, and the latter is $\q f \omega$.
\end{proof}

%Throughout the rest of this section, let $G$ be any subgroup of $S^\bH$. We denote the unit of $G$ by $u$, and we fix a representative $\ubar$ of $u$, with synchronous part $U$ and asynchronous part $\u$. We write $\u'$ for the (possibly empty) prefix of $\u$ obtained by deleting the last letter, so that $\u = \u\omega \cdot \u'$. We denote by $H$ the ${\sH}$-class of $\u \omega = \eps u \omega$. %The key part of the proof is the following lemma.
The following lemma identifies special properties of the last elements of $\q f$ and $\q g$, when $f$ and $g$ belong to a subgroup of $\SH$.
\begin{lemma}\label{lem:omega-eps}
Let $G$ be a subgroup of $\SH$ with unit $u$. Write $H$ for the $\sH$-class of $\eps u \omega$, and denote by $\u'$ the prefix of $\eps u$ obtained by deleting the last letter, i.e., $\eps u = \eps u \omega \cdot \u'$.  Then, for any $f, g \in G$:
\begin{enumerate}
\item for any $\q \in \sF_\bH$, the elements $\q f \omega$ and $\q g \omega$ are $\R$-equivalent;
\item the elements $\eps f \omega$ and $\eps g \omega$ are $\sH$-equivalent, and lie in $H$;
\item $\eps g = \eps g \omega \cdot \u'$.
\end{enumerate}
\end{lemma}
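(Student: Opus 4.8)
Here is how I would approach proving Lemma~\ref{lem:omega-eps}.

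\textbf{Part (1).} The plan is to use the group structure directly. Given $f,g\in G$, put $h:=g^{-1}f\in G$ (inverse taken in $G$); then $f=gh$ in $\SH$, so $\q f=(\q g)h$ for every $\q\in\sF_\bH$, and $\q g\neq\eps$ because applying any element of $\SH$ to a chain is the $\rho$-reduction of its image under an asynchronous transducer, whose asynchronous part is nonempty. Choosing a representative of $h$ with synchronous part $H$, Lemma~\ref{lem:omega-and-SH}(1) together with Lemma~\ref{lem:rho-properties}(3) gives $\q f\omega=(\q g)h\omega=(\q g)H\omega\leqR(\q g)\omega=\q g\omega$. Interchanging $f$ and $g$ yields the reverse inequality, hence $\q f\omega\R\q g\omega$.

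\textbf{Parts (2) and (3): reductions.} First record two consequences of $u$ being the identity of $G$: for every $f\in G$,
\[\eps f=\eps(uf)=(\eps u)f\qquad\text{and}\qquad\eps f=\eps(fu)=(\eps f)u .\]
Thus $\eps u$ is a fixed point of $u$, every $\eps f$ lies in the $G$-orbit of $\eps u$, and every $\eps f$ is itself a fixed point of $u$. Write $k:=|\eps u|\geq 1$, so $\eps u=\eps u\omega\cdot\u'$ with $\u'$ of length $k-1$. Next, isolate the last coordinate using the locality of the infinite wreath product: since the $i$-th coordinate of $\q F$ depends only on the first $i$ letters of $\q$ for $F\in R^\infty$, equation~(\ref{eq:Fa-definition}) gives, for a representative $\fbar$ of $f$, $(\eps u)\fbar=(\eps f\omega)\cdot(\u'\fbar)$, where $\eps f\omega=(\eps u)F\omega$ by Lemma~\ref{lem:omega-and-SH}. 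Applying $\rho$ and using $\u'\in\sF_\bH$ with Lemma~\ref{lem:rho-properties}(3),(4) yields
\[\eps f=\bigl((\eps f\omega)\cdot(\u' f)\bigr)\rho .\]
In particular (3) holds trivially for $f=u$, and for general $f$ it amounts to showing that $\eps f$ has length $k$ and coincides with $\eps u$ in its first $k-1$ coordinates (equivalently, that $f$ fixes the prefix $\u'$, which then forces $(\eps f\omega)\cdot\u'$ to be strict).

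\textbf{The induction, and deducing (2).} The core is to show that every $f\in G$ — not merely $u$ — fixes the prefix $\u'$. I would prove this by a downward induction on coordinate position, peeling $\L$-chains from the right, establishing that every member of the $G$-orbit of $\eps u$ has length $k$ and agrees with $\eps u$ in coordinates $1,\dots,k-1$. The inductive step is exactly where the Zeiger property (Lemma~\ref{lem:Zeiger}) is used: it promotes ``the synchronous part of a representative fixes one coordinate and preserves the $\R$-class of the next'' to ``it fixes that next coordinate'', the $\R$-class input being supplied by (1). Once (3) is established, (2) follows: its $\R$-equivalences are the case $g=u$ of (1), and for the $\sH$-refinement one uses (3) for $f$ and for $f^{-1}$ to extract, via~(\ref{eq:Fa-definition}), elements $r,r'\in R$ with $(\eps f\omega)r=\eps u\omega$ and $(\eps u\omega)r'=\eps f\omega$; then, exploiting that $rr'$ and $r'r$ fix $\eps f\omega$ resp.\ $\eps u\omega$, the stability of $S$, the equality $\St_R(H)=\St_R(L)$, Lemma~\ref{lem:one-for-all-right}, and the triviality of point stabilizers of Schützenberger groups (Lemma~\ref{lem:lift-Schutz-group}(1)), one concludes $\eps f\omega\in H_{\eps u\omega}=H$; since also $\eps g\omega\in H$, the two are $\sH$-equivalent.

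\textbf{Main obstacle.} The hard part is the induction establishing that $G$ fixes $\u'$. The synchronous part $F$ of a representative of $f$ need not fix $\eps u$, even though $f$ does after reduction: $(\eps u)F$ can fail to be strict, and $\eps f$ emerges only after appending the asynchronous part $\f$ and $\rho$-reducing the whole word. So one must keep precise track of where $\rho$ merges coordinates — both inside $(\eps u)F$ and at its junction with $\f$ — in order to apply the Zeiger property coordinate by coordinate with the correct hypotheses in place. By comparison, Part (1), the locality reduction, and the passage from (1) and (3) to (2) are routine.
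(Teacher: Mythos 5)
Your part (1) is exactly the paper's argument and is fine. The problems are in (2) and (3), where you invert the paper's order (you want (3) first, by induction, and then (2)), and the two steps you defer as ``the hard part'' are precisely where the proof has to live --- and, as sketched, neither goes through. First, your route to the $\sH$-statement in (2): the data you extract are of the form $\eps u\omega=(\eps f\omega)r$ and $\eps f\omega=(\eps u\omega)r'$ with $r,r'\in R$, which via Definition~\ref{def:R}(iii) become right-multiplication relations $\eps u\omega=(\eps f\omega)t$ and $\eps f\omega=(\eps u\omega)t'$ with $t,t'\in S^I$. Such relations carry only $\R$-information (which (1) already gives); they cannot force $\L$-comparability, hence cannot place $\eps f\omega$ in $H$: already in a $2\times 2$ rectangular band one has $xt\R x$ and $xtt'=x$ with $xt\notin H_x$, so stability, Lemma~\ref{lem:one-for-all-right} and trivial point stabilizers have nothing to act on without at least one $\L$-inequality, which your scheme never produces. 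Second, the induction for (3) is only announced. Lemma~\ref{lem:Zeiger} needs a coordinate that is fixed \emph{exactly} before it can promote an $\R$-equivalence at the next coordinate to an equality, and for a general $f\in G$ no such starting coordinate is available: the last letters of $\eps f$ and $\eps u$ genuinely differ, and (1) only gives $\R$-data about last letters of images of prefixes, which after $\rho$-reduction do not line up with the coordinates of $(\eps u)F$. In the paper the Zeiger mechanism is used only in the proof of Theorem~\ref{thm:SH-in-Hbar}, where the exact-fixing hypothesis is supplied by the induction $\q'f=\q'u$, valid because there $f\in K_\bH(G)$ and $\Gamma_R(H)\in\bH$; for an arbitrary $f\in G$, as here, that input is exactly what is missing, which is a sign the lemma must be proved by other means.

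The missing idea is much simpler and needs neither induction nor the Zeiger property: the $\L$-comparability in (2) comes from the \emph{asynchronous} part of a well-chosen representative. Write $g=(gf^{-1})f$ and take the representative $\bar{h}\fbar$ of $g$, with $\bar{h}$ a representative of $gf^{-1}$; then by Lemma~\ref{lem:compose-transducers}, $\eps g=(\eps\bar{h}F\cdot\f)\rho$, so the reduced $\L$-chain $\eps g$ contains a letter $\L$-equivalent to $\f\omega=\eps f\omega$, and since the last letter of an $\L$-chain is $\L$-below every letter, $\eps g\omega\leqL\eps f\omega$; symmetry gives $\L$-equivalence, and together with (1) at $\q=\eps$ this gives the $\sH$-statement and $\eps f\omega\in H$. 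With (2) in hand, (3) is immediate rather than the hard core: writing $g=gu$ gives $\eps g=(\eps\gbar U\cdot\u)\rho$ with $\ubar$ a representative of $u$, and since the overall last letter $\eps g\omega$ is $\L$-equivalent to $\u\omega=\eps u\omega$, all letters from $\u\omega$ onward are $\L$-equivalent and collapse under $\rho$ to the single letter $\eps g\omega$, leaving the strict chain $\eps g\omega\cdot\u'$. So you should prove (2) before (3) and replace your inductive machinery by this representative trick; as written, your proposal has a genuine gap at both of its central steps.
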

\begin{proof}
Let $\fbar, \gbar \in \SHbar$ be representatives of $f$ and $g$, respectively.

(1) Let $\q \in \sF_\bH$. Pick a representative $\bar{h}$ of $g^{-1}f$. Then, since $\pi$ is a homomorphism, $\gbar \bar{h}$ is a representative of $f$. By Lemma~\ref{lem:omega-and-SH}(1), since $\q \gbar \neq \eps$, we have $\q \gbar \bar{h}\omega \leqR \q \gbar \omega$. We now get
\[ \q f \omega = \q \gbar \bar{h}\omega \leqR \q \gbar \omega = \q g \omega,\]
where the first and last equality follow from Lemma~\ref{lem:omega-and-SH}(2). By symmetry, we conclude that $\q f \omega$ and $\q g \omega$ are ${\R}$-equivalent.

(2) Pick a representative $\bar{h}$ of $gf^{-1}$, so that $\bar{h}\bar{f}$ is a representative of $g$. Then
\[ \eps g = \eps \bar{h} \bar{f} \rho = (\eps \bar{h} F \cdot \f)\rho.\]
Thus, the $\L$-chain $\eps g$, in particular, contains a letter which is $\L$-equivalent to $\f \omega = \eps \fbar \omega$. Therefore, $\eps g \omega \leqL \eps \fbar \omega = \eps f \omega$, using Lemma~\ref{lem:omega-and-SH}(2). By symmetry, it follows that the two elements are $\L$-equivalent. By item (1) applied to $\q = \eps$, they are also $\R$-equivalent, and hence $\sH$-equivalent. Since in particular $u \in G$, we conclude that $\eps f \omega \sH  \eps u \omega$, so $\eps f \omega \in H$.

(3) Choose a representative $\bar{u}$ of $u$. Since $gu = g$, we have
\[\eps g = \eps gu = \eps \gbar \ubar \rho = (\eps\gbar U \cdot \u)\rho.\]
By item (2), $\u\omega = \eps u \omega$ is in particular $\L$-equivalent to $\eps g \omega$. Therefore, the $\L$-chain $\eps\gbar U \cdot \u$ reduces to $\eps g \omega \cdot \u'$, and the latter is a strict $\sL$-chain, because $\eps g \omega$ is ${\L}$-equivalent to $\eps u \omega$, and $\eps u = \eps u \omega \cdot \u'$ is a strict $\sL$-chain.
\end{proof}

We use what we have proved so far to construct a homomorphism from any subgroup of $\SH$ to a Sch\"utzenberger group. This will be a key ingredient in the proof of Theorem~\ref{thm:SH-in-Hbar}.

\begin{proposition}\label{prop:hom-to-Gamma}
Let $G$ be a subgroup of $\SH$ with unit $u$, and denote by $H$ the $\sH$-class of $\eps u \omega$. There exists a homomorphism $\phi \colon G \to \Gamma_R(H)$ such that, for every $f, g \in G$, $\eps g \omega (f\phi) = \eps gf \omega$.
\end{proposition}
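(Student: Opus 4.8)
The goal is to produce a well-defined map $\phi\colon G\to\Gamma_R(H)$ that records ``what the action of $f$ does to the last letter $\eps u\omega$,'' and to verify it is a homomorphism. The basic idea: for $f\in G$, the element $\eps uf\omega$ lies in $H$ by Lemma~\ref{lem:omega-eps}(2) (applied with $g$ replaced by $uf=f$, noting $u$ is the unit so $\eps u\omega\sH\eps f\omega$ and $\eps f\omega\in H$), and it is $\R$-equivalent to $\eps u\omega$ by Lemma~\ref{lem:omega-eps}(1); combined with $\sH$-equivalence this just restates $\eps f\omega\in H$. I want to say that right-multiplication by ``some element implementing $\eps u\omega\mapsto \eps f\omega$'' is an element of $\St_R(H)$, and take $f\phi$ to be its image in $\Gamma_R(H)$. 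Concretely: since $\eps f\omega\sH\eps u\omega$, in particular $\eps f\omega\leqR\eps u\omega$, so pick $t\in S^I$ with $\eps u\omega\, t=\eps f\omega$; then $\eps u\omega\, t\sH\eps u\omega$, so by Lemma~\ref{lem:one-for-all-right}, $t\in\St_R(H_{\eps u\omega})=\St_R(H)$. Define $f\phi:=$ image of $t$ in $\Gamma_R(H)$.

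\textbf{Well-definedness.} The element $t$ above is not unique, but its image in $\Gamma_R(H)$ is: if $\eps u\omega\,t=\eps u\omega\,t'$ with $t,t'\in\St_R(H)$, then the images of $t,t'$ agree on a point of $H$, hence are equal in $\Gamma_R(H)$ by Lemma~\ref{lem:trivial-point-stabilizers} (trivial point stabilizers). So $f\phi$ depends only on $\eps f\omega$ (equivalently on $f$). This also immediately gives the asserted formula in the special case $g=u$: $\eps u\omega(f\phi)=\eps u\omega\, t=\eps f\omega=\eps uf\omega$.

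\textbf{The formula $\eps g\omega(f\phi)=\eps gf\omega$ for all $g$.} This is the crux and will require the Zeiger Property (Lemma~\ref{lem:Zeiger}), which is exactly why the hypothesis on $F$ having the pre-Zeiger property was built into $\SHbar$. Fix $g,f\in G$ and a representative $\fbar\in\SHbar$ of $f$, with synchronous part $F$. We have $\eps gf=\eps g\fbar\rho=(\eps g\,F\cdot\f)\rho$. Now $\eps g$ is the $\L$-chain $\eps g\omega\cdot\u'$ by Lemma~\ref{lem:omega-eps}(3), so writing $\eps g\,F=(s_k,\dots,s_1)$ we have (for $\eps g$ of length $k$, assuming $k\geq 1$; the case where $\eps g$ has length $1$, i.e. $\u'=\eps$, should be handled separately but similarly) that the last letter $s_k$ of $\eps g\,F$ satisfies, by Lemma~\ref{lem:omega-and-SH}(1), $s_k=\eps g\fbar\omega\leqR\eps g\omega$. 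On the other hand $\eps g\fbar\omega=\eps gf\omega$ (Lemma~\ref{lem:omega-and-SH}(2)), and by Lemma~\ref{lem:omega-eps}(1) applied to $\q=\eps$ with the pair $gf,g\in G$ — wait, more carefully: $\eps gf\omega$ and $\eps g\omega$ are $\R$-equivalent by Lemma~\ref{lem:omega-eps}(1). So $s_k\sR\eps g\omega$. Consider the two-letter $\L$-chain $(\eps g\omega,\ \eps u\omega)$: it is an $\L$-chain since $\eps g\omega\sL\eps u\omega$ by Lemma~\ref{lem:omega-eps}(2), hence in $\sFbar$. Its image under $F$ has last-but-one letter equal to $\eps u\,F\omega$, which (the point-$\eps$ computation) equals $\eps u\fbar\omega=\eps uf\omega=\eps u\omega(f\phi)$... this is getting tangled; the clean route is: apply the pre-Zeiger property of $F$ directly to the $\L$-chain $\eps g=(g_k,\dots,g_1)$ with $g_{k-1},g_k$ and their $F$-images. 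The upshot I expect: there is a single $t\in S^I$ with $\eps u\omega\,t=\eps uf\omega$ (the point-$\eps$ last-but-one behaviour, since $\eps u\omega$ appears as the second-to-last letter of... no).

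Let me instead state the intended mechanism cleanly: the map $q\mapsto q\,F_{\u''}\,(\text{appropriate shift})$ acting on the last letter is realized by right-multiplication by one fixed $t\in S^I$ across the whole $\L$-class $L_{\eps u\omega}$, because $F\in R^\infty$ and the last-coordinate behaviour on an $\L$-chain is governed by Lemma~\ref{lem:preblowup-L}-style arguments together with the Zeiger Property forcing consistency; so $\eps g\omega(f\phi)=\eps g\omega\,t=\eps gf\omega$ simultaneously with $\eps u\omega\,t=\eps uf\omega$. I expect the actual argument to run: pick $t$ as in the definition of $f\phi$ (so $\eps u\omega\,t=\eps uf\omega$); since $\eps g\omega\sL\eps u\omega$, write $\eps g\omega=x\cdot\eps u\omega$; then $\eps g\omega\,t=x\cdot\eps u\omega\,t=x\cdot\eps uf\omega$, and one must identify $x\cdot\eps uf\omega$ with $\eps gf\omega$ — this last identification is where the Zeiger Property (Lemma~\ref{lem:Zeiger}), applied to a two-element $\L$-chain built from $\eps u$ and $\eps g$, does the work, because it pins down $\eps gf\omega$ from $\eps uf\omega$ and the $\R$-class information.

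\textbf{Homomorphism.} Given the formula, $\phi$ is multiplicative: for $f,g\in G$,
\[
\eps u\omega\,((fg)\phi)=\eps u(fg)\omega=\eps (uf)g\omega=\eps uf\omega\,(g\phi)=(\eps u\omega\,(f\phi))\,(g\phi)=\eps u\omega\,((f\phi)(g\phi)),
\]
using the formula with ``$g$''$=uf=f$ then ``$g$''$=f$... more precisely: apply the formula once with the pair $(f,g)$ at the base point got from $g=u$, giving $\eps uf\omega(g\phi)=\eps ufg\omega$; and the formula with $g=u$ gives $\eps u\omega(f\phi)=\eps uf\omega$; chaining, $\eps u\omega((f\phi)(g\phi))=\eps uf\omega(g\phi)=\eps ufg\omega=\eps u(fg)\omega=\eps u\omega((fg)\phi)$. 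Since the action of $\Gamma_R(H)$ on $H$ has trivial point stabilizers (Lemma~\ref{lem:trivial-point-stabilizers}) and $\eps u\omega\in H$, this forces $(fg)\phi=(f\phi)(g\phi)$. That $u\phi$ is the identity of $\Gamma_R(H)$ is the case $f=u$ of the definition (take $t=I$).

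\textbf{Main obstacle.} The genuine difficulty is the formula $\eps g\omega(f\phi)=\eps gf\omega$ for arbitrary $g\in G$ (not just $g=u$): one must show the same $t\in S^I$ that implements $\eps u\omega\mapsto\eps uf\omega$ also implements $\eps g\omega\mapsto\eps gf\omega$. This is precisely the role of the Zeiger Property (Lemma~\ref{lem:Zeiger}): applying it to a suitable $2$-letter $\L$-chain with last two letters related to $\eps g\omega$ and $\eps u\omega$, together with Lemma~\ref{lem:omega-eps}(3) (which guarantees $\eps g$ and $\eps u$ share the common tail $\u'$) and the $\R$-class identifications from Lemma~\ref{lem:omega-eps}(1), forces the last-letter behaviour to be uniform across the $\L$-class. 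I would also need to be slightly careful with the degenerate case where $\eps u$ has length $1$ (so $\u'=\eps$), handling it by a direct argument rather than invoking the two-letter chain.
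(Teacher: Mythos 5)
Your construction of $\phi$ at the base point ($g=u$), the well-definedness via trivial point stabilizers, and the multiplicativity argument are all fine, but they are the easy parts. The crux — that $\eps g \omega (f\phi) = \eps gf \omega$ for \emph{arbitrary} $g \in G$ — is exactly the step you label ``the main obstacle,'' and you do not prove it: your fallback (write $\eps g\omega = x\,\eps u\omega$, compute $\eps g\omega\,t = x\,\eps uf\omega$, and then ``identify $x\,\eps uf\omega$ with $\eps gf\omega$'') simply restates what has to be shown. Moreover, the tool you point to, the Zeiger property (Lemma~\ref{lem:Zeiger}) or the pre-Zeiger property, is not the right one here: those are statements comparing two \emph{coordinates of a single} $\L$-chain with its image under one $F \in R^\infty$, whereas what you need is uniformity of the last-letter behaviour across the \emph{different} chains $\eps g = \eps g\omega \cdot \u'$, $g \in G$, which share the tail $\u'$; two distinct letters cannot both sit immediately above the same tail $\u'$ inside one word, so no single application of (pre-)Zeiger produces the required common multiplier. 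In the paper the Zeiger property is not used in this proposition at all — it is reserved for Theorem~\ref{thm:SH-in-Hbar}.

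The missing ingredient is Definition~\ref{def:R}(iii) combined with self-similarity of $R^\infty$. By Lemma~\ref{lem:omega-eps}(3) and Lemma~\ref{lem:omega-and-SH}(2), for every $g \in G$ one has $\eps gf\omega = (\eps g\omega \cdot \u')\fbar\,\omega = \eps g\omega\, F_{\u'}$, where $F$ is the synchronous part of a representative $\fbar$ of $f$. Since $F_{\u'} \in R^\infty$ (Lemma~\ref{lem:Rinf-selfsimilar}), its restriction to one-letter words is a single element $\sigma_f \in R$ independent of $g$, so $\eps gf\omega = \eps g\omega\,\sigma_f$. By Lemma~\ref{lem:omega-eps}(1), $\eps g\omega\,\sigma_f = \eps gf\omega \sR \eps g\omega$, so Definition~\ref{def:R}(iii) supplies one element $q_{\sigma_f} \in S^I$ with $\eps g\omega\,\sigma_f = \eps g\omega\, q_{\sigma_f}$ \emph{simultaneously for all} $g \in G$; taking $g = u$ and using Lemma~\ref{lem:one-for-all-right} shows $q_{\sigma_f} \in \St_R(H)$, and one defines $f\phi := r_{q_{\sigma_f}}$. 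This is the uniformity your sketch lacks. (Incidentally, your worry about the degenerate case $\u' = \eps$ is unnecessary: $F_\eps = F$ and the computation above is uniform in the length of $\eps u$.)
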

\begin{proof}
Let $\u'$ be defined as in the statement of Lemma~\ref{lem:omega-eps} and let $\bar{f}$ be a representative of $f$. Write $\sigma_f$ for the element of $R$ given by the restriction of $F_{\u'}$ to one-letter words, and pick $q_{\sigma_f} \in S^I$ as in Definition~\ref{def:R}(iii).
For any $g \in G$, we have
\begin{align*}
\eps gf \omega &= (\eps g \omega \cdot \u')f \omega  &\text{(by Lemma~\ref{lem:omega-eps}(3))} \\
&= (\eps g \omega \cdot \u')\fbar \omega &\text{(by Lemma~\ref{lem:omega-and-SH}(2))} \\
&= ((\eps g \omega \cdot \u')F \cdot \f) \omega &\text{(Definition of $\fbar$)}\\
&= \eps g \omega F_{\u'} &\text{(Definition~\ref{def:shift-action})} \\
&= \eps g \omega \sigma_f  &\text{(Definition of $\sigma_f$)} \\
&= \eps g \omega q_{\sigma_f} &\text{(Definition~\ref{def:R}(iii)),}
\end{align*}
where the last equality uses that, by Lemma~\ref{lem:omega-eps}(1), $\eps g \omega$ and $\eps gf \omega = \eps g \omega \sigma_f$ are {$\R$}-equivalent.
In particular, applying this equality to $g := u$, we have $\eps u \omega q_{\sigma_f} = \eps uf \omega = \eps f \omega$, which is $\sH$-equivalent to $\eps u \omega$ by Lemma~\ref{lem:omega-eps}(2). Thus, the element $q_{\sigma_f}$ lies in the stabilizer of the $\sH$-class $H$, by Lemma~\ref{lem:one-for-all-right}. We define $f\phi := r_{q_{\sigma_f}}$, right multiplication by $q_{\sigma_f}$, which is an element of the right Sch\"utzenberger group $\Gamma_R(H)$ which has the stated property.
Moreover, if $f, f' \in G$, then
\[\eps u \omega (f \phi)(f' \phi) = \eps f \omega (f'\phi) = \eps ff' \omega = \eps u \omega (ff' \phi),\]
so that $(f\phi)(f'\phi) = ff' \phi$, because the action of $\Gamma_R(H)$ on $H$ has trivial point stabilizers (Lemma~\ref{lem:trivial-point-stabilizers}).
\end{proof}

\begin{theorem}\label{thm:SH-in-Hbar}
The semigroup $S^\bH$ is in $\bHbar$.
\end{theorem}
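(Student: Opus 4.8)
The plan is to show that every subgroup $G$ of $\SH$ is trivial, which is exactly the statement that $\SH \in \bHbar$, since $\SH$ is already a finite semigroup. So fix a subgroup $G \leq \SH$ with unit $u$, and let $H$ be the $\sH$-class of $\eps u \omega$ as in Proposition~\ref{prop:hom-to-Gamma}. That proposition hands us a homomorphism $\phi \colon G \to \Gamma_R(H)$ with $\eps g \omega (f\phi) = \eps gf\omega$ for all $f, g \in G$. The first step is to argue that $\phi$ is \emph{injective}: if $f\phi$ is the unit of $\Gamma_R(H)$, then for every $g \in G$ we get $\eps gf \omega = \eps g \omega$, and I would like to promote this equality of last letters to the equality $\eps gf = \eps g$, i.e. $gf = g$ in $\SH$, forcing $f = u$. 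This promotion is where the Zeiger Property (Lemma~\ref{lem:Zeiger}) enters: writing $gf = \overline{g}\,\overline{f}$ via Lemma~\ref{lem:compose-transducers} (so the synchronous part of a representative of $gf$ is $FG_{\f}$-type, with the pre-Zeiger property by Lemma~\ref{lem:pre-Zeiger-semigroup}), and using that the $\L$-chains $\eps g$ and $\eps gf$ share all but possibly the last letter (by Lemma~\ref{lem:omega-eps}(3), since both equal $\eps gu$-reductions and the penultimate data is governed by $\u'$), the hypothesis $\eps gf \omega \sR \eps g \omega$ together with the matching penultimate letter lets Lemma~\ref{lem:Zeiger} conclude $\eps gf \omega = \eps g \omega$ \emph{as elements}, and then that the two strict $\L$-chains coincide entirely. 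Hence $\phi$ is injective and $G$ embeds in $\Gamma_R(H)$.

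Next I would analyze which subgroup of $\Gamma_R(H)$ the image $G\phi$ can be. The key point is that $\eps u \omega$, being the last letter of an element of $\sF_\bH$, is an \emph{$\bH$-element} of $S$, i.e. $\Gamma_R(H) = \Gamma_R(H_{\eps u \omega})$ already lies in $\bH$. Therefore $G \cong G\phi \leq \Gamma_R(H) \in \bH$, and since $\bH$ is closed under subgroups, $G \in \bH$. But wait — this only shows subgroups of $\SH$ lie in $\bH$, which is literally the statement $\SH \in \bHbar$. So in fact once injectivity of $\phi$ is established the theorem follows immediately, because $H \subseteq \sF_\bH$ forces $H$ (as an $\sH$-class \emph{of $S$}, namely $H_{\eps u \omega}$ with $\eps u \omega$ an $\bH$-element) to have $\Gamma_R(H) \in \bH$. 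Let me restructure: the real content is entirely in (a) Proposition~\ref{prop:hom-to-Gamma} giving the homomorphism, and (b) showing it is injective via the Zeiger property.

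So the proof I would write is: \emph{Let $G \leq \SH$ be a subgroup with unit $u$. By Proposition~\ref{prop:hom-to-Gamma} there is a homomorphism $\phi \colon G \to \Gamma_R(H)$, $H = H_{\eps u \omega}$, with $\eps g\omega(f\phi) = \eps gf\omega$. Since $\eps u \in \sF_\bH$, its last letter $\eps u \omega$ is an $\bH$-element, so $\Gamma_R(H) \in \bH$. It remains to see $\phi$ is injective; then $G$ embeds in $\Gamma_R(H) \in \bH$, so $G \in \bH$, proving $\SH \in \bHbar$.} For injectivity, suppose $f \in \ker\phi$. Fix $g \in G$ and a representative $\gbar$ with synchronous part $G'$; using Lemma~\ref{lem:omega-eps}(3) write $\eps g = \eps g\omega \cdot \u'$ and $\eps gf = \eps gf\omega \cdot \u'$ with the \emph{same} $\u'$ (this is the content needed: both reduce through $\eps gu$, whose penultimate portion is $\u'$). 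From $f \in \ker\phi$ we have $\eps gf \omega = \eps g\omega$; by Lemma~\ref{lem:omega-eps}(1) these are also $\R$-equivalent trivially (they are equal). Now consider a representative of $gf$ given by $\gbar\,\fbar$, whose synchronous part $H'$ has the pre-Zeiger property (Lemma~\ref{lem:pre-Zeiger-semigroup}); applying Lemma~\ref{lem:Zeiger} to the $\L$-chain $\eps g$ — wait, I need the input chain to have length $\geq 2$ and need to track the right two coordinates. The cleanest formulation: $\eps gf$ and $\eps g$ are strict $\L$-chains whose last letters are equal and whose preceding letters agree ($\u'$); a strict $\L$-chain is determined by its letters, so $\eps gf = \eps g$, giving $gf = g$ and $f = u$. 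The one subtlety I must verify is that the penultimate letters genuinely agree, i.e. that Lemma~\ref{lem:omega-eps}(3) applies with a single common $\u'$ for both $g$ and $gf$ — this follows since $gf \cdot u = gf$ and $g \cdot u = g$, so part (3) applies to each, yielding the \emph{same} $\u'$ (the one read off from $\eps u$).

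The main obstacle I anticipate is precisely this injectivity step: making rigorous that equality of last letters plus the shared penultimate tail forces equality of the full strict $\L$-chains, and confirming that all the pieces ($\eps gf\omega = \eps g\omega$, the common $\u'$, strictness) line up so that no hidden length-one or empty-chain edge case breaks the argument. The Zeiger Property (Lemma~\ref{lem:Zeiger}) is the tool designed exactly for upgrading "$\R$-equivalent and penultimate letters match" to "equal," so I expect the argument to go through, but the bookkeeping around $\rho$, representatives, and which synchronous part has the pre-Zeiger property is where care is needed. Everything else — that $\Gamma_R(H) \in \bH$ because $\eps u \omega$ is an $\bH$-element, and that this finishes the proof — is immediate from the definitions and the hypothesis that $\bH$ is a variety.
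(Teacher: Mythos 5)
Your overall strategy is the right one and is essentially the paper's: use the homomorphism $\phi\colon G\to\Gamma_R(H)$ of Proposition~\ref{prop:hom-to-Gamma}, note that $\eps u\omega$ is an $\bH$-element so $\Gamma_R(H)\in\bH$, and reduce everything to showing that elements of $\ker\phi$ (in particular of $K_\bH(G)$) act trivially. Your treatment of the state $\eps$ is also correct: for $f\in\ker\phi$ one gets $\eps f\omega=\eps u\omega$ and then $\eps f=\eps u$ via Lemma~\ref{lem:omega-eps}(3); no Zeiger argument is even needed for that step.

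The gap is the jump ``$\eps gf=\eps g$ for all $g\in G$, hence $gf=g$ in $\SH$, hence $f=u$.'' Elements of $\SH$ are transformations of the whole state set $\sF_\bH$, so $gf=g$ means $\q\,gf=\q\,g$ for \emph{every} strict $\L$-chain $\q$ of $\bH$-elements, not just for $\q=\eps$; agreement at the single state $\eps$ does not force equality of the transformations, and nothing in your argument addresses the action on a general state. This is exactly where the bulk of the paper's proof lives: it shows, by induction on $|\q|$, that $\q f=\q u$ for all $\q\in\sF_\bH$ whenever $f\in K_\bH(G)$, using the base case $\eps f=\eps u$ (your case), the identity (\ref{eq:uf-trick}) expressing $\q f=(\q UF_\u\cdot\u F\cdot\f)\rho$, Lemma~\ref{lem:omega-eps}(1), and the Zeiger property (Lemma~\ref{lem:Zeiger}) applied to the synchronous part $F$ when $|\q|=1$ and to its shift $F_\u$ when $|\q|>1$, together with the shift identities of Definition~\ref{def:shift-action}. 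Your invocation of the Zeiger lemma concerns only the chains $\eps g$ versus $\eps gf$, i.e.\ still only the state $\eps$, so the cases $|\q|\geq 1$ --- the genuinely technical content of the theorem --- are missing. (A smaller slip, which you later correct yourself: $\SH\in\bHbar$ requires its subgroups to lie in $\bH$, not to be trivial.)
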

\begin{proof}
Let $G$ be a subgroup of $S^\bH$. Denote the unit of $G$ by $u$. We will prove that, for any $f \in K_\bH(G)$, and
\begin{equation}\label{eq:f-equals-u}
\text{for all } \q \in \sF_\bH, \quad \q f = \q u.
\end{equation}
From (\ref{eq:f-equals-u}), it will follow that $K_\bH(G)$ is trivial, so that $G$ is in $\bH$, as required. As usual, choose representatives $\bar{f}$ and $\bar{u}$ of $f$ and $u$, respectively.

Let $f \in K_\bH(G)$ be arbitrary. The proof of (\ref{eq:f-equals-u}) is by induction on the length $|\q|$ of $\q$. We first need to establish separately the cases $|\q| = 0$ and $|\q| = 1$.

{\it Case 1: $\q = \eps$.} Let $\phi \colon G \to \Gamma_R(H)$ be the homomorphism of Proposition~\ref{prop:hom-to-Gamma}. Since $\eps u \omega$ is an $\bH$-element, $\Gamma_R(H)$ lies in $\bH$. Therefore, since $f \in K_\bH(G)$, we have $f \in \ker \phi$, i.e., $f\phi$ acts on $H$ as the identity. In particular,
\[\eps f \omega = \eps u f \omega = \eps u \omega (f\phi) = \eps u \omega.\]
Using Lemma~\ref{lem:omega-eps}(3), from this we obtain
\[ \eps f = \eps f \omega \cdot \u' = \eps u \omega \cdot \u' = \eps u\]
where we retain the notation of that lemma.

For the next two cases, we will repeatedly use that, for any $\q \in \sF_\bH$,
\begin{equation}\label{eq:uf-trick}
 \q f = \q uf = \q\ubar\fbar\rho = (\q U F_\u \cdot \u F \cdot \f)\rho,
 \end{equation}
where the second equality holds because $\ubar$ and $\fbar$ respect $\rho$, and the third equality holds by Lemma~\ref{lem:compose-transducers}.

{\it Case 2: $|\q| = 1$.} Write $q$ for the single letter in $\q$. We aim to apply the Zeiger Property (Lemma~\ref{lem:Zeiger}) for $F$. Write $(q_n,\dots,q_1) := q\ubar$, and $(s_n,\dots,s_1) := q\ubar F$. We have
$q\ubar = qU \cdot \u$, so  $q_n = qU$ and $q_{n-1} = \u\omega$. Also, using (\ref{eq:Fa-definition}),
\[q\ubar F = (qU \cdot \u)F = qUF_\u \cdot \u F,\]
and so $s_n = qUF_\u$, $s_{n-1} = \u F \omega$.

Now, by Lemma~\ref{lem:omega-eps}(1), $q_n = qu\omega$ is $\R$-equivalent to $qf\omega$, and the latter is in fact equal to $q U F_\u = s_n$, by (\ref{eq:uf-trick}).
Also, $q_{n-1} = \u\omega = \u F \omega = s_{n-1}$, because
\[ \u\omega = \eps u \omega = \eps f \omega = \eps uf\omega = \u f \omega = \u F \omega,\]
where we have used Case 1 and Lemma~\ref{lem:omega-and-SH}. Thus, the conditions of Lemma~\ref{lem:Zeiger} are fulfilled for $F$ applied to $q\ubar$. We conclude that
\begin{equation}\label{eq:length1-part1}
qU= q_n = s_n = qUF_\u.
\end{equation}
Also, using Case 1 again, we have
\begin{equation}\label{eq:length1-part2}
\u\rho = \eps u = \eps f = \eps uf = \u \rho \fbar \rho = \u \fbar \rho = (\u F \cdot \f)\rho.
\end{equation}
We now compute:
\begin{align*}
qf &= (qUF_{\u} \cdot \u F \cdot \f)\rho &\text{(using (\ref{eq:uf-trick}))}\\
&= (qU \cdot \u F \cdot \f)\rho &\text{(using (\ref{eq:length1-part1}))}\\
&= (qU \cdot (\u F \cdot \f)\rho)\rho &\text{(Lemma~\ref{lem:rho-properties})}\\
&= (qU \cdot \u\rho)\rho &\text{(using (\ref{eq:length1-part2}))}\\
&= (qU \cdot \u)\rho = q\ubar\rho = qu. &\text{(Lemma~\ref{lem:rho-properties})}
\end{align*}

{\it Case 3: $|\q| > 1$.} Write $\q = q \cdot \q'$, so that $|\q'| = |\q| - 1 > 0$, and, by the induction hypothesis, $\q'f = \q'u$. We aim to apply the Zeiger Property for $F_{\u}$. Write $(t_n,\dots,t_1) := \q U$ and $(v_n,\dots,v_1) := \q U F_{\u}$. Using (\ref{eq:Fa-definition}), we compute:
\begin{equation} \label{eq:qU}
\q U = (q \cdot \q')U = qU_{\q'} \cdot \q'U,
\end{equation}
and, hence,
\begin{equation} \label{eq:qUFu}
\q U F_\u = (qU_{\q'} \cdot \q'U)F_{\u} = qU_{\q'}F_{\q'U \cdot \u} \cdot \q'UF_{\u}.
\end{equation}
From (\ref{eq:qU}), $t_n = qU_{\q'} = \q U \omega = \q u \omega$ by Lemma~\ref{lem:omega-and-SH}, and $t_{n-1} = \q'U\omega = \q'u\omega$, again by Lemma~\ref{lem:omega-and-SH}, since $\q' \neq \eps$. From (\ref{eq:uf-trick}) and Lemma~\ref{lem:omega-and-SH}, we have that $v_n = \q U F_{\u}\omega = \q f \omega$. From (\ref{eq:qUFu}) and (\ref{eq:uf-trick}) applied to $\q'$ we have that $v_{n-1} = \q'UF_{\u}\omega = \q'f\omega$. By the induction hypothesis, $t_{n-1} = \q'u\omega = \q'f\omega = v_{n-1}$, and by Lemma~\ref{lem:omega-eps}(1), $t_n  \sR  v_n$. Thus, the conditions of Lemma~\ref{lem:Zeiger} are fulfilled for $F_{\u}$ applied to $\q U$, and we conclude that
\begin{equation}\label{eq:Zeiger-result}
qU_{\q'}F_{\q'U \cdot \u}=\q UF_{\u} =v_n=t_n= qU_{\q'}.
\end{equation}
We now compute:
\begin{align*}
\q f &= (\q U F_\u \cdot \u F \cdot \f)\rho &\text{(using (\ref{eq:uf-trick}))}\\
&= (qU_{\q'}F_{\q'U \cdot \u} \cdot \q'UF_{\u} \cdot \u F \cdot \f)\rho &\text{(using (\ref{eq:qUFu}))}\\
&= (qU_{\q'} \cdot \q'UF_{\u} \cdot \u F \cdot \f)\rho &\text{(using (\ref{eq:Zeiger-result}))}\\
&= (qU_{\q'} \cdot (\q'UF_{\u} \cdot \u F \cdot \f)\rho)\rho &\text{(Lemma~\ref{lem:rho-properties})}\\
&= (qU_{\q'} \cdot \q'f)\rho &\text{(using (\ref{eq:uf-trick}))}\\
&= (qU_{\q'} \cdot \q'u)\rho &\text{(induction hypothesis)}\\
&= (qU_{\q'} \cdot \q'U \cdot \u)\rho &\text{(Definition of $\ubar$ and Lemma~\ref{lem:rho-properties})}\\
&= (\q U \cdot \u)\rho = \q u &\text{(using (\ref{eq:Fa-definition}))},
\end{align*}
as required.
\end{proof}

\begin{corollary}\label{cor:TA-in-Hbar}
The transition semigroup $\T_\A$ of the automaton $\A$ is in $\bHbar$.
\end{corollary}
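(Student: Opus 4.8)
This corollary is an immediate consequence of the two main results already proved in this section, together with the defining closure properties of a variety. The plan is as follows. First, recall from Proposition~\ref{prop:SH-contains-taus} that each transition function $\tau_t$ ($t \in T$) lies in $S^\bH$, and hence the transition semigroup $\T_\A = \langle \tau_t \mid t \in T\rangle$ is a subsemigroup of $S^\bH$. Next, invoke Theorem~\ref{thm:SH-in-Hbar}, which tells us that $S^\bH \in \bHbar$. Since $\bHbar$ is a variety of finite semigroups and is therefore closed under taking subsemigroups, we conclude $\T_\A \in \bHbar$.

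There is essentially no obstacle here: all the work has been front-loaded into Proposition~\ref{prop:SH-contains-taus} (the containment $\T_\A \subseteq S^\bH$, which required checking that each $\taubar_t$ satisfies conditions (i)--(iii) of Definition~\ref{def:SH}) and into Theorem~\ref{thm:SH-in-Hbar} (the genuinely technical statement that every subgroup of $S^\bH$ lies in $\bH$, via the Zeiger Property argument). The corollary itself is a one-line deduction, and its only role is to record the conclusion we actually need downstream, namely that $\A$ is an $\bHbar$-automaton in the sense of Section~\ref{sec:prelim}.
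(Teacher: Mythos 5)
Your proposal is correct and matches the paper's proof exactly: the paper also derives the corollary immediately from Proposition~\ref{prop:SH-contains-taus} and Theorem~\ref{thm:SH-in-Hbar}, with closure of the variety $\bHbar$ under subsemigroups doing the rest.
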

\begin{proof}
Immediate from Proposition~\ref{prop:SH-contains-taus} and Theorem~\ref{thm:SH-in-Hbar}.
\end{proof}

\section{Proof of Main Theorem}\label{sec:mainproof}
%We show how the different pieces fit together to prove Theorem~\ref{thm:main}. %This section also briefly summarizes the entire construction for the reader's convenience.
\begin{proof}[Proof of Theorem~\ref{thm:main}]
Let $T$ be a finite semigroup. Note that $\PL_{\bHbar}(T)$ is $\bHbar$-saturated: conditions (i) and (ii) in Def.~\ref{def:saturated} are true for the pointlike sets with respect to any variety, and (iii) follows from Propositions~\ref{prop:pointlike-monad} and \ref{prop:H-kernel-pointlike}. Thus, since $\PL_{\bHbar}(T)$ contains $T\eta$, we have $\Sat_{\bHbar}(T\eta) \subseteq \PL_{\bHbar}(T)$.
We now prove the converse. Let $\A = (\sF_\bH,T,\tau,i)$ be the automaton and $\Phi$ the $T$-flow defined in Definition~\ref{def:tau}. Corollary~\ref{cor:TA-in-Hbar} shows that $\A$ is an $\bHbar$-automaton. Now let $X\neq \emptyset$ be an $\bHbar$-pointlike subset of $T$. By Proposition~\ref{prop:pointlike-flow}, $X \subseteq \q\Phi$ for some $\q \in \sF_\bH$. Since $X$ is non-empty and does not contain $I$, $\q \neq \eps$. Therefore, $X \subseteq \q\omega$, which is an element of $S = \Sat_{\bHbar}(T\eta)$. Since $\Sat_{\bHbar}(T\eta)$ is downward closed, $X \in \Sat_{\bHbar}(T\eta)$, as required.
\end{proof}

\section{Alternative descriptions of the $\bHbar$-pointlikes and reducibility}
In this section, we assume familiarity with the theory of relatively free profinite semigroups and pseudoidentities; for more background, see, e.g.,~\cite{Almeida:book,RS2009}.  Let $A=\{a_1,\ldots, a_n\}$ be a finite alphabet.  If $\V$ is a variety of finite semigroups, then the free pro-$\V$ semigroup on $A$ is denoted by $\widehat{F}_{\V}(A)$.  However, we generally denote the free profinite semigroup on $A$ by  $\widehat{A^+}$.  Let $u\in \widehat{F}_{\V}(A)$.  If $S$ is a pro-$\V$ semigroup and $s_1,\ldots, s_n\in S$, then $u(s_1,\ldots, s_n)$ denotes the value of $u$ under the unique continuous homomorphism $\widehat{F}_{\V}(A)\to S$ sending $a_i$ to $s_i$.  Let us write $\pi_{\mathbf V}\colon \widehat{A^+}\to \widehat{F}_{\mathbf V}(A)$ for the canonical projection.

Let $e$ be an idempotent of the minimal ideal of $\widehat{A^+}$.  Then it was observed by Almeida and Volkov~\cite{AV03}, that $ea_1e,\ldots, ea_ne$ freely topologically generate a free profinite subgroup $G(e)$ of $\widehat{A^+}$, which is a closed subgroup of the maximal subgroup $G_e=e\widehat{A^+}e$.  Indeed, let $\mathbf G$ denote the variety of finite groups.  Then under the projection $\pi_{\mathbf G}\colon \widehat{A^+}\to \widehat{F}_{\mathbf G}(A)$, we have that $\pi_{\mathbf G}(e)=1$ and so $\pi_{\mathbf G}(ea_ie) = a_i$.  Thus $\pi_{\mathbf G}$ restricts to a continuous epimorphism of profinite groups $G(e)\to \widehat{F}_{\mathbf G}(A)$ which splits via $a_i\mapsto ea_ie$ by the universal property of $\widehat{F}_{\mathbf G}(A)$.  If $u\in \widehat{F}_{\mathbf G}(A)$, we will use $u_e$ as a shorthand for $u(ea_1e,\ldots,ea_ne)\in G(e)$, as we shall later evaluate $u_e$ in different finite semigroups.

We now wish to give a description of the $\bH$-kernel of a finite group coming from a basis of group pseudoidentities for $\bH$.

\begin{proposition}\label{p:Hkernel.verbal}
Let $\bH$ be a variety of finite groups and let $E$ be a collection of group pseudoidentities of the form $u=1$ defining $\bH$.  Let $G$ be a finite group.  Then $K_{\bH}(G)$ is the subgroup generated by all values of left hand sides of elements of $E$ in $G$.
\end{proposition}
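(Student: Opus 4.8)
The plan is to show that the subgroup $N$ of $G$ generated by all values $u(g_1,\dots,g_n)$, where $u=1$ ranges over $E$ and $g_1,\dots,g_n$ range over $G$, equals $K_\bH(G)$. First I would check that $N$ is normal in $G$: conjugating a value $u(g_1,\dots,g_n)$ by $h\in G$ gives $u(g_1^h,\dots,g_n^h)$, which is again a value of a left-hand side of an element of $E$, so the generating set of $N$ is closed under conjugation and hence $N\trianglelefteq G$. Next I would show $G/N\in\bH$. Since $\bH$ is defined by $E$, it suffices to check that every pseudoidentity $u=1$ in $E$ holds in $G/N$; but for any $\bar g_1,\dots,\bar g_n\in G/N$ with lifts $g_i\in G$, the element $u(g_1,\dots,g_n)$ lies in $N$ by definition, so $u(\bar g_1,\dots,\bar g_n)=1$ in $G/N$. (Here one uses that evaluation of a pseudoword commutes with the quotient homomorphism $G\onto G/N$, which is continuous.) This gives $K_\bH(G)\subseteq N$.

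For the reverse inclusion $N\subseteq K_\bH(G)$, I would use that $G/K_\bH(G)\in\bH$, so every $u=1$ in $E$ holds in $G/K_\bH(G)$. Writing $q\colon G\onto G/K_\bH(G)$ for the quotient map, for any generator $u(g_1,\dots,g_n)$ of $N$ we have $q(u(g_1,\dots,g_n))=u(q(g_1),\dots,q(g_n))=1$, so $u(g_1,\dots,g_n)\in\ker q=K_\bH(G)$. Since $K_\bH(G)$ is a subgroup containing all the generators of $N$, we get $N\subseteq K_\bH(G)$, completing the proof.

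The only genuine subtlety — and the step I'd be most careful about — is the interaction between pseudoidentities (pseudowords in $\widehat{F}_\bG(A)$, i.e.\ limits of words) and homomorphisms: one needs that for a group homomorphism $\varphi\colon G\to G'$ and a pseudoword $u\in\widehat{F}_\bG(A)$, $\varphi(u(g_1,\dots,g_n))=u(\varphi(g_1),\dots,\varphi(g_n))$. This is immediate from the definition of $u(g_1,\dots,g_n)$ via the unique continuous homomorphism $\widehat{F}_\bG(A)\to G$ sending $a_i\mapsto g_i$, together with the fact that $\varphi$ composed with this homomorphism is the continuous homomorphism sending $a_i\mapsto\varphi(g_i)$ — both maps agree on the dense subset $A^+$, hence everywhere. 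With that in hand, everything else is formal, and the argument is genuinely no harder than the classical case of verbal subgroups defined by ordinary (finite) words; the point of the proposition is simply to record that the same description works with pseudoidentities, which is what makes $K_\bH(G)$ computable from a basis for $\bH$.
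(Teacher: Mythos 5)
Your proof is correct and follows essentially the same route as the paper's: both establish $N\subseteq K_\bH(G)$ by evaluating left-hand sides in $G/K_\bH(G)\in\bH$, note normality of $N$ via conjugation of values, and obtain $K_\bH(G)\subseteq N$ by checking that $G/N$ satisfies all pseudoidentities in $E$. Your explicit remark that pseudoword evaluation commutes with (continuous) homomorphisms is exactly the fact the paper uses implicitly, so the argument matches.
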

\begin{proof}
Let $N$ be the subgroup generated by all values of left hand sides of elements of $E$ in $G$.    If $u=1$ belongs to $E$ and $x=u(g_1,\ldots, g_n)\in G$ is a value of $u$, then $xK_{\bH}(G) = u(g_1K_{\bH}(G),\ldots, g_nK_{\bH}(G))=K_{\bH}(G)$ as $G/K_{\bH}(G)\in \bH$ and so $x\in K_{\bH}(G)$.  Thus $N\subseteq K_{\bH}(G)$.    Note that $N$ is a normal subgroup because if $u(g_1,\ldots, g_n)$ is a value of $u$ and $g\in G$, then $gu(g_1,\ldots, g_n)g^{-1} = u(gg_1g^{-1},\ldots, gg_ng^{-1})$.  We claim that $G/N\in \bH$.  Indeed, if $u=1$ belongs to $E$, then $u(g_1N,\ldots, g_nN)= u(g_1,\ldots, g_n)N=N$ and so $G/N$ satisfies all of the pseudoidentities of $E$ and hence belongs to $\bH$. Thus $K_{\bH}(G)\subseteq N$.
\end{proof}

Proposition~\ref{p:Hkernel.verbal} allows us to provide a more compact description of the $\bHbar$-pointlikes that can often be more practical for computations.  Also the description of $\overline{\mathbf G_{\pi}}$-pointlikes, for a recursive set of primes $\pi$, given in~\cite{HRS2010AP} can be recovered in this way.

\begin{proposition}\label{p:gens}
Let $T$ be a finite semigroup and $S$ a subsemigroup of $2^T$ closed downward in the order.  Let $K$ be a subgroup of $S$ generated by a subset $\mathcal X$.  Then $\bigcup K\in S$ if and only if $\bigcup \langle X\rangle\in S$ for all $X\in \mathcal X$.
\end{proposition}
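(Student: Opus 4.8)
The plan has two easy implications and one real step. The forward implication is immediate: for every $X\in\mathcal X$ the cyclic group $\langle X\rangle$ is a subgroup of $K$, so $\bigcup\langle X\rangle\subseteq\bigcup K$; hence if $\bigcup K\in S$ then $\bigcup\langle X\rangle\in S$, since $S$ is closed downward.

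For the converse, enumerate $\mathcal X=\{X_1,\dots,X_r\}$ (a finite set, as $2^T$ is finite), let $u$ be the identity of $K$, and set $Y_i:=\bigcup\langle X_i\rangle$, which lies in $S$ by hypothesis. Since $\langle X_i\rangle$ is a subgroup of $K$ it contains $u$ and $X_i^{-1}$, so, as subsets of $T$, $u\subseteq Y_i$ and $X_i,X_i^{-1}\subseteq Y_i$. Put $P:=Y_1Y_2\cdots Y_r$; then $P\in S$, and $P^M\in S$ for every $M\geq 1$, because $S$ is a subsemigroup of $2^T$. Using that $u$ is idempotent in $2^T$ and is a two-sided identity for $K$, together with monotonicity of the multiplication on $2^T$, one sees that $u\subseteq P$ and $X_i,X_i^{-1}\subseteq P$ for all $i$.

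The main step is a uniform length bound for elements of $K$. Since $\{X_1^{\pm 1},\dots,X_r^{\pm1}\}$ generates the finite group $K$, the corresponding Cayley graph is connected on $|K|$ vertices, so every $Z\in K$ is a product of at most $|K|-1$ elements of $\{X_1^{\pm 1},\dots,X_r^{\pm1}\}$. Fix $M:=|K|$. If $Z=u$, then $u=u^M\subseteq P^M$ because $u\subseteq P$; otherwise write $Z=g_1\cdots g_m$ with $g_\ell\in\{X_i^{\pm1}\}$ and $1\leq m\leq M-1$, and use the padding identity $Z=g_1\cdots g_m\,u^{M-m}$ (valid since $u$ is the identity of $K$) together with $g_\ell\subseteq P$, $u\subseteq P$ and monotonicity to get $Z\subseteq P^m\cdot P^{M-m}=P^M$. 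In either case $Z\subseteq P^M$, so $\bigcup K\subseteq P^M\in S$, and downward closure of $S$ gives $\bigcup K\in S$.

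The only genuine obstacle is producing a \emph{single} element of $S$ that dominates $\bigcup K$: one cannot just take $\bigcup_{m\geq 1}P^m$ since $S$ need not be closed under unions, so the Cayley-graph length bound, combined with the padding identity (which works precisely because $u\subseteq P$), is what lets us collapse everything into the one power $P^M$.
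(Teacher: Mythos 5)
Your proof is correct, and the forward direction is the same as the paper's. For the converse you start from the same object the paper does, namely the product $P=\bigcup\langle X_1\rangle\cdots\bigcup\langle X_r\rangle$ of the sets supplied by the hypothesis, but you finish differently. The paper notes that $\bigcup\colon 2^{2^T}\to 2^T$ is a semigroup homomorphism, so the idempotent power satisfies $P^{\omega}=\bigcup\bigl(\langle X_1\rangle\cdots\langle X_r\rangle\bigr)^{\omega}$; since an idempotent of $2^K$ is a subsemigroup of the finite group $K$, hence a subgroup, and it contains every $X_i$, that $\omega$-power is all of $K$, yielding the exact identity $\bigcup K=P^{\omega}\in S$ with no appeal to downward closure at this step. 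You instead establish the containment $\bigcup K\subseteq P^{|K|}$ by a Cayley-graph length bound over the generators $X_i^{\pm 1}$ together with padding by the identity $u$ (legitimate because $u,X_i,X_i^{-1}\subseteq P$ and multiplication in $2^T$ is monotone), and then invoke downward closure of $S$; all of these steps check out, including the use of $X_i^{-1},u\in\langle X_i\rangle$ so that they sit inside $Y_i$. The trade-off: your argument is more elementary and self-contained (no homomorphism observation, no ``idempotent subset of a finite group is a subgroup''), at the price of the crude exponent $|K|$ and the extra use of the downward-closure hypothesis, whereas the paper's argument produces $\bigcup K$ exactly as a single $\omega$-power of an element of $S$, a sharper formula that is in the spirit of the $\sigma$-term descriptions used later in the reducibility section.
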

\begin{proof}
Clearly, if $X\in \mathcal X$, then $\bigcup\langle X\rangle \subseteq \bigcup K$ and so $\bigcup K\in S$ implies $\bigcup \langle X\rangle\in S$.  Assume now that $\bigcup \langle X\rangle\in S$ for all $X\in \mathcal X$. Since $K$ is finite, we can find $X_1,\ldots, X_n\in \mathcal X$ that generate $K$.  Let \[Y=\left(\bigcup \langle X_1\rangle \cdots \bigcup \langle X_n\rangle\right)^{\omega} = \bigcup \big(\langle X_1\rangle \cdots \langle X_n\rangle\big)^{\omega}\] where the second equality uses that $\bigcup\colon 2^{2^T}\to 2^{T}$ is a semigroup homomorphism.  Then $(\langle X_1\rangle \cdots \langle X_n\rangle)^{\omega}$ is a subsemigroup, and hence a subgroup, of $K$ containing $X_1,\ldots, X_n$ and thus is $K$.  We conclude that $\bigcup K=Y\in S$.
\end{proof}

We now give our alternative description of the $\bHbar$-pointlike sets.  For finitely based varieties of finite groups, it can give a more compact and computationally useful description (depending on the nature of the pseudoidentities).

\begin{theorem}\label{t:Hpointlikes}
Let $\bH$ be a variety of finite groups given by a basis of group pseudoidentities $E$. For $u=1$ in $E$, let $A_u$ be the alphabet of $u$ and fix an idempotent $e_u$ in the minimal ideal of $\widehat{A_u^+}$.  Put $u'=u_{e_u}$ for convenience.  Let $T$ be a finite semigroup.  Then the following are equivalent for a downward closed subsemigroup  $S$ of $2^T$.
\begin{enumerate}
  \item $S$ is $\bHbar$-saturated.
  \item If $u=1$ in $E$ (with $n$ variables) and $X_1,\ldots, X_n\in S$ belong to some subgroup $G$ of $S$, then $\bigcup\langle u(X_1,\ldots, X_n)\rangle \in S$.
  \item If $u=1$ in $E$ (with $n$ variables) and $X_1,\ldots, X_n\in S$, then \[\bigcup\langle u'(X_1,\ldots, X_n)\rangle \in S.\]
\end{enumerate}
\end{theorem}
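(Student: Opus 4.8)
The plan is to establish the two equivalences $(1)\Leftrightarrow(2)$ and $(2)\Leftrightarrow(3)$ separately. Throughout I will use that, since $S$ is by hypothesis a downward closed subsemigroup of $2^T$, conditions (i) and (ii) of Definition~\ref{def:saturated} are automatic, so $(1)$ is equivalent to: $\bigcup K_\bH(G)\in S$ for every subgroup $G\le S$; and that $\bigcup\colon 2^{2^T}\to 2^T$ is an order-preserving semigroup homomorphism.

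For $(1)\Rightarrow(2)$: if $u=1$ is in $E$ and $X_1,\dots,X_n$ lie in a subgroup $G\le S$, then Proposition~\ref{p:Hkernel.verbal} places $u(X_1,\dots,X_n)$ in $K_\bH(G)$, so the cyclic group $\langle u(X_1,\dots,X_n)\rangle$ is contained in $K_\bH(G)$; applying $\bigcup$ gives $\bigcup\langle u(X_1,\dots,X_n)\rangle\subseteq\bigcup K_\bH(G)\in S$, and downward closure finishes. For $(2)\Rightarrow(1)$: given $G\le S$, Proposition~\ref{p:Hkernel.verbal} exhibits $K_\bH(G)$ as the subgroup generated by the set $\mathcal X$ of all values of left-hand sides of $E$ evaluated in $G$, and Proposition~\ref{p:gens} reduces $\bigcup K_\bH(G)\in S$ to checking $\bigcup\langle X\rangle\in S$ for each $X\in\mathcal X$; but each such $X$ has the form $u(g_1,\dots,g_n)$ with $u=1$ in $E$ and $g_1,\dots,g_n$ in the subgroup $G$, so $(2)$ applies.

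For $(2)\Leftrightarrow(3)$ I would fix $u=1$ in $E$ with alphabet $A=A_u$ and idempotent $e=e_u$, write $u'=u_e$, and for $X_1,\dots,X_n\in S$ introduce: the continuous homomorphism $\psi\colon\widehat{A^+}\to 2^T$ with $a_i\psi=X_i$ (its image being the finite semigroup $\langle X_1,\dots,X_n\rangle\subseteq S$); the idempotent $f:=e\psi\in S$ and the elements $Y_i:=fX_if\in S$; and the continuous homomorphism $\theta\colon\widehat{F}_{\mathbf G}(A)\to\widehat{A^+}$ with $a_i\theta=ea_ie$ and image $G(e)$, so that $u'=u\theta$. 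The crux is the identity
\[ u'(X_1,\dots,X_n)=u(Y_1,\dots,Y_n), \]
which I would obtain by observing that the composite $\theta\psi\colon\widehat{F}_{\mathbf G}(A)\to 2^T$ sends $a_i\mapsto Y_i$, that its image $H:=G(e)\psi$ is a finite group (the continuous image of a profinite group in a finite semigroup), hence a subgroup of $S$ with identity $f$ and generated by $Y_1,\dots,Y_n$ since $G(e)$ is topologically generated by the $ea_ie$, and that therefore $\theta\psi$ is precisely the homomorphism computing $u(Y_1,\dots,Y_n)$ in $H$. Granting this: $(2)\Rightarrow(3)$ is immediate, as $Y_1,\dots,Y_n$ lie in the subgroup $H\le S$ and $\bigcup\langle u(Y_1,\dots,Y_n)\rangle=\bigcup\langle u'(X_1,\dots,X_n)\rangle$; and for $(3)\Rightarrow(2)$, if $X_1,\dots,X_n$ already lie in a subgroup $G\le S$ then $f=e(X_1,\dots,X_n)$ is an idempotent of $G$ (as $\langle X_1,\dots,X_n\rangle\subseteq G$), hence equals the identity of $G$, so $Y_i=X_i$ and $u'(X_1,\dots,X_n)=u(X_1,\dots,X_n)$, whence $(3)$ yields $(2)$.

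I expect the only delicate point to be the displayed identity, i.e.\ checking that the two notions of evaluation — $u'\in\widehat{A^+}$ evaluated in the finite semigroup $2^T$, versus $u\in\widehat{F}_{\mathbf G}(A)$ evaluated in the finite group $H$ — literally agree. This is a diagram-chase with the universal properties of $\widehat{A^+}$ and $\widehat{F}_{\mathbf G}(A)$, together with the facts recalled above that $G(e)$ is topologically generated by $ea_1e,\dots,ea_ne$ and that a continuous homomorphism from a profinite group into a finite semigroup has for image a finite group whose identity is the image of the identity. Everything else — downward closure, monotonicity of $\bigcup$, uniqueness of the idempotent in a finite group — is routine.
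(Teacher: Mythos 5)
Your proposal is correct and follows essentially the same route as the paper: the equivalence of (1) and (2) via Propositions~\ref{p:Hkernel.verbal} and~\ref{p:gens}, and the equivalence of (2) and (3) via the identity $u'(X_1,\dots,X_n)=u(fX_1f,\dots,fX_nf)$ with $f=e_u(X_1,\dots,X_n)$, which the paper states using the maximal subgroup $G_f$ of $S$ and you verify via the image of $G(e_u)$ — the same argument, just spelled out. Your derivation of $(3)\Rightarrow(2)$ (the idempotent $f$ must be the identity of $G$, so $Y_i=X_i$) is merely a rephrasing of the paper's observation that $u'$ projects to $u$ under $\pi_{\mathbf G}$, so evaluations agree on subgroups.
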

\begin{proof}
By Proposition~\ref{p:Hkernel.verbal} the $\bH$-kernel of a subgroup $G$ of $S$ is generated by the values of left hand sides of elements of $E$. Since $S$ is downward closed, Proposition~\ref{p:gens} with $K=K_{\bH}(G)$ yields the equivalence of the first two items.  Since $u'$ maps to $u$ under the natural projection $\pi_{\mathbf G}\colon \widehat{A_u^+}\to \widehat{F}_{\mathbf G}(A_u)$, if $X_1,\ldots, X_n\in S$ belong to some subgroup $G$ of $S$ then $u(X_1,\ldots, X_n) = u'(X_1,\ldots, X_n)$. Thus the third item implies the second. Suppose that $X_1,\ldots, X_n\in S$ and put $e=e_u(X_1,\ldots, X_n)$.  Then $eX_1e,\ldots, eX_ne$ belong to the maximal subgroup $G_e$ of $S$ and $u'(X_1,\ldots,X_n) = u(eX_1e,\ldots, eX_ne)$.  Thus the second item implies the third.
\end{proof}

Notice that it follows from Theorem~\ref{t:Hpointlikes} and Theorem~\ref{thm:main} that $T\in \bHbar$ if and only if it satisfies all the pseudoidentities of the form $u'=(u')^2$, where we retain the notation of Theorem~\ref{t:Hpointlikes}, as was shown long ago in~\cite{AV03}.

For example, if $\bH$ is the trivial variety of groups, then we can take $E$ to consist of the identity $x=1$.  Then $e_x=x^{\omega}$ and so $x' = x^{\omega}xx^{\omega}= x^{\omega}x=:x^{\omega+1}$.  Thus the $\bHbar$-saturated subsemigroups of $2^T$ are the downclosed subsemigroups closed under $X\mapsto X^{\omega}\bigcup_{n\geq 1}X^n$, as was shown in~\cite{HRS2010AP}. If $\mathbf{Ab}$ denotes the variety of finite abelian groups, then we can take $E$ to  consist of the identity $xyx^{-1}y^{-1}=1$.  Thus a downclosed subsemigroup $S$ of $2^T$ is $\overline{\mathbf{Ab}}$-saturated if and only if whenever $X,Y$ belong to a subgroup of $S$, then $\bigcup_{n\geq 1} (XYX^{\omega -1}Y^{\omega-1})^n$ belongs to $S$ where, as usual, $x^{\omega-1}$ denotes the inverse of $x^{\omega+1}$ in the unique maximal subgroup of  $\widehat{\{x\}^+}$.  There is a single pseudoidentity $u=1$ in two variables that defines the variety $\mathbf G_{\mathrm{sol}}$ of finite solvable groups~\cite{BGGKPP06}.  Namely, let $u_1=x^{-2}y^{-1}x$ and $u_{n+1}= [xu_nx^{-1},yu_ny^{-1}]$, for $n\geq 0$. Then $u_n\to u$ with $u=1$ defining the variety of finite solvable groups.  Thus a downclosed subsemigroup $S$ of $2^T$ is $\overline{\mathbf G_{\mathrm{sol}}}$-saturated if and only if $\bigcup_{n\geq 1}u'(X,Y)^n$ belongs to $S$ for all $X,Y\in S$.  One can choose $e_u$ to be polynomial time computable~\cite{AV03}, so that $u'$ is polynomial time computable.  This will then give a faster approach to computing the $\overline{\mathbf G_{\mathrm{sol}}}$-pointlikes than working with $\mathbf G_{\mathrm{sol}}$-kernels.

The reader is referred to~\cite{Almeida:book,RS2009} for the notion of implicit operations and~\cite{AS2000} for the notion of implicit signatures.   Let $T$ be an $A$-generated finite semigroup.  Let $\gamma\colon \widehat{A^+}\to T$ be the canonical surjection and consider the relational morphism $\varphi_{\mathbf V}=\gamma^{-1} \pi_{\mathbf V}\colon T\to \widehat{F}_{\mathbf V}(A)$.  It is well known, cf.~\cite{RS2009}, that $X\subseteq T$ is $\mathbf V$-pointlike if and only if $X\subseteq \tau\varphi_{\mathbf V}^{-1}$ for some $\tau\in \widehat{F}_{\mathbf V}(A)$.  If $\sigma$ is an implicit signature containing multiplication, then a variety $\V$ of finite semigroups is \emph{weakly $\sigma$-reducible} for pointlikes if, for each finite $A$-generated semigroup $T$ and each $\V$-pointlike subset $X$ of $T$, there is a $\sigma$-term $\tau$ in $\widehat{F}_{\mathbf V}(A)$ with $X\subseteq \tau\varphi_{\mathbf V}^{-1}$.

\begin{lemma}\label{l:power.set.term}
Let $T$ be a finite semigroup and $S$ a profinite semigroup. Suppose that $\varphi\colon T\to S$ is a relational morphism whose graph $\#\varphi$ is closed in $T\times S$.   Let $u\in \widehat{A^+}$.  Then $u(s_1\varphi^{-1},\ldots, s_k\varphi^{-1})\subseteq u(s_1,\ldots s_k)\varphi^{-1}$ for any implicit operation $u$.
\end{lemma}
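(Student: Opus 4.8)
The plan is to prove the inclusion first for ordinary words and then to bootstrap to all implicit operations by a density/closedness argument, using the hypothesis that $\#\varphi$ is closed precisely at the limiting step.

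Concretely, I would set $A=\{a_1,\dots,a_k\}$, write $X_i:=s_i\varphi^{-1}\in 2^T$, and observe that the power semigroup $2^T$ is finite, hence a profinite semigroup, so that there is a unique continuous homomorphism $\beta\colon\widehat{A^+}\to 2^T$ with $a_i\beta=X_i$; similarly let $\gamma\colon\widehat{A^+}\to S$ be the continuous homomorphism with $a_i\gamma=s_i$. Then by definition $u(X_1,\dots,X_k)=u\beta$ and $u(s_1,\dots,s_k)=u\gamma$, so the goal becomes to show that $u\beta\subseteq(u\gamma)\varphi^{-1}$ for every $u\in\widehat{A^+}$, i.e.\ that the set $P:=\{u\in\widehat{A^+}\mid u\beta\subseteq(u\gamma)\varphi^{-1}\}$ is all of $\widehat{A^+}$.

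For the word case, if $u=a_{i_1}\cdots a_{i_m}\in A^+$ then $u\beta=X_{i_1}\cdots X_{i_m}$ in $2^T$, and any $t=t_1\cdots t_m$ lying in this product (with $t_j\in X_{i_j}$, i.e.\ $(t_j,s_{i_j})\in\#\varphi$) satisfies $(t,u\gamma)=\prod_j(t_j,s_{i_j})\in\#\varphi$ since $\#\varphi$ is a subsemigroup of $T\times S$; the case where some $X_{i_j}$ is empty is vacuous. Hence $A^+\subseteq P$. To see that $P$ is closed, I would fix $t\in T$ and note that $\{u\mid t\in u\beta\}$ is clopen in $\widehat{A^+}$, being the $\beta$-preimage of a clopen subset of the discrete finite space $2^T$, while $\{u\mid t\in(u\gamma)\varphi^{-1}\}=\{u\mid(t,u\gamma)\in\#\varphi\}$ is closed, being the preimage of the closed set $\#\varphi$ under the continuous map $u\mapsto(t,u\gamma)$; hence $\{u\mid t\in u\beta\Rightarrow t\in(u\gamma)\varphi^{-1}\}$ is closed, and $P$ is the intersection of these sets over the finitely many $t\in T$. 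Since $A^+$ is dense in $\widehat{A^+}$, it follows that $P=\widehat{A^+}$.

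I do not expect a serious obstacle. The only points needing care are the bookkeeping that $2^T$ is a profinite semigroup on which implicit operations evaluate continuously (so that $u\beta$ is meaningful and $u\mapsto u\beta$ is continuous, which in particular forces $u\beta=u_n\beta$ eventually when $u_n\to u$ in $A^+$) and the empty-set edge cases in the word step. The closedness of $\#\varphi$ enters exactly once: if $u_n\to u$ with $u_n\in A^+$ then $(t,u_n\gamma)\to(t,u\gamma)$, and closedness of $\#\varphi$ is what keeps the limit $(t,u\gamma)$ inside $\#\varphi$, so that the inclusion survives the passage from words to arbitrary implicit operations.
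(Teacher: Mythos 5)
Your proposal is correct and follows essentially the same route as the paper: establish the inclusion for words using that $\#\varphi$ is a subsemigroup of $T\times S$, then pass to arbitrary implicit operations via a limit argument in which the closedness of $\#\varphi$ does the work. The paper phrases the limit step with a sequence of words $w_n\to u$ stabilized using finiteness of $2^T$, whereas you phrase it as the set $P$ being closed and containing the dense subset $A^+$; these are the same argument in different packaging.
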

\begin{proof}
Write $u=\lim_{n\to \infty}w_n$ with the $w_n$ words.  By passing to a subsequence, we may assume that $w_n(s_1\varphi^{-1},\ldots, s_k\varphi^{-1})=u(s_1\varphi^{-1},\ldots, s_k\varphi^{-1})$ for all  $n\geq 1$ by finiteness of $2^T$.  Thus, for each $n\geq 1$, we have  $u(s_1\varphi^{-1},\ldots, s_k\varphi^{-1})=w_n(s_1\varphi^{-1},\ldots, s_k\varphi^{-1})\subseteq w_n(s_1,\ldots, s_k)\varphi^{-1}$ by definition of a relational morphism.  Therefore, if $t\in u(s_1\varphi^{-1},\ldots, s_k\varphi^{-1})$, then   $(t,w_n(s_1,\ldots, s_k))\in \#\varphi$ for each $n\geq 1$.  As $\#\varphi$ is closed, we may deduce that $(t,u(s_1,\ldots, s_k))\in \#\varphi$ and so $t\in u(s_1,\ldots, s_k)\varphi^{-1}$.  Thus $u(s_1\varphi^{-1},\ldots, s_k\varphi^{-1})\subseteq u(s_1,\ldots s_k)\varphi^{-1}$, as required.
\end{proof}

An implicit operation over $A$ is computable if there is a Turing machine that can compute its value given as input an $A$-tuple of elements of a finite semigroup $S$ (and the multiplication table of $S$).  We say that $\sigma$ is \emph{highly computable} if it consists of a recursively enumerable set of computable implicit operations.

The following generalizes one of the main results of~\cite{ACZ17}, but using weak reducibility instead of reducibility.  We shall use in the proof that if $u\in \widehat{A^+}$ with $|A|=n$, $T$ is a finite semigroup and $X_i\subseteq Y_i\subseteq T$, for $1\leq i\leq n$, then $u(X_1,\ldots, X_n)\subseteq u(Y_1,\ldots, Y_n)$ as can be seen by choosing a word agreeing with $u$ in the finite semigroup $2^T$.

\begin{theorem}\label{t:reducibility}
Let $\bH$ be a variety of finite groups defined by a set of group pseudoidentities $E$.  Let $\sigma$ consist of multiplication and the $u'$ with $u$ a left hand side of an element of $E$ and $u'$ defined as in Theorem~\ref{t:Hpointlikes}.  Then $\bHbar$-pointlikes are weakly $\sigma$-reducible.  If $E$ consists of a recursively enumerable basis of computable pseudoidentities, then $\sigma$ can be chosen to be highly computable.
\end{theorem}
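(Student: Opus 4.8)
The plan is to combine Theorem~\ref{thm:main} with the generation scheme for $\Sat_{\bHbar}(T\eta)$ provided by Theorem~\ref{t:Hpointlikes}, propagating a ``witness'' for pointlikeness up through the construction of the saturation. Fix the $A$-generated finite semigroup $T$, let $\gamma\colon\widehat{A^+}\onto T$ and $\pi\colon\widehat{A^+}\onto\widehat{F}_{\bHbar}(A)$ be the canonical projections, and set $\varphi:=\varphi_{\bHbar}=\gamma^{-1}\pi$; note that $\#\varphi=\{(w\gamma,w\pi)\mid w\in\widehat{A^+}\}$ is the continuous image of the compact space $\widehat{A^+}$, hence closed, so Lemma~\ref{l:power.set.term} applies to $\varphi$. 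Say that $\tau\in\widehat{F}_{\bHbar}(A)$ is a \emph{$\sigma$-witness} for $Y\in 2^T$ if $\tau$ is a $\sigma$-term in $\widehat{F}_{\bHbar}(A)$ and $Y\subseteq\tau\varphi^{-1}$, and let $\mathcal C\subseteq 2^T$ be the set of all $Y$ admitting a $\sigma$-witness. By Theorem~\ref{thm:main}, $\PL_{\bHbar}(T)=\Sat_{\bHbar}(T\eta)$, so it suffices to show $\Sat_{\bHbar}(T\eta)\subseteq\mathcal C$; and since $\Sat_{\bHbar}(T\eta)$ is the least $\bHbar$-saturated set containing $T\eta$, it is enough (using the implication $(3)\To(1)$ of Theorem~\ref{t:Hpointlikes}) to check that $\mathcal C$ is a downward-closed subsemigroup of $2^T$ containing $T\eta$ and closed under $(X_1,\dots,X_n)\mapsto\bigcup\langle u'(X_1,\dots,X_n)\rangle$ for every $u=1$ in $E$.

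Three of these properties are routine. A word in $A^+$ representing $t$ is a $\sigma$-term, and its image in $\widehat{F}_{\bHbar}(A)$ is a $\sigma$-witness for $\{t\}$, so $T\eta\subseteq\mathcal C$; if $\tau_i$ witnesses $Y_i$ then $\tau_1\tau_2$ witnesses $Y_1Y_2$, using the relational-morphism inclusion $\tau_1\varphi^{-1}\tau_2\varphi^{-1}\subseteq(\tau_1\tau_2)\varphi^{-1}$; and a witness for $Y'$ is one for every $Y\subseteq Y'$. The substantive case is the $E$-operation. Given $u=1$ in $E$ with $n$ variables and $X_1,\dots,X_n\in\mathcal C$ with witnesses $\tau_1,\dots,\tau_n$, I would take $\tau:=u'(\tau_1,\dots,\tau_n)\in\widehat{F}_{\bHbar}(A)$, which is again a $\sigma$-term. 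Monotonicity of implicit operations in the subset order gives $u'(X_1,\dots,X_n)\subseteq u'(\tau_1\varphi^{-1},\dots,\tau_n\varphi^{-1})$, and Lemma~\ref{l:power.set.term} gives $u'(\tau_1\varphi^{-1},\dots,\tau_n\varphi^{-1})\subseteq u'(\tau_1,\dots,\tau_n)\varphi^{-1}=\tau\varphi^{-1}$. The key point is that $\tau\varphi^{-1}$ is a \emph{subsemigroup} of $T$: this follows once $\tau$ is idempotent in $\widehat{F}_{\bHbar}(A)$, since then $\tau\varphi^{-1}\tau\varphi^{-1}\subseteq\tau^2\varphi^{-1}=\tau\varphi^{-1}$. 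Granting idempotency, $\bigcup\langle u'(X_1,\dots,X_n)\rangle$ is exactly the subsemigroup of $T$ generated by $u'(X_1,\dots,X_n)\subseteq\tau\varphi^{-1}$, hence contained in $\tau\varphi^{-1}$, so $\tau$ witnesses it and $\mathcal C$ is closed under the operation. This would establish weak $\sigma$-reducibility.

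To see that $\tau=u'(\tau_1,\dots,\tau_n)$ is idempotent, let $h\colon\widehat{A_u^+}\to\widehat{F}_{\bHbar}(A)$ be the continuous homomorphism sending the $i$-th generator of $\widehat{A_u^+}$ to $\tau_i$, and put $e:=h(e_u)$, an idempotent. The free profinite subgroup $G(e_u)$ topologically generated by the $e_ua_ie_u$ lies in the maximal subgroup at $e_u$, and $h$ carries it into the maximal subgroup $H_e$ of $\widehat{F}_{\bHbar}(A)$; this $H_e$ is a pro-$\bH$ group, because a maximal subgroup of an inverse limit $\varprojlim S_i$ of finite $\bHbar$-semigroups embeds, via the projection maps, as a closed subgroup of a product of maximal subgroups of the $S_i$, each of which lies in $\bH$. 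Since $u=1$ holds in every pro-$\bH$ group, we get $\tau=h(u_{e_u})=u(e\tau_1e,\dots,e\tau_ne)=e$, as required. I expect this idempotency — pinning down that the arguments land in a maximal subgroup and that the maximal subgroups of $\widehat{F}_{\bHbar}(A)$ are pro-$\bH$ — to be the main obstacle; once it is in place, everything else is bookkeeping with relational morphisms and Lemma~\ref{l:power.set.term}.

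Finally, for the computability clause I would note that $\sigma=\{\text{multiplication}\}\cup\{u'\mid u=1\in E\}$ is recursively enumerable whenever $E$ is, once the idempotent $e_u$ is chosen by a fixed rule depending only on $|A_u|$ (for instance a suitable $\omega$-term over $A_u$, as in~\cite{AV03}). Each such $u'$ is then a computable implicit operation: to evaluate it on an $|A_u|$-tuple $(s_1,\dots,s_n)$ in a finite semigroup $S$, one first computes $e=e_u(s_1,\dots,s_n)$ (possible since $e_u$ is computable) and then evaluates $u$ at $(es_1e,\dots,es_ne)$ (possible since $u$ is a computable pseudoidentity by hypothesis); composing these, $u'$ is computable. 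Hence $\sigma$ can be chosen highly computable.
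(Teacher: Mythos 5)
Your proposal is correct and follows essentially the same route as the paper: define the family of subsets of $T$ admitting a $\sigma$-witness, check it is a downward-closed subsemigroup of $2^T$ containing $T\eta$ and closed under the operation in Theorem~\ref{t:Hpointlikes}(3) via monotonicity, Lemma~\ref{l:power.set.term}, and the idempotency of $u'$ over $\bHbar$, then conclude from Theorems~\ref{thm:main} and~\ref{t:Hpointlikes}. The only deviations are cosmetic: where the paper simply cites \cite{AV03} for $\bHbar\models u'=(u')^2$ you reprove it (correctly, via the pro-$\bH$ maximal subgroup argument), and in the computability clause your parenthetical that $e_u$ could be taken to be an $\omega$-term is inaccurate when $|A_u|\geq 2$ — the actual point, as in the paper's appeal to \cite{RZ00,AV03}, is just that some computable (indeed polynomial-time computable) idempotent in the minimal ideal can be fixed once and for all.
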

\begin{proof}
Let $T$ be a finite $A$-generated subsemigroup and let $\varphi_{\bHbar}\colon T\to \widehat{F}_{\bHbar}(A)$ be the canonical relational morphism.  Let $S$ consist of all subsets $X$ of $T$ for which there is a $\sigma$-term $\tau\in \widehat{F}_{\bHbar}(A)$ with $X\subseteq \tau\varphi_{\bHbar}^{-1}$.  We claim that $S$ is an $\bHbar$-saturated subsemigroup of $2^T$ containing $T\eta$.  First of all, each singleton subset $\{t\}$ belongs to $S$ because we can take $\tau$ to be a word representing $t$. Clearly, $S$ is downclosed.   If $X,Y\in S$ with $X\subseteq \tau\varphi_{\bHbar}^{-1}$, $Y\subseteq \nu\varphi_{\bHbar}^{-1}$ with $\tau,\nu$ $\sigma$-terms, then $XY\subseteq \tau\nu\varphi_{\bHbar}^{-1}$.  So $S$ is a subsemigroup.  Suppose that $u=1$ belongs to $E$ with $n$ variables and $X_1,\ldots, X_n\in S$ with $X_i\subseteq \tau_i\varphi_{\bHbar}^{-1}$ with $\tau_i$ a $\sigma$-term.  Then $u'(\tau_1,\ldots, \tau_n)$ is a $\sigma$-term and $u'(X_1,\ldots, X_n)\subseteq u'(\tau_1,\ldots, \tau_n)\varphi_{\bHbar}^{-1}$ by Lemma~\ref{l:power.set.term}.   But $\bHbar\models u'=(u')^2$, as was observed by Almeida and Volkov~\cite{AV03}. Therefore, $u'(X_1,\ldots, X_n)^k\subseteq u'(\tau_1,\ldots,\tau_n)\varphi_{\bHbar}^{-1}$ for all $k\geq 1$ and so $\bigcup \langle u'(X_1,\ldots, X_n)\rangle \subseteq u'(\tau_1,\ldots,\tau_n)\varphi_{\bHbar}^{-1}$.   Therefore, $S$ is $\bHbar$-saturated by Theorem~\ref{t:Hpointlikes}.  We conclude that $S$ contains every $\bHbar$-pointlike set and so $\bHbar$ is weakly $\sigma$-reducible.

The final statement follows because there is an algorithm, which given a finite set $A$, produces a polynomial time computable idempotent $e_A$ is the minimal ideal of $\widehat{A^+}$~\cite{RZ00,AV03}.  Hence if $E$ is recursively enumerable and consists of computable implicit operations, then $\sigma$ can be taken to be highly computable.
\end{proof}

Examples where Theorem~\ref{t:reducibility} applies to obtain a highly computable implicit signature include $\bH$ any of the trivial variety, finite abelian groups, finite $p$-groups ($p$ prime), finite $\pi$-groups (with $\pi$ a recursive set of primes), finite nilpotent groups and finite solvable groups.  Each of these varieties can be defined by a single computable pseudoidentity in one or two variables (computable in polynomial time except for in the case of $\pi$-groups, where the complexity depends on the membership algorithm for $\pi$).

\def\cprime{$'$} \def\cprime{$'$}
\providecommand{\bysame}{\leavevmode\hbox to3em{\hrulefill}\thinspace}
\providecommand{\MR}{\relax\ifhmode\unskip\space\fi MR }
% \MRhref is called by the amsart/book/proc definition of \MR.
\providecommand{\MRhref}[2]{%
  \href{http://www.ams.org/mathscinet-getitem?mr=#1}{#2}
}
\providecommand{\href}[2]{#2}

%\bibliographystyle{amsplain}
%\bibliography{goolsteinberg2017}
\end{document}